\title{2-cabling and tangle operators in Khovanov theory}
\author{Mihai Marian}
\begin{document}
\maketitle

\begin{abstract}
We describe an operator on 4-ended tangles that is induced by 2-cabling of a strongly invertible knot. By passing to the 4-ended tangle Khovanov theory of Kotelskiy--Watson--Zibrowius, this induces an operator on the category of type D structures over the Bar-Natan algebra $\mc{B}$, as well as on a Fukaya category of the 4-punctured 2-sphere. We provide a full description of this operator's restriction to cap-trivial tangles. Finally, we extract geography results that are inspired by a recent concordance invariant of Lewark--Zibrowius.
\end{abstract}

\section{Introduction}

Given a knot invariant, it is natural to ask how the invariant behaves under cabling. For the Alexander polynomial $\Delta$, this is well-known (see, e.g., \cite[Theorem 6.15]{Lic97}):
\[\Delta_{K_{p,q}}(t) = \Delta_{T(p,q)}(t) \cdot \Delta_K(t^p),\]
where $K_{p,q}$ is the $(p,q)$-cable of a knot $K$ and $T(p,q)$ is the $(p,q)$ torus knot. This formula led Hedden to analyse the knot Floer homology of cables \cite{Hed05,Hed09}, starting with consideration of $(2,n)$-cables. The culmination\footnote{Until such a theorem is proved for the other versions of Heegaard Floer theory.} of this line of work is the description of $(p,q)$-cabling as an operator on the bordered Heegaard Floer homology of knot complements \cite{HanWat23b}:
\[\wh{\HF}(K^c) \xmapsto{\mathscr{F}_{p,q}} \wh{\HF}(K_{p,q}^c),\]
where the complement $K^c$ has boundary parametrized by the meridian and Seifert longitude. The invariant $\wh{\HF}(K^c)$ is an immersed curve (with local system) in the punctured torus $T^2_*$ and it is equivalent to the bordered type D structure $\wh{\CFD}(K^c)$ \cite{HRW23}. The operator $\mathscr{F}_{p,q}$ may be described as a Lagrangian correspondence from $T^2_*$ to itself that is particularly simple: it is given by the graph of a $p$-valued function $T^2_* \ra T^2_*$. We illustrate it for $K = T(2,3)$ and $(p,q) = (2,1)$ in \cref{fig:cabTrefHF}. See \cite{HanWat23b} for other beautiful figures.

\begin{figure}[h]
\labellist
\hair 2pt
\pinlabel $\mathscr{F}_{2,1}$ at 360 175
\endlabellist
\centering
\includegraphics[width=0.8\textwidth]{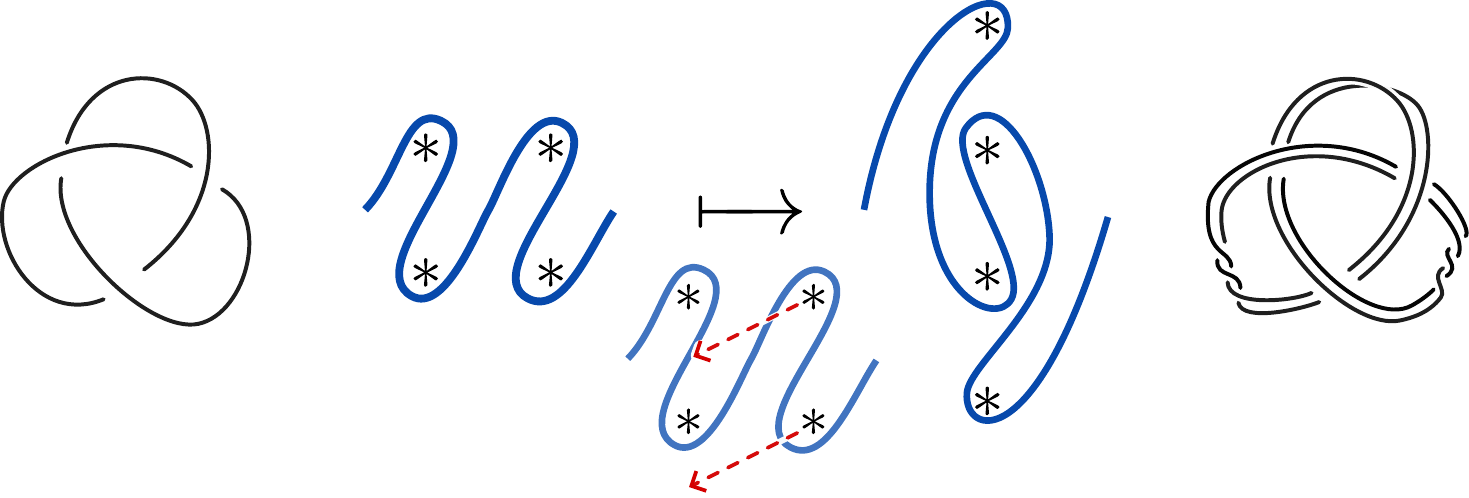}
\caption{The immersed curve invariants $\wh{\HF}(K^c)$ and $\wh{\HF}(K_{2,1}^c)$, lifted to the universal Abelian cover $\R^2 \setminus \Z^2 \ra T^2_*$, where $K$ is the Seifert-framed right-handed trefoil. In the cover, the operator $\mathscr{F}_{p,q}$ is given by a collection of finger moves that slide marked lattice points along lines of slope $q/p$, as indicated by the red dotted arrows.}
\label{fig:cabTrefHF}	
\end{figure}

\begin{wrapfigure}[15]{R}{0.22\textwidth}
\includegraphics[width=0.2\textwidth]{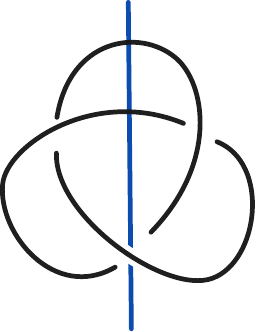}
\caption{The trefoil with the axis of a strong inversion.}
\label{fig:trefSI}
\end{wrapfigure}
In this work, we consider Khovanov theory, but our goal is not the obvious one of describing the Khovanov homology of a cable, although this is also interesting. Indeed, the Jones polynomial of cables is itself not well understood and formulas for it would be instrumental in proving instances of the volume conjecture, see \cite{McPS}. Instead, given the spectral sequence from the Khovanov homology of a link to the Heegaard Floer homology of its branched double-cover \cite{OS05} and what is known about the knot Floer homology of cables, it is natural to consider strongly invertible knots. Before presenting our main results, we define strongly invertible knots, explain how to construct a Khovanov-type invariant for them, and how $(2,n)$-cabling acts on this invariant.

A knot $K$ in $S^3$ is (smoothly) strongly invertible if there is a smooth, orientation preserving involution $h \co S^3 \ra S^3$ that fixes $K$ and reverses a choice of orientation on $K$. For example, rotation by $\pi$ about the axis in \cref{fig:trefSI}. In fact, we may assume without loss of generality that a given strong inversion really is rotation by $\pi$ about an axis that intersects $K$ in two points \cite{Wal69}; see also \cite{BRW23}. Given a strongly invertible knot $(K,h)$, there is a standard way to construct a 4-ended tangle $T_h$ that is an invariant of $(K,h)$: under the quotient map $S^3 \ra S^3\!/(x \sim h(x))$, the image of the pair $(K^c, \mathrm{Fix}(h) \cap K^c)$ is a 4-ended tangle $(B^3, T_h)$ (by the smooth Sch\"onflies theorem). Thus $K^c$ is the 2-cover of $B^3$, branched at $T_h$, a space that is denoted $\Sigma(B^3, T_h)$: 
\begin{notn} The branched 2-cover of a manifold $M$ with branch set $B$ is denoted $\Sigma(M, B)$. If $M = S^3$ and $B$ is a knot $K$, then it is denoted $\Sigma(K)$.
\end{notn}
See \cite{Sak85} for an early study of strongly invertible knots via the image of $\mathrm{Fix}(h)$ in the quotient (in the PL setting). The construction of $T_h$ is depicted in \cref{fig:SITangle} for the trefoil. Therein we also keep track of the Seifert longitude, in order to be able to use the so-called ``Montesinos trick", one incarnation of which is the following observation: by closing the tangle $T_h$ with the $r$-rational tangle (defined in \ref{def:cutPaste} for $r \in \Z$, which is the only case we consider in this article) and taking the branched cover over the resulting link $\Sigma(T_h(r))$, one obtains $S^3_r(K)$, the $r$-framed Dehn surgery on $K$ (if $T_h$ is correctly parametrized) \cite{Mon75}. Note that, strictly speaking, $T_h$ is associated with the strongly invertible knot \textit{together with a choice of longitude}. Absent this choice, it is a family of tangles $T_h$ that is associated with a strongly invertible knot, where two tangles in the family differ by the number of crossings on the right-hand side (according to our diagram conventions). We call this number of crossings the framing of the tangle $T_h$, and the Seifert framing is the one for which the Montesinos trick works as advertised.

\begin{figure}[h]
\labellist
\hair 2pt
\pinlabel $\color{rgb:red,14;green,21;blue,10}\lambda$ at 199 219
\pinlabel $\color{rgb:red,14;green,21;blue,10}\conj{\lambda}$ at 120 84
\endlabellist
	\centering
	\includegraphics[width=0.9\textwidth]{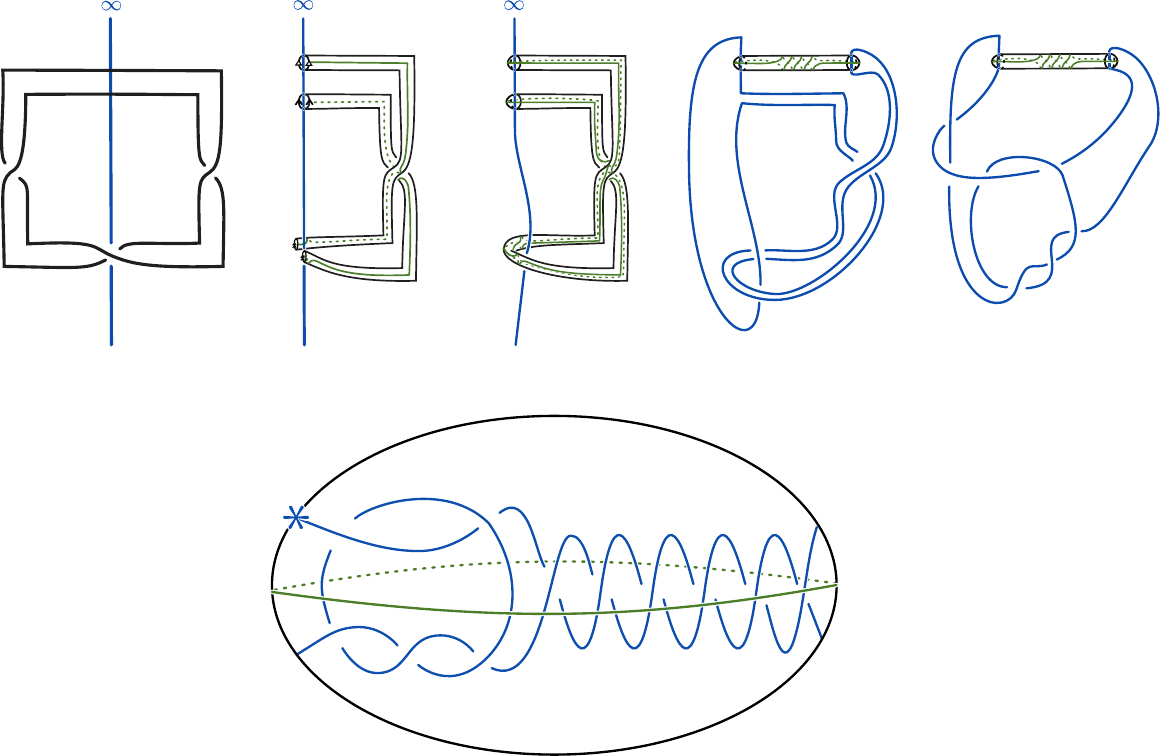}
	\caption{The tangle $T_h$ associated to the strong inversion on the right-handed trefoil. The Seifert longitude $\lambda$ in the fundamental domain of the quotient and its image $\conj{\lambda}$ are shown in green.}
	\label{fig:SITangle}
\end{figure}

Here is what is meant by the closure of a tangle:

\begin{defn}\label{def:cutPaste} Let $n \in \Z \cup \{\infty\}$. First, the rational $n$-tangle $Q_n$ is the one in \cref{fig:nTangle} for $n>0$. If $n<0$, then $Q_n = mQ_{-n}$, where $m$ denotes the mirror. And if $n = 0, \infty$, we set $Q_0 = \oRes$ and $Q_\infty = \iRes$. Second, given two 4-ended tangles $T_1$ and $T_2$, the link $\mc{L}(T_1, T_2)$ is obtained by identifying endpoints as in \cref{fig:glue} below. Finally, let the $n$-closure $T(n)$ of a 4-ended tangle $T$ be $\mc{L}(T, Q_{-n})$.
\end{defn}

\begin{figure}[ht]
	\centering
	\begin{minipage}{.5\textwidth}
		\labellist
		\hair 2pt
		\pinlabel $n$ at 33 10
		\endlabellist
		\centering
		\includegraphics[height = 0.1\textheight]{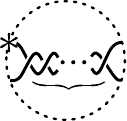}
		\caption{The tangle $Q_r$} \label{fig:nTangle}
	\end{minipage}%
	\begin{minipage}{.5\textwidth}
		\centering
		\includegraphics[height=0.1\textheight]{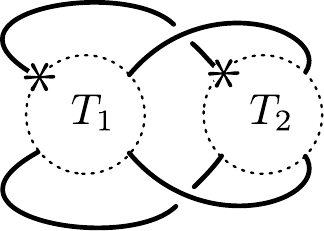}
		\caption{The link $\mc{L}(T_1, T_2)$.}	
		\label{fig:glue}
	\end{minipage}
\end{figure}

We will mostly be working with cap-trivial tangles:

\begin{notn} We call a tangle $T$ cap-trivial if $T(\infty)$ is an unknot. 
\end{notn}

By the following remark, we use the terms ``cap-trivial tangle" and ``tangle associated with a strong inversion" interchangeably.

\begin{rmk} Cap-trivial tangles are in natural bijection with tangles associated with strongly invertible knots. 
\end{rmk}
\begin{proof} If $T$ is cap-trivial, then $\Sigma(T(0)) = S^3$ and $\Sigma(B^3, T) \subset S^3$ is a knot complement with an involution given by the nontrivial deck transformation. This involution induces a strong inversion on the knot core. Conversely, it is clear that tangles associated with strong inversions are cap-trivial.
\end{proof}

\begin{rmk} A word of warning: in small examples, such as the one in \cref{fig:SITangle}, it seems alluringly straightforward to diagrammatically follow the Seifert longitude through the construction in order to obtain the Seifert-framed tangle $T_h$. This procedure is treacherous in general. It is more secure to verify the framing by computing determinants: by the Montesinos trick, the branched double cover $\Sigma(T_h(0))$ is 0-surgery on $K$ (if $T_h$ is Seifert-framed), so the link determinant must satisfy $\det(T_h(0)) = 0$. A statement that uses only determinants of knots is: $T_h$ is Seifert-framed if and only if $\det(T_h(\pm1)) = 1$. Moreover, the determinant may be computed using either the Alexander polynomial $\Delta$ or the Jones polynomial $V$:
\[\det(K) = |\Delta_K(-1)| = |V_K(-1)| = \left| \chi_{gr}\left( \wt{\Kh}(K) \right) \big|_{t=-1}\right|,\]
where $\chi_{gr}$ is the graded Euler characteristic. For example, $T_h(1) = m10_{124}$ for the tangle in \cref{fig:SITangle}. The Alexander polynomial of $10_{124}$ is listed in \cite{Rol03} as
\[1 -(t+t^{-1}) + (t^3+t^{-3}) - (t^4 + t^{-4}),\]
so we see that $\Delta_{T_h(1)}(-1) = 1$.
\end{rmk}

The Khovanov-type invariant of $(K,h)$ that we consider is the immersed curve invariant of the 4-ended tangle $T_h$, defined in \cite{KWZ19}. It is an object
\[\wt{\BN}(T_h) \in \wrapFuk(S^2_{4,*}),\]
where $\wrapFuk(S^2_{4,*})$ is a (partially wrapped) Fukaya category of the 4-punctured sphere, one of the punctures being distinguished. We will describe this invariant in more detail below. For now we note only that $\wt{\BN}(T_h)$ is a (possibly disconnected) immersed curve in $S^2_{4,*}$, with some additional decorations. In particular, these curves are bigraded in an appropriate sense. However, for this introduction, we can ignore the additional data and think of $\wt{\BN}(T_h)$ as an immersed curve in $S^2_{4,*}$ that is, up to regular homotopy, an invariant of the isotopy class of $T_h$. 

Now, given a strongly invertible knot $(K,h)$, there is a uniquely induced strong inversion $h_{p,q}$ on the cable $K_{p,q}$, so there is a well-defined tangle operator 
\[\Cb^0 \co \cat{Tan}(4) \ra \cat{Tan}(4)\]
that takes the 4-ended tangle $T_h$ into the 4-ended tangle $T_{h_{2,1}}$. To see the operator, consider the involution on the $(2,1)$ pattern knot in the solid torus $D^2 \times S^1$ depicted in \cref{fig:21PatternSI}. Remove a tubular neighbourhood of the pattern knot and the result is a manifold with boundary consisting of two tori: inner and outer, drawn in green and red, respectively. Given a strongly invertible knot $(K,h)$, we construct $(K_{2,1}^c, h_{2,1})$ by identifying the outer boundary of $D^2 \times S^1$ with $\del K^c$ and by extending $h$ through the solid torus using the indicated involution. \Cref{fig:21PatternSI} therefore constructs the tangle operator $\Cb^0$: the image $\Cb^0(T)$ is simply the tangle obtained by placing $T$ inside the red inner sphere.

\begin{figure}[h]
	\centering
	\includegraphics[width=0.9\textwidth]{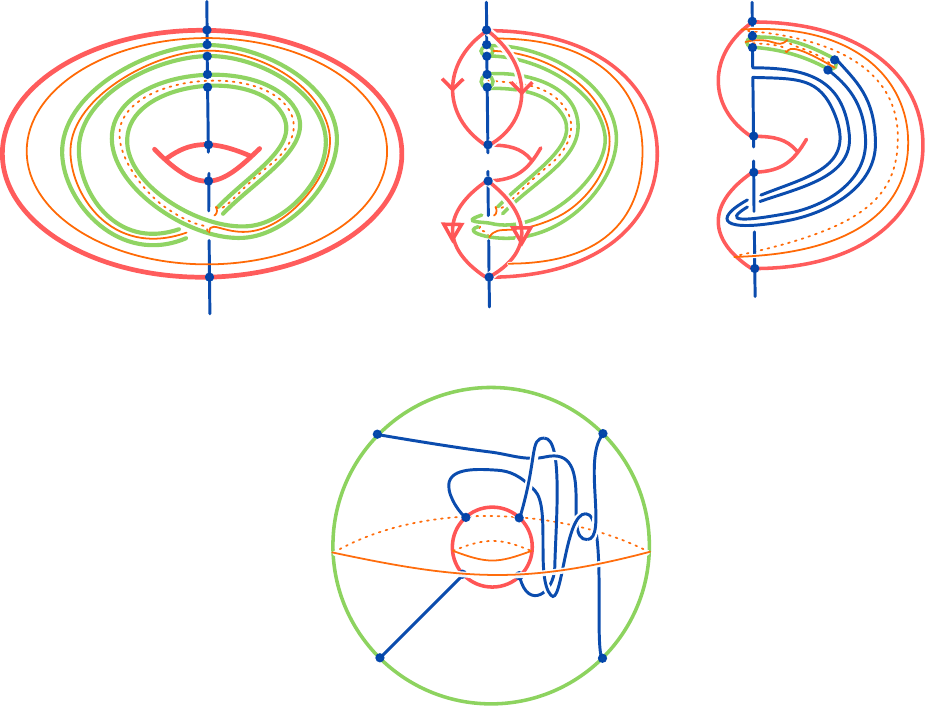}
	\caption{The strong inversion on the $(2,1)$ pattern and the induced tangle operator $\Cb^0$. The orange arcs are images of Seifert longitudes.}
	\label{fig:21PatternSI}
\end{figure}

Thus there is an induced operator on the image of $\wt{\BN}$ inside $\wrapFuk(S^2_{4,*})$:
\[\Cb^0 \co \left\{\wt{\BN}(T) \in \wrapFuk(S^2_{4,*}) \right\} \ra \left\{ \wt{\BN}(T) \in \wrapFuk(S^2_{4,*}) \right\},\]
given by
\[\Cb^0 \big( \wt{\BN}(T_h) \big) := \wt{\BN}\big( \Cb^0(T_h) \big).\]
We allow ourselves the repeated use of the symbol $\Cb$, since it should be clear from context whether we are talking about an operator acting on $\cat{Tan}(4)$ or one acting on the Fukaya category $\wrapFuk(S^2_{4,*})$. We will see below that our construction in fact yields an operator on the whole category $\wrapFuk(S^2_{4,*})$. However, we restrict our attention to Bar-Natan invariants of cap-trivial tangles. Our main result is a full description of $\Cb^0\big(\wt{\BN}(T_h)\big)$ in terms of $\wt{\BN}(T_h)$, which has the following restrictive geography result as an immediate corollary:

\begin{thm}\label{thm:geography} Given a cap-trivial tangle $T$, the unique non-compact component of the immersed curve $\Cb^0(\wt{\BN}(T))$ is, up to mirroring and framing, one of
\[\wt{\BN}(\vcenter{\hbox{\protect\includegraphics[scale=0.12]{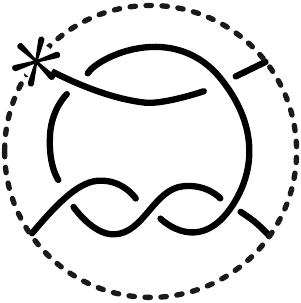}}}) \qquad \text{or} \qquad \wt{\BN}(\oRes).\]
\end{thm}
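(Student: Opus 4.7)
The plan is to derive this as an immediate corollary of the paper's main structural theorem---a full description of $\Cb^0\bigl(\wt{\BN}(T)\bigr)$ in terms of $\wt{\BN}(T)$. First, I would verify that $\Cb^0(T)$ is again cap-trivial (the strong inversion extends through the $(2,1)$-cable solid torus by construction of $\Cb^0$), so that the ``unique non-compact component'' in the statement is well-defined. Let me denote this component of $\Cb^0\bigl(\wt{\BN}(T)\bigr) = \wt{\BN}\bigl(\Cb^0(T)\bigr)$ by $\gamma'$, and the analogous component of $\wt{\BN}(T)$ by $\gamma$.

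The next step is to apply the structural description to isolate the contribution to $\gamma'$. The operator $\Cb^0$ is implemented on tangles by plugging $T$ into a fixed pattern tangle---the annular piece of \cref{fig:21PatternSI}---so the corresponding Fukaya-theoretic operator should amount to gluing a fixed pattern-shaped piece onto $\wt{\BN}(T)$. Under such a gluing, a non-compact component of the output can only arise from a non-compact component of the input, and its homotopy class is determined by the local behaviour of $\gamma$ near the four punctures (where the pattern acts), not by the bulk of $\gamma$ nor by the compact components of $\wt{\BN}(T)$.

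Finally, I would enumerate the possible local configurations. Mirroring of $(K,h)$ and integer reframing of $T$ together generate precisely the two degrees of freedom allowed in the statement, the latter acting on $\gamma$ by powers of a Dehn twist about the distinguished puncture. Cap-triviality of $T$ imposes a constraint on how $\gamma$ can enter and exit the four punctures, and a short case check shows that modulo these symmetries only two local configurations of the germ of $\gamma$ can appear. Tracing each through the fixed $(2,1)$-pattern yields, respectively, the non-compact component of $\wt{\BN}(\oRes)$ in the ``untwisted'' case and the non-compact component of $\wt{\BN}$ of the minimum-crossing right-handed trefoil tangle in the ``twisted'' case.

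The main obstacle I expect is the second step: making precise, using the structural theorem, that the non-compact output is controlled purely by local data of $\gamma$ at the punctures, and in particular that the compact components of $\wt{\BN}(T)$ cannot contribute anything non-compact after passing through the cable pattern. Once that is in place, the finite enumeration in the third step reduces to a direct diagrammatic computation against the known model curves for $\oRes$ and the trefoil tangle from \cite{KWZ19}.
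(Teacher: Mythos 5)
The high-level strategy is right—the geography statement does follow as a corollary of the paper's structural description of $\Cb^0$ restricted to cap-trivial tangles—but the key intermediate step you flag as the ``main obstacle'' is precisely where the proof lives, and the way you propose to close it does not work.

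Your second step claims that the homotopy class of the non-compact output $\gamma'$ is ``determined by the local behaviour of $\gamma$ near the four punctures\dots not by the bulk of $\gamma$.'' This is false as stated and is in tension with the paper's own second theorem, which says $\Cb^0$ is \emph{not} induced by the graph of a $p$-valued continuous function of $S^2_{4,*}$---i.e.\ it genuinely fails to be local in the sense your argument assumes. The correct reduction in the paper is not to puncture-germs but to ``pulling tight at $D_\bullet$'' (\cref{thm:curveFactorization}, equivalently \cref{thm:factorization}), which is a global simplification of the curve-like type D structure obtained by setting the $D_\bullet$-labelled differentials to zero. That factorization is established by the Elbow Lemma (\cref{lem:elbow}), a substantial case-check, not by a general locality principle. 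After pulling tight, one then uses \cref{lem:rdleaf} and \cref{lem:capTrivialConstraints} (the cap-trivial forbidden-configuration constraints) to see that $\wt{\BN}(T)|_{D_\bullet=0}$ decomposes as arcs in the special column together with a single $\wt{\BN}(Q_n)$; the Homogeneous Chain Lemma (\cref{lem:homogChain}) shows the arcs produce only compact output; and the explicit model computation of $\Cb$ on the unknot cables (\cref{subsec:unknot}) gives the two possible non-compact outputs. Your ``short case check'' of puncture-germs, without the $D_\bullet$-pull-tight reduction, would have to confront the whole winding of the non-compact component of $\wt{\BN}(T)$, not just its ends, so the enumeration is not actually finite at that stage.

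So: the proposal is an outline of the right corollary-from-structure strategy, but the mechanism you offer for isolating the non-compact contribution (locality at punctures) is both unproven and in fact incorrect; it would need to be replaced by the Elbow/Homogeneous Chain machinery together with the cap-trivial curve constraints, which constitute the technical content of the paper.
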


It is also natural to compare $\Cb$ with $\mathscr{F}_{2,1}$. Such operators on Fukaya categories are expected to arise as (generalized) Lagrangians in $X \times X$, where $X$ is either $T^2_*$ or $S^2_{4,*}$, with the action of the operator given as the composite $\pi_2 \circ \pi_1^{-1}$ of the projections $X \times X \ra X$. From the symplectic perspective, our operator seems more exotic than $\mc{F}_{p,q}$:

\begin{thm} The operator $\Cb^0 \co \wrapFuk(S^2_{4,*}) \ra \wrapFuk(S^2_{4,*})$ is not induced from a Lagrangian correspondence that is the graph of a $p$-valued continuous function of $S^2_{4, *}$, for any value of $p \in \N$.
\end{thm}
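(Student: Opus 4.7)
The plan is to assume for contradiction that $\Cb^0$ is induced by a Lagrangian correspondence which is the graph of a $p$-valued continuous function $f \co S^2_{4,*} \ra S^2_{4,*}$ for some $p \in \N$. Under this assumption, the action on any immersed curve $\gamma$ is the set-theoretic image $f(\gamma) = \bigcup_{i=1}^{p} f_i(\gamma)$, where $f_1, \ldots, f_p$ are the continuous branches of $f$. In particular, the number of connected components of $\Cb^0(\gamma)$ is at most $p$ times the number of components of $\gamma$.

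My first step would be to invoke the paper's main structural description of $\Cb^0$ (of which Theorem~\ref{thm:geography} is a corollary) to construct a family of cap-trivial tangles $\{T_n\}_{n \geq 1}$ for which $\wt{\BN}(T_n)$ has a uniformly bounded number of components, but for which $\Cb^0(\wt{\BN}(T_n))$ has unboundedly many components as $n$ grows. Concretely, I would start from a basic cap-trivial tangle whose Bar-Natan invariant consists of a single non-compact arc, and successively insert twists that leave this component count unchanged (modifying only the shape of the arc), while the cabling operation produces additional compact components in the output, in number growing linearly (or faster) with the twist parameter.

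Given such a family, the contradiction is immediate: for any fixed $p$, taking $n$ sufficiently large forces a violation of the bound $\#\mathrm{components}(\Cb^0(\wt{\BN}(T_n))) \leq p \cdot \#\mathrm{components}(\wt{\BN}(T_n))$ that is required of any $p$-valued continuous function.

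The main obstacle will be the explicit verification of unbounded growth in the component count of the cabled output. This demands the full main theorem's structural formula for $\Cb^0(\wt{\BN}(T))$, beyond the single-non-compact-component constraint supplied by Theorem~\ref{thm:geography}. If direct component counting proves elusive, a backup approach is homological: Theorem~\ref{thm:geography} confines the non-compact component of every output to a low-rank subgroup of $H_1(S^2_{4,*};\Z) \cong \Z^3$, parameterized essentially by framing and mirroring, while the non-compact components of $\wt{\BN}(T)$ as $T$ varies over cap-trivial tangles can be arranged to span a higher-rank subgroup, by inserting twists around each of the three non-distinguished punctures. Since a $p$-valued continuous function induces a well-defined linear homomorphism $f_* = \sum_i (f_i)_*$ on $H_1$, this rank discrepancy would produce the desired contradiction.
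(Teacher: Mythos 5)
Your primary strategy is the natural one and it works. The paper itself states this theorem in the introduction without an explicit proof block, but the discussion that follows it (and, most importantly, the explicit model computation in \S\ref{subsec:unknot}) supplies exactly what your argument needs. Specifically: take $T_n = \tau^n(\oResOr)$, the $n$-framed trivial arc. Then $\wt{\BN}(T_n)$ is a single non-compact arc, i.e.\ has exactly one component, independent of $n$. The computation of the cables of the unknot shows that $\Cb(\Rd(T_n))$ decomposes as approximately $n/2$ copies of the compact curve $C$ from \eqref{eq:2Compact} plus one non-compact component, so the output has roughly $n/2 + 1$ components. Since $\Cb^0$ and $\Cb$ differ only by a power of the (single-valued) Dehn twist $\tau$, which cannot change component counts, the same linear growth holds for $\Cb^0$. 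For any fixed $p$, choosing $n > 2p$ then violates the bound $\#\mathrm{components}(\Cb^0(\gamma)) \leq p \cdot \#\mathrm{components}(\gamma)$ that holds for the graph of any $p$-valued continuous map, because the preimage of a connected curve under a $p$-to-1 projection has at most $p$ connected components, and a continuous projection cannot increase that count. So the obstacle you flag---explicit verification of unbounded component growth---is in fact already resolved by a model computation in the paper; you do not need the full structural theorem, only the homogeneous-chain lemma applied to arcs of slope $0$.

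Your backup homological approach, by contrast, does not hold up. The non-compact components of $\wt{\BN}(T)$ have endpoints at punctures, so they are not closed $1$-cycles and do not define classes in $H_1(S^2_{4,*};\Z)$; you would need a relative or Borel--Moore variant, and the bookkeeping changes. More seriously, the branches $f_1,\dots,f_p$ of a $p$-valued function are generally not globally defined (they permute under monodromy around branch points), so the map $f_* := \sum_i (f_i)_*$ is not well-defined on $H_1$. Finally, \cref{thm:geography} says the non-compact output component is $\wt{\BN}(\text{trefoil tangle})$ or $\wt{\BN}(\oRes)$ only \emph{up to framing}, and framing is unbounded (powers of $\tau$), so the image is not confined to a low-rank subgroup in any useful sense. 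The primary argument is both correct and simpler; I would drop the backup.
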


For example, let $T_{3_1}$ denote the tangle associated with the unique (see, e.g., \cite[\S10.6]{Kaw95}) strong inversion on the right-handed trefoil $3_1$. \Cref{fig:example1} illustrates $\wt{\BN}(T_{3_1})$ and its image under $\Cb^0$. The figure shows lifts of the immersed curves to the covering space 
\[ \R^2\setminus ({\scalebox{1}{$\frac{1}{2}$}} \Z)^2 \xra{\alpha} T^2_{4,*}  \xra{\beta} S^2_{4, *},\]
where $\alpha$ is the usual hyperelliptic projection and $\beta$ is the universal Abelian cover.

\begin{figure}[h]
\labellist
\hair 2pt
\pinlabel $\Cb$ [t] at 307 310
\endlabellist
\centering
\includegraphics[width=0.8\textwidth]{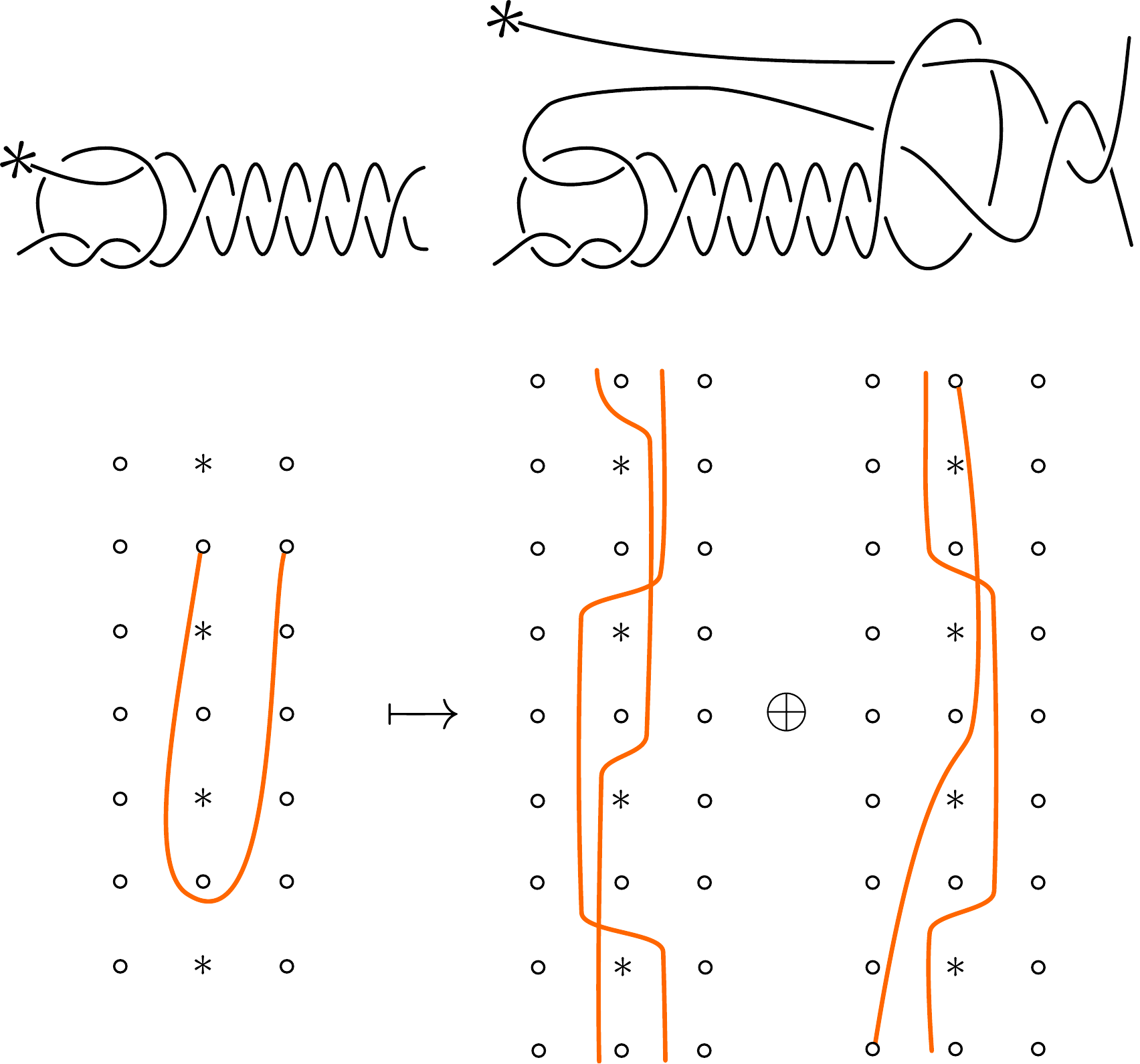}
\caption{The operator $\Cb^0$ acting on the curve $\wt{\BN}(T_{3_1})$ assigned to the Seifert-framed right-handed trefoil. The curve on the right is split into two summands, which correspond to a decomposition of the curve on the left as in \cref{fig:cabTrefoilFactorization}. Finally, the two curves in the left-hand summand of $\Cb^0(\wt{\BN}(T_{3_1}))$ are homotopic in $S^2_{4,*}$, but we picked two different lifts to the covering space, to suggest a difference in grading.}
\label{fig:example1}
\end{figure}

The examples in \cref{fig:example1,fig:cabTrefHF} display behaviour that is generic in the following sense. On the knot Floer side, $\mathscr{F}_{p,q}$ always preserves the non-compact component of $\wh{\HF}(K^c)$, whereas $\Cb^0$ almost never does---see \cref{thm:curveFactorization}. In both cases, iterating the operator results in exponential growth of the rank of the invariant $\wh{\HF}(K^c)$ or $\wt{\BN}(T_h)$ (defined as the rank of the module underlying the type D structure that corresponds to the immersed curve). The precise computation of $\Cb(\wt{\BN}(T_{3_1})$ is in \cref{sec:trefoilComputation}.

Most of our work follows from the following factorization result together with some model computations. To state it, we use the notion of a peg-board diagram from \cite[\S7.1]{HRW23}: thicken every puncture of $S^2_{4,*}$ into an open disc of small radius $\epsilon_k$, for $k \in \{1, 2, 3, 4\}$, a so-called ``peg". The covering space of $S^2_{4,*}$ is then a ``peg-board" 
\[\R^2 \setminus \bigcup_{(i,j) \in \Z^2} D^2_{\epsilon_{k}}(i/2 , j/2),\]
where $k$ depends on $(i, j)$. If $\gamma$ is an element of $\wrapFuk(S^2_{4,*})$, always pick a representative of $\gamma$ that, away from its ends, is geodesic with respect to the flat metric on the peg-board covering space (the ends of $\gamma$ necessarily join the boundary). Let $\gamma|_{D_\bullet=0}$ denote the limit of the peg-board representative of $\gamma$ as the radii of the pegs corresponding to non-special punctures along columns containing $\ast$ go to 0. We call this new curve the one obtained from $\gamma$ by ``pulling tight at $D_\bullet$". We can now make the following statement; see \cref{def:tightAtD} and \cref{thm:factorization} for the more refined version that takes gradings into account (by stating it in terms of type D structures), and \cref{fig:cabTrefoilFactorization} for an illustration.

\begin{thm}\label{thm:curveFactorization} Ignoring grading information, the operator $\Cb^0 \co \wrapFuk(S^2_{4,*}) \ra \wrapFuk(S^2_{4,*})$, restricted to immersed curve invariants of cap-trivial tangles, factors through pulling tight at $D_\bullet$: if $T$ is cap-trivial then
\[\Cb^0(\wt{\BN}(T)) = \Cb^0\big(\wt{\BN}(T)|_{D_\bullet=0}\big).\]
\end{thm}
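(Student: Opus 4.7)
The plan is to identify $\Cb^0$ at the level of the Fukaya category (equivalently, the category of type D structures over $\mc{B}$) as the endofunctor induced by gluing with the fixed $4$-to-$4$ pattern tangle $P \subset S^2 \times I$ sitting between the inner and outer boundary spheres of \cref{fig:21PatternSI}, and then show that the summand of $\wt{\BN}(T)$ killed by pulling tight at $D_\bullet$ has null-homotopic image under this operator. Concretely, I would exhibit a direct sum decomposition
\[\wt{\BN}(T) \simeq \wt{\BN}(T)|_{D_\bullet=0}\,\oplus\, E_T\]
at the type D level over $\mc{B}$, and then prove $\Cb^0(E_T) \simeq 0$, from which the statement of the theorem is immediate.

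The decomposition can be made explicit using the peg-board description. Pulling tight at $D_\bullet$ is realized as a sequence of cancellations of type D generators along algebra elements supported near the non-special pegs in the $\ast$-columns. The surviving ``tail'' $E_T$ is thus spanned by local pieces of the curve: short arcs running between adjacent such pegs, together with small loops encircling an individual such peg. Cap-triviality of $T$ enters at this stage to constrain the possible shapes of $\wt{\BN}(T)$ near $D_\bullet$ --- in particular, the unique non-compact component of the curve has a restricted local behaviour there --- reducing the classification of local generators of $E_T$ to a manageable finite list.

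Finally, one analyzes the action of $\Cb^0$ on each local model appearing in $E_T$. The expectation is that a short arc or small loop hugging a $D_\bullet$-peg is cabled to a tangle configuration that is either isotopic in $S^2_{4,*}$ to a trivial curve or, at the type D level, splits off a null-homotopic summand after applying Bar-Natan skein simplifications. The main obstacle is precisely this local computation: one must work out, case by case, the image under $\Cb^0$ of each local generator of $E_T$ and verify that the result is null-homotopic in $\wrapFuk(S^2_{4,*})$. I expect this analysis to share a common stock of local calculations with the model computations used to pin down the non-compact component of $\Cb^0(\wt{\BN}(T))$ in \cref{thm:geography}, so that the factorization theorem and the geography result are extracted from the same underlying case analysis.
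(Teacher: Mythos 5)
The core move in your proposal does not go through: the claimed decomposition
\[
\wt{\BN}(T) \simeq \wt{\BN}(T)|_{D_\bullet=0}\,\oplus\, E_T
\]
is not a decomposition that exists. Pulling tight at $D_\bullet$ is \emph{not} the projection onto a direct summand of $\Rd(T)$; it is the operation of deleting the $D_\bullet$-labelled arrows from the differential. For a minimal counterexample, take $\Rd = (\bullet \xra{D_\bullet} \bullet)$: then $\Rd|_{D_\bullet=0}$ has the same underlying module but zero differential, so any purported $E$ with $\Rd \simeq \Rd|_{D_\bullet=0}\oplus E$ would force $E=0$ and then $\Rd$ would have to equal $\Rd|_{D_\bullet=0}$, which it does not. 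More generally, the two type D structures $\Rd(T)$ and $\Rd(T)|_{D_\bullet=0}$ are genuinely different objects of $\cat{Mod}^\mc{B}$ (they pair differently with test curves), so the theorem cannot be reduced to killing a summand of $\Rd(T)$ under $\Cb$.

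The actual content of the theorem is a statement about what happens \emph{after} applying the cabling operator. The paper's argument (\cref{lem:elbow} followed by \cref{thm:factorization}) shows that when you feed a $D_\bullet$-elbow of $\Rd(T)$ through the model $\Cb_D$ and simplify via cancellation and clean-up, the cabled differential splits exactly along the image of the $D_\bullet^k$ edge; iterating over all $D_\bullet$ edges (all of which are constrained to lie in $\bullet$-elbows by \cref{lem:capTrivialConstraints} and curve-likeness) yields $\Cb(\Rd(T)) \simeq \Cb(\Rd(T)|_{D_\bullet=0})$. So the decomposition you want lives at the level of $\Cb(\Rd(T))$, not of $\Rd(T)$. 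Your instinct that cap-triviality constrains the local shape near $D_\bullet$ is correct and indeed is what \cref{lem:capTrivialConstraints} and \cref{lem:rdleaf} deliver; and your suspicion that a finite case analysis of local models is needed is also on target (the three cases in the Elbow Lemma's proof). But the case analysis must be performed \emph{on the cabled complex} $\Cb_D(\Rd(T))$, not on a nonexistent summand of the input, and the conclusion is not ``a summand maps to zero'' but rather ``the cabled differential factors.''
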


\begin{figure}[h]
\labellist
\hair 2pt
\pinlabel $\Cb^0$ at 370 600
\pinlabel $D_\bullet{=0}$ at 224 350
\pinlabel $\Cb^0$ at 520 348
\endlabellist
	\centering
	\includegraphics[width=0.7\textwidth]{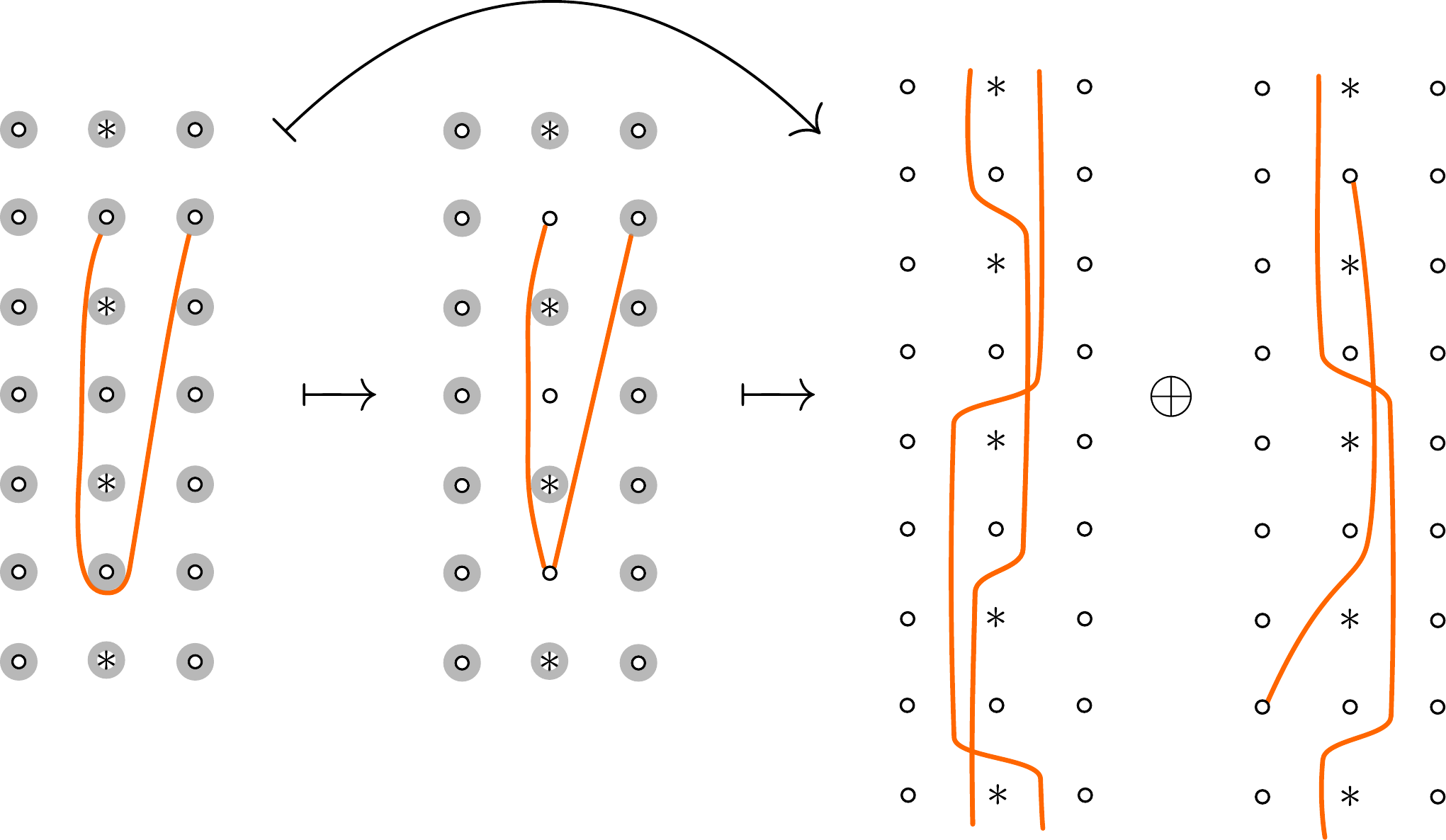}
	\caption{Factorization of $\Cb^0$ through pulling tight at $D_\bullet$. The summands on the right correspond to the two components of the pulled-tight curve in the middle.}
	\label{fig:cabTrefoilFactorization}
\end{figure}

So far, we have stated our results in the language of the Fukaya category. In practice, the immersed curve invariant $\wt{\BN}(T)$ can be identified with a particularly nice representative of a homotopy class of chain complexes over a two-object $k$-linear category $\mc{B}$, where $k$ is a field. Equivalently, it is a type D structure over the algebra of morphisms of $\mc{B}$, which is also called $\mc{B}$. The category of bigraded type D structures over $\mc{B}$ is denoted $\cat{Mod}^\mc{B}$, and it is in this more algebraic category that we truly work. We discuss this all in \cref{sec:BNAlgebra,sec:curves}. Our cabling operator is then more honestly described as an endofunctor
\[\Cb^0 \co \cat{Mod}^{\mc{B}} \ra \cat{Mod}^{\mc{B}}.\]
(Again, note that we reuse the symbol $\Cb$). This endofunctor is an example of a planar algebra operation as defined in \cite[\S5]{BN05} and this is where the tractability of our computation comes from. Crucially, $\Cb^0 \co \cat{Tan}(4) \ra \cat{Tan}(4)$ takes the form of the annular tangle depicted at the bottom of \cref{fig:21PatternSI}. Annular tangles or, more generally, tangles with diagrams living in $D^2$ with sub-discs removed, have the structure of a planar algebra that is precisely described by Bar-Natan. Moreover, the particular annular tangle we consider has a sufficiently small number of crossings to allow for pencil-and-paper computations. One would also obtain tangle operators for any other $(p,q)$-cabler of strongly invertible knots, but the crossing-numbers suggest needing computer assistance to do anything serious in more generality. We will further reduce the crossing number by working with the tangle operator $\Cb$ in \cref{fig:planAlgOperator}. The difference between the two operators is how the output is framed. We will show in \cref{prop:detCb} that the operator in \cref{fig:21PatternSI} takes Seifert-framed cap-trivial tangles to Seifert-framed cap-trivial tangles, so $\Cb^0$ is, topologically, the ``correct" operator.

One perspective on the present work is to see it as a step in the study of the category of complexes of annular tangles with 4 endpoints on each boundary circle. Elements of this category induce bimodules on tangle invariants, in the sense of \cite{LOT15}, so it is the avenue for understanding the transformations that apply to immersed curve invariants of 4-ended tangles. While this 4-4 annular cobordism category is expected to be quite large, a more tractable category is
\[\cat{Kob}\left(\vcenter{\hbox{\protect\includegraphics[scale=0.07]{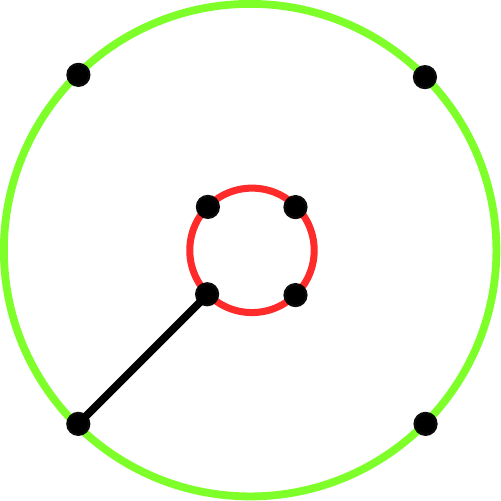}}}\right),\]
which is where our operator lives and, indeed, where live all operators on cap-trivial tangles induced by cabling of strongly invertible knots. To the author's knowledge, the only other step in this direction is done in \cite[\S8]{KWZ19}, in which case the subcategory under consideration is
\[\cat{Kob}\left(\vcenter{\hbox{\protect\includegraphics[scale=0.07]{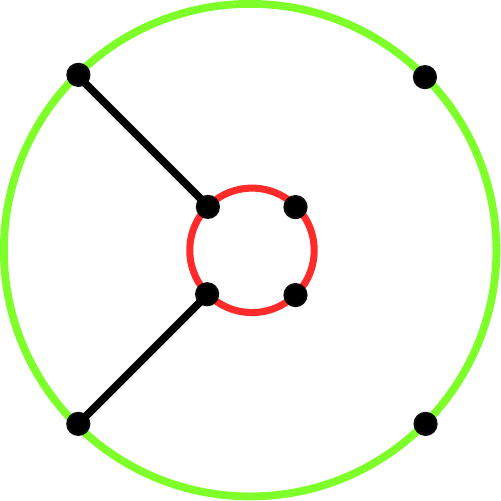}}}\right).\]
This latter category contains the operator induced by reframing a tangle or, equivalently, by applying a (half) Dehn twist to the rightmost endpoints. In \cite{KWZ19}, the study of the above category led to the very useful \cref{thm:MCG}. 

From this perspective, we may summarize our work as the computation of a partial bimodule over $\mc{B}$. Whereas such bimodules may end up being quite difficult to compute in general, our results show that it is possible to get tractability, if not ease, of computation by restricting to submodules of interest.


Finally, we touch on the work of \cite{LZ24}. Therein, Lewark--Zibrowius define and study a new concordance invariant $\vartheta_c$, parametrized by a prime number $c$. One critical class of knots for the behaviour of $\vartheta_c$ is termed the $\vartheta_c$-rational knots. Precisely, a knot $K$ is $\vartheta_c$-rational if and only if the tangle associated with the strong inversion on $K \# K$ has the non-compact component of its immersed curve invariant equal to $\wt{\BN}(\oRes)$ \cite{Mar25}. More generally, we can define a $\vartheta_c$-rational tangle $T$ to be a cap-trivial tangle with non-compact immersed curve invariant equal to $\wt{\BN}(\oRes)$. On the one hand, given the restrictiveness of this condition, it is expected that tangles should generically not be $\vartheta_c$-rational. On the other hand, for every Seifert-framed cap-trivial tangle $T$, the tangle $\Cb^0(T)$ is $\vartheta_c$-rational. Therefore, iteration of $\Cb^0$ produces $\vartheta_c$-rational tangles in infinite families:

\begin{prop}\label{prop:CbRational} The class of $\vartheta_c$-rational tangles is fixed under the action of the operator $\Cb^0$.
\end{prop}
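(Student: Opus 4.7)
The plan is to combine preservation of cap-triviality under $\Cb^0$ with the additivity of $\Cb^0$ as a planar algebra endofunctor, reducing the proposition to the triviality of $(2,1)$-cabling on the unknot.

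First I would verify that $\Cb^0$ preserves cap-triviality. A cap-trivial $T$ is the tangle $T_h$ of some strongly invertible knot $(K,h)$, and by the topological construction of $\Cb^0$ in \cref{fig:21PatternSI}, its image $\Cb^0(T)$ is the tangle $T_{h_{2,1}}$ associated to the strongly invertible $(K_{2,1},h_{2,1})$; hence $\Cb^0(T)$ is again cap-trivial.

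Next, suppose $T$ is $\vartheta_c$-rational and decompose its immersed curve invariant as
\[\wt{\BN}(T) \;=\; \wt{\BN}(\oRes) \oplus C,\]
where $C$ collects the compact components. Since $\Cb^0 \co \cat{Mod}^{\mc{B}} \to \cat{Mod}^{\mc{B}}$ is an endofunctor arising from a planar algebra operation, it respects direct sums, giving $\Cb^0(\wt{\BN}(T)) = \Cb^0(\wt{\BN}(\oRes)) \oplus \Cb^0(C)$. A compact immersed curve inserted into the annular pattern of \cref{fig:21PatternSI} produces only compact output (no new endpoints are introduced), so $\Cb^0(C)$ has no non-compact component. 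Therefore the unique non-compact component of $\Cb^0(\wt{\BN}(T))$ equals the unique non-compact component of $\Cb^0(\wt{\BN}(\oRes))$.

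It remains to compute $\Cb^0(\wt{\BN}(\oRes))$. The tangle $\oRes$ is associated with the unknot in its Seifert framing, and the $(2,1)$-cable of the unknot is the unknot itself; by \cref{prop:detCb} the Seifert framing is preserved under $\Cb^0$. Thus $\Cb^0(\oRes)$ is again a Seifert-framed tangle of the unknot, whose immersed curve invariant has non-compact component $\wt{\BN}(\oRes)$. Combined with the reduction above, this shows $\Cb^0(T)$ is $\vartheta_c$-rational.

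The main obstacle lies in the additivity step: rigorously separating the non-compact component requires showing, at the level of underlying type D structures over $\mc{B}$, that compact summands of $\wt{\BN}(T)$ yield only compact summands under $\Cb^0$. Although intuitively clear from the annular picture, this should be verified directly from the explicit description of $\Cb^0$ as the planar algebra operator depicted in \cref{fig:planAlgOperator}, and is where I expect the bulk of the technical work to lie.
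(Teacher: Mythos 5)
Your proof takes a genuinely different route from the paper's. The paper works exclusively with the non-compact curve $\wt{\BN}^a(T)$: it applies the factorization result (\cref{thm:curveFactorization}) to pass to $\wt{\BN}^a(T)|_{D_\bullet=0}$, decomposes this as a homogeneous chain plus $\wt{\BN}(Q_{2n})$, and then invokes the model computation of \cref{subsec:unknot}. You instead split off the compact summands of the full $\wt{\BN}(T)$, use additivity of the planar-algebraic operator over direct summands of the type D structure, and handle the non-compact summand via the topological identity $\Cb^0(\oRes) = \oRes$ (the Seifert-framed $(2,1)$-cable of the unknot is the unknot, with framing preserved by \cref{prop:detCb}). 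This buys a cleaner, essentially computation-free argument that bypasses the elbow lemma, homogeneous-chain lemma, and the unknot model computation altogether.

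The gap you flag at the end is not, however, where work remains; it is already a consequence of what you have established. Since $\Cb^0(T)$ is cap-trivial, \cref{eq:singTow} forces $\Cb^0(\wt{\BN}(T))$ to have exactly one non-compact component. The immersed-curve decomposition of a direct sum of type D structures is the union of the decompositions of the summands, and the summand $\Cb^0(\wt{\BN}(\oRes)) = \wt{\BN}(\oRes)$ already supplies that one non-compact component, so $\Cb^0(C)$ must contribute none. Be wary of trying to prove this directly instead: the heuristic that ``no new endpoints are introduced'' has no literal content, because compact loops do not arise as tangle invariants and so cannot be inserted into the annular diagram of \cref{fig:21PatternSI}; moreover \cref{lem:capTrivialConstraints} constrains only $\Rd^a(T)$, so the paper's machinery says nothing about how $\Cb^0$ acts on compact summands. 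The indirect argument above is both necessary and sufficient.
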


\subsection{Organization}

\Cref{sec:background} summarizes Bar-Natan's tangle theory \cite{BN05} and the algebraic part of \cite{KWZ19}, with a small change in notation: our handle addition operation is denoted $G$ and differs by a sign from the one in \cite{KWZ19}. \Cref{sec:construction} uses Bar-Natan's planar algebra structure to construct the operator $\Cb \co \cat{Mod}^\mc{B} \ra \cat{Mod}^\mc{B}$. \Cref{sec:properties} deals with the geometric part of \cite{KWZ19} as a means to obtain geography restrictions on the Bar-Natan invariants of tangles that arise from strong inversions and, ultimately, proves the technical lemmas at the heart of this project. The main insights in this paper can be gleaned from our computations in \cref{subsec:unknot,sec:trefoilComputation}.

\section{Background on Bar-Natan homology}\label{sec:background}

In this section we lay down notation for Bar-Natan's relative version of Khovanov's homology theory \cite{Kho00}, following \cite{BN05,KWZ19}.

\begin{wrapfigure}{R}{0.25\textwidth}\includegraphics[width=0.23\textwidth]{./Figs/RTrefoilMinCross}\end{wrapfigure} 

Tangles are often defined to be proper embeddings $T$ of a compact 1-manifold into the 3-ball $B^3$:
\[T \co {S^1}^{\sqcup l} \sqcup I^{\sqcup n} \hookrightarrow B^3,\]
where $l,n \in \N$, and up to ambient isotopies that fix the boundary $\del B^3$ set-wise. However when we speak of tangles, we will always mean pointed tangles that are \textit{framed}, meaning that one endpoint of $T$ is marked $\ast$, and the endpoints of $T$ are fixed equidistantly on an equatorial circle of $\del B^3$, such as in the adjacent figure. For such tangles, isotopies are required to fix the boundary of $B^3$ pointwise. When we draw a diagram of a tangle, if the marked endpoint is not indicated, it is assumed to be the top-left one. The collection of $2n$-ended tangles is denoted $\cat{Tan}(2n)$.



\subsection{Complexes of cobordisms}
\label{sec:notnReview}

Let $R$ be a commutative ring---in practice, either $\Z$ or a field $\k$.

\begin{defn} The category $\cat{Cob}(B)$ consists of crossingless 1-manifolds properly embedded in the disc $D$, with boundary equal to $B \subset \del D$, and up to isotopy. Morphisms are $R$-linear combinations of cobordisms that fix the boundary, up to isotopy.
\end{defn}

We often write $\cat{Cob}(4)$ to mean $\cat{Cob}(B)$, for some $B \subset D$ of cardinality 4. We also omit $B$ from the notation if we wish to speak in general terms, as in the following definition.

\begin{defn} The category $\cat{Cob}_{/l}$ has the same objects as $\cat{Cob}$, but its morphisms are quotients of morphisms in $\cat{Cob}$ by the $/l$ relations, depicted in \cref{fig:locRel}. Under these relations, a cobordism with a sphere component is equivalent to 0, a cobordism with a torus component is equivalent to twice the cobordism with the torus removed, and one can replace a cobordism containing a handle with a linear combination of cobordisms with the handle moved to different attaching regions in a ball neighbourhood, as in the figure.
\end{defn}

\begin{figure}[h]
	\labellist
	\hair 2pt
	\pinlabel $2$ at 124 102
	\pinlabel $0$ at 310 102
	\endlabellist
	\centering
	\includegraphics[scale=0.6]{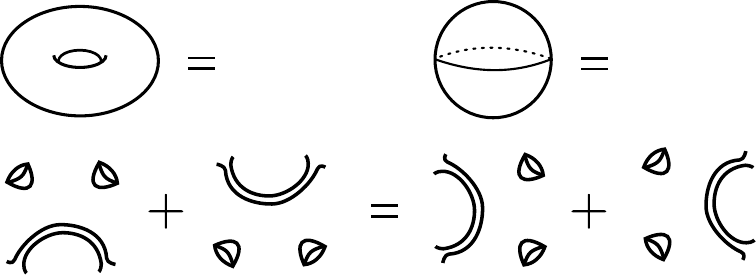}
	\caption{The $/l$ relations.}
	\label{fig:locRel}
\end{figure}

\begin{rmk} Both $\cat{Cob}$ and $\cat{Cob}_{/l}$ are preadditive categories, by design. They are formally upgraded into additive categories in the following manner.
\end{rmk}

\begin{defn} For any preadditive category $\cat{C}$, its \textbf{additive closure} $\cat{Mat}(\cat{C})$ consists of formal finite direct sums of objects in $\cat{C}$, with morphisms being matrices of morphisms in $\cat{C}$. 
\end{defn}

Here is an example, to make the notation completely explicit. A morphism in $\cat{Mat}(\cat{Cob})$ from $\mc{O}_1\oplus \mc{O}_2$ to $\mc{O}_1' \oplus \mc{O}_2'$ is a collection of four (possibly non-trivial $R$-linear combinations of) cobordisms $C_{ij} \co \mc{O}_j \ra \mc{O}'_i$, written as a matrix:
\[\begin{array}{c} \mc{O}_1 \\ \oplus \\ \mc{O}_2 \end{array} 
\xra{
	\begin{pmatrix} 
		C_{11} & C_{12} \\ 
		C_{21} & C_{22}
	\end{pmatrix}}
\begin{array}{c} \mc{O}'_1 \\ \oplus \\ \mc{O}'_2 \end{array}.
\]
Composition is given by the usual convention for matrix multiplication.

\begin{rmk} The category $\cat{Mat}(\cat{Cob}(B))$ is additive and contains a 0 object for formal reasons, as the empty direct sum. The 0 object is different from the empty manifold $\Oset \in \cat{Cob}(\Oset)$.
\end{rmk}

\begin{defn} If $\mc{A}$ is an additive category, then $\cat{Kom}(\mc{A})$ is the category of chain complexes over $\mc{A}$, and $\cat{Kom}_{/h}(\mc{A})$ is the na\"ive homotopy category of chain complexes over $\mc{A}$: it has the same objects as $\cat{Kom}(\mc{A})$, but morphisms in $\cat{Kom}_{/h}(\mc{A})$ are chain homotopy classes of morphisms in $\cat{Kom}(\mc{A})$.
\end{defn} 

\begin{notn} As a shorthand, let $\cat{Kob}(B) = \cat{Kom}(\cat{Mat}(\cat{Cob}^3_{/l}(B)))$ (for ``\textbf{Ko}mplexes of c\textbf{ob}ordisms", perhaps). Finally, let $\cat{Kob}_{/h}(B)$ be the category $\cat{Kob}(B)$ modulo chain homotopy, where $B$ may be empty. We also write write $\cat{Kob}$ and $\cat{Kob}_{/h}$ if $B$ is understood, or if we wish to talk about $\cup_k \cat{Kob}(k)$ or $\cup_k \cat{Kob}_{/h}(k)$.
\end{notn}

Importantly, Bar-Natan's cobordism categories are graded in the following sense.

\begin{defn}\label{def:gradCat} A \textbf{graded category} is a preadditive category where the hom-groups are graded Abelian groups such that $\mathrm{deg}(f\circ g) = \mathrm{deg}(f)+ \mathrm{deg}(g)$ whenever composition makes sense. Moreover, there is a $\Z$-action on objects 
\[(m, \mc{O}) \mapsto \prescript{m}{}{\mc{O}}\]
that does not change the hom-groups as Abelian groups. The $\Z$-action affects the gradings as follows: if $f \in \Hom(\mc{O}_1, \mc{O}_2)$ has degree $d$, then the corresponding morphism $f^{m_1}_{m_2} \in \Hom(\prescript{m_1}{}{\mc{O}_1}, \prescript{m_2}{}{\mc{O}_2})$ has degree $d + m_2 - m_1$.
\end{defn}

\begin{defn} \label{def:cobQuantGr} Let $C$ be a morphism in $\cat{Cob}(B)$. Define the quantum grading of $C$ to be 
\[q(C) = \chi(C) - \frac{1}{2}|B|.\]
\end{defn}

\begin{prop}[\!\! \cite{BN05} Exercise 6.3] \label{prop:kobIsGr} With the above definition, $\cat{Cob}(B)$ is a graded category. Moreover, the $/l$ relations are degree-homogeneous, so the grading descends to the quotient and $\cat{Cob}_{/l}(B)$ is graded as well. It follows that so are $\cat{Kob}(B)$ and $\cat{Kob}_{/h}(B)$.
\end{prop}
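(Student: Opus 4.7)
The plan is to verify the axioms of \cref{def:gradCat} by reducing everything to two elementary facts about Euler characteristic. First, any crossingless $1$-manifold $S$ properly embedded in $D$ with boundary $B$ decomposes as a disjoint union of $|B|/2$ arcs and finitely many circles, so $\chi(S) = |B|/2$ regardless of how many closed components it has, since circles contribute $0$. Second, by inclusion--exclusion, if $C_1 \co S_0 \to S_1$ and $C_2 \co S_1 \to S_2$ are composable cobordisms, then $\chi(C_2 \circ C_1) = \chi(C_1) + \chi(C_2) - \chi(S_1)$, since $C_2 \circ C_1 = C_1 \cup_{S_1} C_2$ is a clean gluing along the collared submanifold $S_1$.

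Substituting $\chi(S_1) = \tfrac{1}{2}|B|$ into the additivity formula and rearranging yields multiplicativity of $q$: $q(C_2 \circ C_1) = q(C_1) + q(C_2)$. For the identity cobordism $S \times I$, $\chi(S \times I) = \chi(S) = \tfrac{1}{2}|B|$ gives $q(\mathrm{id}_S) = 0$. The required $\Z$-action on objects is a formal grading shift, available whenever the hom-groups are graded: declare $\prescript{m}{}{\mc{O}}$ to have the same underlying $1$-manifold as $\mc{O}$ but relabel each morphism so its degree shifts as in \cref{def:gradCat}. With this, $\cat{Cob}(B)$ is a graded category.

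Next, I would verify that each of the $/l$ relations from \cref{fig:locRel} is degree-homogeneous. The sphere relation $C \sqcup S^2 = 0$ is automatically homogeneous, since the zero morphism lies in every degree. The torus relation $C \sqcup T^2 = 2 C$ is homogeneous because $\chi(T^2) = 0$, so disjoint union with a torus leaves $q$ unchanged. The handle (neck-cutting) relation equates a cobordism containing a handle with a linear combination of cobordisms in which the handle has been relocated within a fixed ball neighbourhood; since relocation preserves Euler characteristic, every term of the combination has the same $q$-degree. Hence the grading descends to $\cat{Cob}_{/l}(B)$.

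Finally, the passages to $\cat{Kob}(B) = \cat{Kom}(\cat{Mat}(\cat{Cob}_{/l}(B)))$ and to its homotopy quotient $\cat{Kob}_{/h}(B)$ are formal: $\cat{Mat}(-)$ inherits the grading entry-wise, the chain-complex construction preserves the quantum grading (which is independent of the homological grading), and chain homotopies respect it for the same reason. Since this is essentially Bar-Natan's Exercise~6.3, I expect no serious obstacle; the one point deserving care is checking that every term of the handle relation really has equal Euler characteristic, which follows once one observes that the relation is local and leaves the topology outside a fixed ball untouched.
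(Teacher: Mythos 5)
The paper gives no proof of this proposition — it defers to Bar-Natan's Exercise 6.3 — so there is no in-text argument to compare against. Your proof is correct and is essentially the standard verification: the inclusion--exclusion identity $\chi(C_2 \circ C_1) = \chi(C_1) + \chi(C_2) - \chi(S_1)$ combined with $\chi(S_1) = \tfrac{1}{2}|B|$ gives additivity of $q$ under composition; the $\Z$-action is a formal shift; and each of the three $/l$ relations is degree-homogeneous because neither removing a torus, relocating a handle within a ball, nor setting a homogeneous element to zero changes Euler characteristic. The only minor point worth spelling out is that the interface $S_1$ along which $C_1$ and $C_2$ are glued is a collared, properly embedded $1$-manifold with $\chi(S_1) = \tfrac{1}{2}|B|$ independently of how many closed components it has — which you do address explicitly. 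The passage to $\cat{Mat}$, $\cat{Kom}$, and the homotopy quotient is indeed purely formal. No gaps.
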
 

It will be convenient to have a bit more structure, namely to make $\cat{Cob}$ into a \textit{bigraded} category. We do this by endowing cobordisms $C$ with a constant 0 grading,
\[h(C) = 0,\] 
called the \textit{homological} grading. We denote the corresponding $\Z$-action by
\[(n, \mc{O}) \mapsto \mc{O}_n.\]

\begin{notn} To allow ourselves the option of using indexing subscripts, we also sometimes use the homological algebra notation 
\[\mc{O}[n]\{m\} := \prescript{m}{}{\mc{O}_n}.\]
\end{notn}

From this point onwards, the categories $\cat{Cob}, \cat{Cob}_{/l}, \cat{Kob}$, etc. are all considered bigraded. Thus each of them is enriched over the category of bigraded groups or, as in \cite{KWZ19}, over bigraded chain complexes. 

We can now define the basic tangle invariant that we consider in this paper (assuming familiarity with Khovanov's construction of the cube of resolutions). Given an $n$-crossing tangle diagram $D$ with crossings labelled $1, \dots, n$, let $n_+, n_-$ be the number of positive and negative crossings in $D$, and let $\bsymb{\delta} = (\delta_1, \dots, \delta_n)$ be a coordinate of the cube $\{0,1\}^n$. Let
\[D^{\bsymb{\delta}} \in \cat{Cob}_{/l}\]
be the crossingless diagram obtained by performing the $\delta_i$-resolution at the $i^\text{th}$ crossing.

\begin{defn} \label{def:brGr} The bracket $\br{D} \in \cat{Kom}(\cat{Mat}(\cat{Cob}))$ of a diagram $D$ is the cube of resolutions of $D$, with the following bigrading shift on the object $\br{D}$ that is at the coordinate $\bsymb{\delta} = (\delta_1, \dots, \delta_n)$:
\[\br{D}(\bsymb{\delta}) = D^{\bsymb{\delta}}\left[n_- + \sum_i \delta_i\right]\left\{n_+ - 2n_- + \sum_i \delta_i\right\}.\]
\end{defn}
Note the following potentially confusing point. Each morphism in the cube $\br{D}$ is a saddle, and so it has inherent homological grading 0 and quantum grading $-1$ (independent of $|B|$), according to our definition. However, after we apply the bigrading shifts that define $\br{D}$, as an element of
\[\Hom(\br{D}(\bsymb{\delta}), \br{D}(\bsymb{\delta}^+)),\]
(where $\bsymb{\delta}^+$ is just $\bsymb{\delta}$ with one of the zero entries incremented by 1) a saddle has homological grading 1 and quantum grading 0.

\begin{defn} Let $T$ be an oriented tangle with a diagram $D$. The Bar-Natan bracket $\br{T}_{/l} \in \cat{Kob}$ is the quotient of $\br{D}$ by the $/l$-relations.
\end{defn}

Bar-Natan shows that this construction is a bigraded tangle invariant:

\begin{thm}[\!\! \cite{BN05} Theorem 3] \label{thm:qDegZero} All differentials in $\br{T}_{/l}$ have quantum grading 0. If $D_1$ and $D_2$ are diagrams for the same tangle $T$, then there is a chain homotopy $F \co \br{D_1}_{/l} \ra \br{D_2}_{/l}$ that has $q$-grading 0.
\end{thm}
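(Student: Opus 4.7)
The plan is to verify the two claims separately through direct grading calculations, using \cref{def:cobQuantGr} and \cref{def:brGr}.

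For the first claim, it suffices to compute the $q$-grading of a single saddle cobordism $s$ on an edge of the cube of resolutions. Fix $B \subset \del D$ with $|B|=2m$. For any crossingless tangle representing an object of $\cat{Cob}(B)$, which consists of $m$ arcs together with some number of closed circles, the identity cobordism has Euler characteristic $m$: each arc times $I$ is a disk contributing $+1$ and each circle times $I$ is an annulus contributing $0$. A saddle differs from the identity by the attachment of a single $1$-handle, so $\chi(s) = m-1$ and hence $q(s) = (m-1) - \frac{1}{2}(2m) = -1$ by \cref{def:cobQuantGr}. Now compare the quantum shifts on source and target in \cref{def:brGr}: passing from $\bsymb{\delta}$ to $\bsymb{\delta}^+$ increases $\sum_i \delta_i$ by $1$, so the quantum shift on the target exceeds that on the source by exactly $1$. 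By the compatibility rule in \cref{def:gradCat}, the effective $q$-grading of $s$ as a morphism in the shifted hom-group is $-1 + 1 = 0$. This conclusion descends to $\br{T}_{/l}$ since the $/l$-relations are degree-homogeneous by \cref{prop:kobIsGr}.

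For the second claim, I would verify that the standard chain homotopy equivalences realizing invariance under the three Reidemeister moves---as given in \cite[\S4.3]{BN05}---can each be chosen $q$-homogeneous of degree $0$. Each such equivalence is built from elementary cobordisms (cups, caps, saddles, and identity cylinders) assembled with specific grading shifts dictated by \cref{def:brGr}. Computing $q$ for each building block directly: a cup or cap has $\chi=1$ with $|B|=2$, so $q=0$; a cylinder has $q=0$; a saddle has $q=-1$. The explicit bracket shifts appearing in each Reidemeister move involve $n_\pm$ changing in a controlled way, and these shifts are precisely what is required to compensate each building block. Concatenating such maps along a sequence of Reidemeister moves connecting $D_1$ and $D_2$, together with the first claim which ensures the intervening differentials are already $q$-homogeneous, delivers the desired $F$ of $q$-degree $0$.

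The main obstacle is the bookkeeping for the second claim: the chain homotopy equivalences for R2 and R3 involve several pieces, each carrying its own shift, and one must verify these shifts truly combine to $0$ across every move. A clean way to sidestep case-by-case verification is to observe that Bar-Natan's invariance proof is modelled on Khovanov's \cite{Kho00}, where the quantum grading was designed so that all structural maps live in $q$-degree $0$; upgrading from graded to bigraded and from $\cat{Kob}$ to $\cat{Kob}_{/h}$ does not disturb this design, because both the $/l$-relations and the extra homological shift are defined to be $q$-homogeneous of the appropriate degree.
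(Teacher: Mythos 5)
The paper does not prove this statement; it is cited directly from Bar--Natan \cite{BN05}, Theorem 3. Your proposal is therefore being measured against Bar--Natan's original argument rather than an in-paper proof, and it is a faithful reconstruction of that argument: the saddle computation $q(s)=-1$, the observation that the cube shift raises the target's quantum shift by exactly one so the net degree is zero, and the descent to $\br{T}_{/l}$ via \cref{prop:kobIsGr} are all correct and exactly how the first claim is proved. For the second claim, you correctly identify that the content lives in the Reidemeister-move homotopy equivalences and that their components (cups, caps, cylinders, saddles) have the right inherent degrees to cancel against the bigrading shifts; the final remark that Bar--Natan's grading was engineered to make all structural maps $q$-degree $0$ is the right high-level justification. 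If you wanted to make the second part airtight you would spell out the shift bookkeeping for R1, R2, R3 explicitly (R1 changes $n_\pm$ by one, R2 by one each, R3 leaves them unchanged), but as a sketch pointing to \cite[\S4.3]{BN05} this is entirely adequate.
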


Thus, $\br{T}_{/l}$ is, up to chain homotopy, and invariant of the isotopy class of $T$. Moreover, by composing with an appropriate TQFT, we can recover Khovanov homology, as well as other knot homology theories from the Bar-Natan bracket invariant \cite[\S7 and \S9]{BN05}.


\subsection{Homological algebra} \label{sec:homAlg}

Here, we restate, without proof, Lemmas 2.16 and 2.17 of \cite{KWZ19}, which are the main homological algebra tools that we use in the computations that form the heart of our work; see \cref{eg:CiRes} an application of both lemmas. Let $C$ be any bigraded category.

\begin{lem}[Cancellation Lemma]\label{lem:cancel}
Let $(X, \delta)$ be a chain complex in $\cat{Kom}(\cat{Mat}(C))$. Suppose that the object $X$ splits in $\cat{Mat}(\cat{Cob})$ as a direct sum $Z \oplus Y_1 \oplus Y_2$ such that the components of the differential $\delta$ that map $Y_1$ to $Y_2$ form an isomorphism $f \co Y_1 \ra Y_2$ in $\cat{Mat}(\cat{Cob})$. Write the complex $(X, \delta)$ as
\[\begin{tikzcd}[column sep = small]
													&	Z \ar[loop above, "\zeta"] \ar[dl, bend left = 10, "b"]\ar[dr, bend left = 10, "e"]	&	\\
Y_1 \ar[loop left,"\epsilon_1"] \ar[rr, bend left = 10, "f"] \ar[ur, bend left = 10, "a"]	&												& Y_2 \ar[loop right, "\epsilon_2"] \ar[ul, bend left= 10, "c"] \ar[ll, bend left = 10, "e"]
\end{tikzcd},\]
where each arrow is the component of the differential $\delta$ with its domain and codomain restricted. Then the there is a chain-homotopy 
\[(X, \delta)  \simeq (Z, \zeta - af^{-1}e) .\]
\qed
\end{lem}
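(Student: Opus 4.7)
The plan is to prove this by classical Gaussian elimination: since $f \colon Y_1 \to Y_2$ is an isomorphism in $\cat{Mat}(C)$, the sub-diagram $Y_1 \xrightarrow{f} Y_2$ is an acyclic two-term complex, and should be cancellable after rerouting the differentials that passed through it. The corrected differential $\zeta - af^{-1}e$ on $Z$ exactly records the length-three paths $Z \xrightarrow{e} Y_2 \xrightarrow{f^{-1}} Y_1 \xrightarrow{a} Z$ that are no longer directly available once $Y_1$ and $Y_2$ have been removed.

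The first step is bookkeeping: record $\delta$ as a $3\times 3$ matrix of components in $\cat{Mat}(C)$ with respect to $X = Z \oplus Y_1 \oplus Y_2$, and extract the nine quadratic relations that $\delta^2=0$ imposes among $\zeta, a, b, c, e, f$, the loop terms $\epsilon_1, \epsilon_2$, and the $Y_2 \to Y_1$ arrow appearing in the diagram. These will be the only facts used subsequently, together with the existence of $f^{-1}$.

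The second step is to exhibit explicit maps
\[
\iota \colon (Z, \zeta - af^{-1}e) \longrightarrow (X, \delta), \qquad
\pi \colon (X, \delta) \longrightarrow (Z, \zeta - af^{-1}e), \qquad
h \colon X \longrightarrow X,
\]
as block matrices whose only nonstandard entries involve $f^{-1}$. The natural guess is that $\iota$ embeds $z$ as $(z, -f^{-1}e(z), 0)$ up to further corrections by the ``extra'' components, $\pi$ projects onto $Z$ with an analogous correction absorbing any $Y_2$-component via $f^{-1}$, and $h$ is the standard null-homotopy of the acyclic pair, vanishing on $Z$ and sending the $Y_2$-summand into $Y_1$ through $-f^{-1}$. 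One then verifies the three required identities: (i) $\iota$ and $\pi$ are chain maps, (ii) $\pi\iota = \mathrm{id}_Z$, and (iii) $\iota\pi - \mathrm{id}_X = \delta h + h\delta$. Each verification, after expanding the block matrices, reduces to one of the relations coming from $\delta^2 = 0$.

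The hard part is not conceptual but purely combinatorial. Because we work in $\cat{Mat}(C)$ with $C$ only preadditive, there are no scalars to rescale: every required cancellation must come from the quadratic identities produced in the first step, and the proliferation of components $\epsilon_1, \epsilon_2, b, c$ and the second arrow $Y_2 \to Y_1$ makes this quite tedious. One must also check that $f^{-1}$ sits in the correct bidegree (namely the negative of that of $f$), but this is automatic since $f$ is an isomorphism in the bigraded category $\cat{Mat}(\cat{Cob})$ of \cref{prop:kobIsGr}, so the resulting $(Z, \zeta - af^{-1}e)$ lies in the same bigraded category as the original complex.
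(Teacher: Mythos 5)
Your proposal is correct and is precisely the standard Gaussian-elimination argument. Note, however, that the paper explicitly states this lemma \emph{without} proof, deferring to Lemma 2.16 of \cite{KWZ19}; so there is no in-paper proof to compare against, but what you describe is exactly the argument appearing in that reference (and originally in Bar-Natan's fast-computations paper). The maps you guess are the right ones: $\iota(z) = (z, -f^{-1}e(z), 0)$, $\pi(z,y_1,y_2) = z - af^{-1}(y_2)$, and $h$ vanishing on $Z \oplus Y_1$ and sending $Y_2 \to Y_1$ by $-f^{-1}$; the verifications (i)--(iii) reduce to the $(Y_2,Z)$ and $(Y_2,Y_1)$ components of $\delta^2 = 0$, as you predict. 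One phrase in your sketch is slightly misleading: $Y_1 \xrightarrow{f} Y_2$ is not literally an acyclic subcomplex of $(X,\delta)$ (there are other arrows into and out of $Y_1, Y_2$), which is exactly why the nontrivial corrections by $f^{-1}$ are needed. Your remark on gradings is correct and, since the paper's bigraded categories are assumed to have vanishing internal differential, the ``precomplex'' subtlety flagged in the paper's own remark following the lemma does not arise.
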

\begin{rmk}[Comment on \cref{lem:cancel}.] There are two small differences between the lemma as stated here and the original statement. First, our bigraded categories are assumed to have vanishing internal differential ($\del$ in \cite[\S2]{KWZ19}). Second, we do not make use of the category of ``precomplexes". Indeed, this is not needed, as the proof never requires that the morphism $f$ interact in some way with the ``pre-differentials" $\epsilon_1$ and $\epsilon_2$. 
\end{rmk}

\begin{lem}[Clean-up Lemma]\label{lem:cleanUp} Let $(X,d)$ be an object in $\cat{Kom}(\cat{Mat}(C))$. Then for any morphism $\eta \in \Hom((X, d), (X, d))$ of bigrading $(0, 0)$ for which $\eta^2$ and $\eta(d\eta - \eta d)$ both vanish, the complex $(X, d)$ is chain-isomorphic to $(X, d + d\eta - \eta d)$. 
\qed
\end{lem}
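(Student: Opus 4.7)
The plan is to produce an explicit chain isomorphism between $(X,d)$ and $(X, d + d\eta - \eta d)$, in the spirit of ``changing basis'' by $\eta$. Since $\eta$ squares to zero, the natural candidate is $\phi := \mathrm{id}_X - \eta$, whose inverse ought to be $\mathrm{id}_X + \eta$; indeed $(\mathrm{id} - \eta)(\mathrm{id}+\eta) = \mathrm{id} - \eta^2 = \mathrm{id}$, and $\eta$ having bigrading $(0,0)$ means $\phi$ is a bigrading-preserving morphism of the underlying object of $\cat{Mat}(C)$. So from the get-go the hypotheses give us a candidate isomorphism $\phi$ of the underlying graded modules, and the entire question is whether $\phi$ intertwines the two differentials.

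Next I would unpack the hypothesis $\eta(d\eta - \eta d) = 0$. Combined with $\eta^2 = 0$, this immediately gives the key identity $\eta d \eta = 0$ (just distribute). With that in hand, the verification is a short computation: writing $d' := d + d\eta - \eta d$ and expanding,
\[ d' \phi = (d + d\eta - \eta d)(\mathrm{id} - \eta) = d + d\eta - \eta d - d\eta - d\eta^2 + \eta d \eta = d - \eta d = \phi d, \]
where $\eta^2 = 0$ kills the $d\eta^2$ term and $\eta d \eta = 0$ kills the last one. This exhibits $\phi$ as a chain map $(X,d) \to (X,d')$, and since $\phi$ is invertible it is a chain isomorphism.

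Along the way I should (or at least could) verify that $d'$ is in fact a differential. One can do this by direct expansion, using the same two identities ($\eta^2 = 0$ and $\eta d \eta = 0$) plus $d^2 = 0$ to kill every term of $(d')^2$. A slicker observation, however, is that since $\phi$ is an isomorphism of the underlying graded object and $\phi d = d' \phi$, we have $(d')^2 = \phi d^2 \phi^{-1} = 0$ automatically. Either way, no additional hypothesis beyond the two stated is needed.

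I do not foresee a real obstacle here: the lemma is an instance of the general principle that conjugation by a unipotent (here ``nilpotent of order two'') automorphism transforms the differential by an inner derivation, and the only subtlety is bookkeeping of the nilpotence conditions on $\eta$ needed to make $\phi$ invertible and to make the error terms disappear. The one mild point worth mentioning is that bigradings are preserved for free because $\eta$ has bigrading $(0,0)$, so $\phi = \mathrm{id} - \eta$ and the new differential $d + d\eta - \eta d$ sit in the same homological and quantum degrees as $\mathrm{id}$ and $d$ respectively, and no degree shifts enter the statement.
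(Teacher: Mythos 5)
Your proof is correct, and it is the standard argument. Note that the paper itself does not supply a proof here---it restates Lemma 2.17 of Kotelskiy--Watson--Zibrowius and immediately puts a \verb|\qed|, deferring to that reference---so there is no "paper's proof" to compare against beyond the cited source, which also proceeds by conjugating the differential by $\mathrm{id} - \eta$. Your identification of $\eta d\eta = 0$ as the operative consequence of the two hypotheses, the computation $d'\phi = \phi d$, and the observation that $(d')^2 = 0$ comes for free from conjugation are all exactly what is needed. One sign point worth being careful about (which you got right): the isomorphism must be $\phi = \mathrm{id} - \eta$ and not $\mathrm{id} + \eta$, since $(\mathrm{id}+\eta)d(\mathrm{id}-\eta)$ produces $d - d\eta + \eta d$, the differential with the opposite sign on the commutator term.
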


\subsection{4-ended tangles and the Bar-Natan algebra $\mc{B}$}\label{sec:BNAlgebra}

We restrict now our attention to $\cat{Cob}_{/l}(4)$, a category whose structure has been clarified in \cite{KWZ19}. We will sketch in this subsection their argument that shows the following. $R$ is still a commutative ring.

\begin{thm}[\S4 of \cite{KWZ19}]\label{thm:skeletalEquiv} There is an equivalence of categories
\[\cat{Kob}(4) \ra \cat{Mod}^\mc{B},\]
where the right-hand side is the category of type D structures over the Bar-Natan algebra $\mc{B}$.
\end{thm}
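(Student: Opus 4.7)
The plan is to recognize $\cat{Mat}(\cat{Cob}_{/l}(4))$ as a category of finitely generated free bigraded right modules over a small algebra, and then identify $\cat{Kob}(4) = \cat{Kom}(\cat{Mat}(\cat{Cob}_{/l}(4)))$ with $\cat{Mod}^\mc{B}$. First I would note that, up to isotopy rel boundary, there are only two crossingless 1-manifolds with a fixed 4-point boundary on $\partial D$, namely $\oRes$ and $\iRes$. Hence the full subcategory of $\cat{Cob}_{/l}(4)$ spanned by $\{\oRes, \iRes\}$ is skeletal, and every object of $\cat{Mat}(\cat{Cob}_{/l}(4))$ is isomorphic to a direct sum of bigrading shifts $\mc{O}[n]\{m\}$ of these two generators.

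Next I would compute the four hom-groups between these generators in $\cat{Cob}_{/l}(4)$. A cobordism between two crossingless tangles is a compact surface in $D \times I$ with prescribed boundary; after applying the sphere and torus $/l$ relations to absorb closed components, any such surface is equivalent to a scalar multiple of a connected cobordism decorated with some number of dots, that is, handles localised in small discs. The 4Tu relation lets me transport dots across the cobordism and reduces matters to a finite list of normal forms in each of the four hom-spaces. Tracking the bigrading via \cref{def:cobQuantGr} (with $h=0$) shows these normal forms span each hom-group as a bigraded $R$-module. Reading off the composition rules on these generators reproduces precisely the presentation of the Bar-Natan algebra $\mc{B}$ together with its two idempotents corresponding to $\oRes$ and $\iRes$. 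This yields a bigraded, fully faithful functor from the skeletal subcategory on $\{\oRes, \iRes\}$ to the two-object category whose morphism algebra is $\mc{B}$.

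Applying $\cat{Mat}$ to this functor produces an equivalence from $\cat{Mat}(\cat{Cob}_{/l}(4))$ to the category of finitely generated free bigraded right $\mc{B}$-modules. Passing to $\cat{Kom}$ on both sides, a chain complex in $\cat{Kob}(4)$ becomes a free bigraded right $\mc{B}$-module (the sum of all the objects appearing in the complex) together with a differential whose matrix entries are morphisms in $\cat{Cob}_{/l}(4)$, i.e.\ elements of $\mc{B}$, satisfying $\delta^2 = 0$. This is exactly the data of a type D structure over $\mc{B}$, so $\cat{Kob}(4)$ and $\cat{Mod}^\mc{B}$ are equivalent.

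The main obstacle is the middle step: executing the normal-form argument for surfaces in $D \times I$ and verifying that the multiplication on the resulting generators agrees with $\mc{B}$ exactly, with no extra relations beyond those forced by $/l$. In particular the 4Tu relation must be applied carefully to slide handles past the caps of $\oRes$ and $\iRes$, and the bigrading shifts in \cref{def:brGr} must be tracked so that the equivalence is bigraded rather than merely $R$-linear. Once that calculation is in hand, the passage from complexes of free modules to type D structures is essentially a matter of unpacking definitions.
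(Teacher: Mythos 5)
Your overall strategy is the same as the paper's: show that $\cat{Mat}(\cat{Cob}_{/l}(4))$ is generated (up to isomorphism) by the two crossingless tangles $\oRes$ and $\iRes$, identify the endomorphism category $\mathrm{End}_{/l}(\oRes \oplus \iRes)$ with $\mc{B}$, and then pass to chain complexes. The hom-group computation you sketch is Naot's structure theorem and is \cref{thm:endo} in the paper; your unpacking of complexes of free modules as type D structures is also correct.

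However, the opening step of your argument has a genuine gap. You write that ``up to isotopy rel boundary, there are only two crossingless 1-manifolds with a fixed 4-point boundary on $\partial D$, namely $\oRes$ and $\iRes$,'' and conclude from this that the subcategory on $\{\oRes, \iRes\}$ is skeletal. This is false: a crossingless 1-manifold in $D$ with 4 boundary points may have arbitrarily many closed circle components, and $\oRes \sqcup \bigcirc$ is \emph{not} isotopic rel boundary to $\oRes$. The objects of $\cat{Cob}_{/l}(4)$ form an infinite set, and the reason $\cat{Mat}(\cat{Cob}_{/l}(4))$ is nonetheless generated by $\oRes$ and $\iRes$ is not a statement about isotopy of 1-manifolds but an algebraic isomorphism that only exists after imposing the $/l$ relations and passing to formal direct sums: the delooping isomorphism of \cref{prop:deLoop}, $\bigcirc \iso \Oset\{-1\} \oplus \Oset\{+1\}$ in $\cat{Mat}(\cat{Cob}_{/l}(B))$. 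This is exactly what supplies essential surjectivity in the paper's proof sketch, and it is the crucial nontrivial input: it depends on the sphere, torus, and 4Tu relations in concert, and without it the theorem fails (for instance, in $\cat{Mat}(\cat{Cob}(4))$ without the $/l$ relations, $\bigcirc$ is not a direct sum of shifts of $\Oset$). You should replace the isotopy claim by an explicit appeal to delooping before the rest of the argument can proceed.

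A smaller inaccuracy: in your description of the hom-groups, ``a scalar multiple of a connected cobordism decorated with some number of dots'' does not match Naot's normal form. Cobordisms between two crossingless 4-ended tangles generically have two open components; the correct statement (which the paper cites from Naot) is that $\mathrm{Hom}_{\cat{Cob}_{/l}}(\mc{O}_1, \mc{O}_2)$ is free over $R[G]$ on cobordisms with no closed components, each open component contractible, and at most one marking per component. This is what reproduces the relations of $\mc{B}$.
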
 

Recall that our tangles are pointed. It follows then from the ``boundary-preserving" property of cobordisms in the definition of $\cat{Cob}$ that cobordisms $C$ in $\cat{Cob}_{/l}$ have a unique connected component that contains the marked point $\ast \in B$ (strictly speaking, $C$ contains $\{\ast\} \times I$, but we can think of $\{\ast\}\times I$ as a single point); indeed, the $/l$ relations cannot eliminate the marked point, as it always lies on a non-closed component of $C$. Thus, we can use the marked cobordism as a touchstone to describe all other cobordisms.

\begin{defn}\label{def:dot} Using the marked point $\ast$, we define operators $G, \blacktriangle$ and $\bullet$ on the morphisms of $\cat{Cob}_{/l}$. The operator $G$ adds a handle to the marked component of a cobordism, depicted below. This is the same as the operator $H$ in \cite{Nao06}, but differs by a sign from the convention in \cite{KWZ19}: 
\[G = H_\text{[Nao]} = -H_\text{[KWZ]}.\]
The operator $\blacktriangle$ is a marking on a component of a cobordism that is short-hand for a tube to the marked component (this operator is called $\bullet$ in \cite{Nao06}), and the operator $\bullet$, likewise, is a marking on a component, and it is given by
\[\bullet = \blacktriangle - G.\]
This latter use of $\bullet$ is consistent with the notation in \cite{KWZ19}. These three operators are depicted in \cref{fig:tubeOperators}.
\begin{figure}[h]
	\centering
	\includegraphics[scale=0.6]{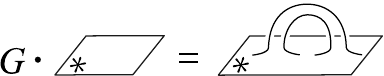}
	\hspace{0.6cm}
	\includegraphics[scale=0.6]{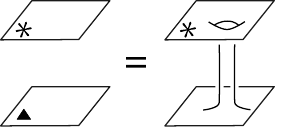}
	\hspace{0.6cm}
	\includegraphics[scale=0.6]{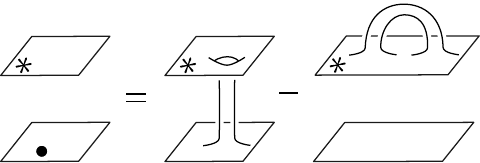}
	\caption{The handle-addition, ``tube-to-mark" and dot cobordisms.}
	\label{fig:tubeOperators}
\end{figure}
\end{defn}

Owing to the following isomorphism, the category $\cat{Mat}(\cat{Cob}_{/l}(4))$ is equivalent to the full subcategory of $\cat{Mat}(\cat{Cob}_{/l}(4))$ generated by the objects $\oRes$ and $\iRes$. This category may be identified with the $\mc{I}$-algebra
\[\mathrm{End}_{/l}\left(\oRes \oplus \iRes\right),\]
where $\mc{I}$ is the subring generated by the identity cobordisms \cite[Observation 2.9]{KWZ19}.

\begin{prop}[\!\! \cite{Nao06}] \label{prop:deLoop} The morphism 
\[L \co \begin{array}{c} \Oset\{-1\}\\ \oplus\\ \Oset\{+1\}\end{array} \xra{\row{\birthTri - G\cdot \birth \hspace{0.4cm}}{\birth}} \bigcirc\]
(in the category $\cat{Mat}(\cat{Cob}_{/l}(B))$), is an isomorphism, with inverse
\[L^{-1} \co \bigcirc \xra{\col{\deathTri \vspace{0.2cm}}{\death}} \begin{array}{c}\Oset\{-1\}\\ \oplus\\ \Oset\{+1\}\end{array}.\]
We call them the looping and delooping isomorphism, respectively.
\end{prop}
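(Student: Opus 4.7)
The plan is a direct verification: check the bigradings of the matrix entries, and then show that the two compositions $L^{-1}\circ L$ and $L\circ L^{-1}$ are the respective identities by applying the $/l$-relations.

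First, I would use \cref{def:cobQuantGr} together with the fact that both $\blacktriangle$ and $G$ decrease the Euler characteristic of any cobordism by $2$ to verify that each of the four components of $L$ and $L^{-1}$ is homogeneous of bidegree $(0,0)$ after the $q$-shifts of \cref{def:gradCat}. So $\birth$ and $\death$ each have intrinsic quantum grading $+1$, while $\birthTri$, $G\cdot\birth$, and $\deathTri$ each have quantum grading $-1$, and one checks component-wise that the shifts on $\Oset\{\pm1\}$ exactly absorb these.

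For $L^{-1}\circ L$, the four matrix entries are closed cobordisms of $\Oset$: spheres built from a birth composed with a death, decorated with various tubes to the marked component and handles on it. I would evaluate each entry using the sphere and handle-moving relations of \cref{fig:locRel}. The off-diagonal entries, carrying no $\bullet$-marks on the sphere, vanish by the sphere relation; the diagonal entries each reduce, after expanding $\bullet=\blacktriangle-G$ and applying the handle-moving relation, to the identity cobordism on $\Oset$.

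For $L\circ L^{-1}$, the sum is a single endomorphism of $\bigcirc$ consisting of two disjoint caps with tubes and handles. Showing that this equals the cylinder $\mathrm{id}_\bigcirc$ is the marked version of Bar-Natan's neck-cutting identity. I would expand $\bullet=\blacktriangle-G$ and apply the handle-moving relation of \cref{fig:locRel} to reorganize the resulting disjoint caps plus tubes and handles into $S^1\times I$. The main obstacle is this last step: the pointed neck-cutting requires careful bookkeeping of the handles and tubes produced on expanding $\bullet$, and is where the sign convention $G=-H_{[\text{KWZ}]}$ of \cref{def:dot} must be used correctly. Everything else is formal.
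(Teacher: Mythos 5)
The paper does not prove this statement; it cites Naot \cite{Nao06}. Your overall plan — verify bidegrees, then show both compositions are the identity using the $/l$-relations — is the natural route and is essentially how the result is proven in the literature. But several of your intermediate claims do not hold up.

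First, the morphisms $L$ and $L^{-1}$ as written are already expressed in terms of $\blacktriangle$ and $G$; they contain no $\bullet$, so "expanding $\bullet=\blacktriangle-G$" is a step that never applies. (You appear to be recalling the dotted-cobordism form of delooping from \cite{KWZ19}; the present statement has already traded $\bullet$ for $\blacktriangle$ and $G$.) Second, your description of the matrix of $L^{-1}\circ L$ is not correct: the two off-diagonal entries do \emph{not} both vanish by the sphere relation. One of them is a plain sphere and does; but the other is $\deathTri\circ\birthTri - G\cdot\deathTri\circ\birth$, a linear combination of spheres decorated by tubes to the marked component. Neither summand is zero — each equals $G$ applied to the identity — and the entry vanishes only because they cancel. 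Conversely, the diagonal entries $\death\circ\birthTri - G\cdot\death\circ\birth$ and $\deathTri\circ\birth$ reduce to the identity simply because a sphere joined to a component by a single tube is that component's cylinder, plus the sphere relation on the $G\cdot\death\circ\birth$ term; no handle-moving is needed there. The only place the handle-moving ($4$Tu) relation genuinely enters is your final step, $L\circ L^{-1}=\mathrm{id}_\bigcirc$, which reduces to the pointed neck-cutting identity $\mathrm{id}_\bigcirc = \birthTri\circ\death + \birth\circ\deathTri - G\cdot\birth\circ\death$. Finally, if you actually run the component-wise bidegree check you advertise against the shifts $\Oset\{-1\}\oplus\Oset\{+1\}$ as written, you will find the column $\col{\deathTri}{\death}$ is not of degree zero — the two rows need to be swapped (a typo in the statement that a careful grading check would have surfaced, and which also matches the ordering under which $L^{-1}\circ L$ comes out as the identity rather than the antidiagonal matrix).
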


To interpret this isomorphism globally, between an object that contains a closed component, such as
\[\mc{O} = \mc{O}' \sqcup \bigcirc,\] 
one extends the disc corbordisms in $L$ and $L^{-1}$ by the identity cobordism $\mc{O'} \times [0,1]$ on the complement of the closed component.

\begin{rmk} The (de)looping isomorphism extends also to an isomorphism in $\cat{Kob}$. 
\end{rmk}

Now we are ready to give an overview of the algebraic structure of $\cat{Kob}(4)$. First, we use our definition of the handle-adding operator $G$ to endow $\mathrm{End}_{/l}\left(\oRes \oplus \iRes \right)$ with the structure of an $R[G]$-algebra. Multiplication of two cobordisms is defined to be either their composition, or 0 if the cobordisms are not composable. Second, as Naot shows in \cite{Nao06}, the hom-set $\mathrm{Hom}_{\cat{Cob}_{/l}}(\mc{O}_1, \mc{O}_2)$ is freely generated over $R[G]$ by cobordisms without closed components and such that every open component is contractible and contains at most one marking (either $\bullet$ or $\blacktriangle$). This describes the left-hand category of the equivalence in \cref{thm:skeletalEquiv}. The right-hand is defined in terms of the path algebra of a quiver; see \cite{CB92} for general quiver notions.

\begin{defn} \label{def:quiv} The Bar-Natan algebra $\mc{B}$ is the path algebra (i.e. the $R$-algebra generated by paths, with formal addition, and with multiplication given by concatenation) over the following quiver
\[\begin{tikzcd}
	\bullet \ar[loop left, "D_\bullet"] \rar[bend right, "S_\bullet"']	&\circ \ar[l, bend right, "S_\circ"'] \ar[loop right, "D_\circ"],
\end{tikzcd}\]
subject to the relations
\[D_\circ S_\bullet = S_\bullet D_\bullet = 0 \hspace{0.5cm}\text{ and } \hspace{0.5cm} D_\bullet S_\circ = S_\circ D_\circ = 0.\]
We concatenate paths from right to left, so the path $S_\circ S_\bullet$ starts and ends at $\bullet$, for example. We also define the following quantum grading on $\mc{B}$:
\[q(D_\bullet) = q(D_\circ) = -2 \hspace{0.5cm}\text{ and } \hspace{0.5cm} q(S_\bullet) = q(S_\circ) = -1.\]
 The equation $q(xy) = q(x) + q(y)$ extends the grading to to non-zero concatenations of paths. It is notationally economical to also give elements of $\mc{B}$ a constant homological grading of 0. Finally, we let $1_\bullet$ and $1_\circ$ denote the constant paths at $\bullet$ and $\circ$, respectively.
\end{defn}

We sometimes drop subscripts and write $D$ instead of $D_\bullet$ or $D_\circ$, and likewise for $S$, when the context disambiguates between the two. Thus the relations defining $\mc{B}$ can be rewritten compactly as $DS = SD = 0$.

\begin{rmk}[cf. Observation 2.9 in \cite{KWZ19}] The idempotents $\{1_\bullet, 1_\circ\} \subset \mc{B}$ generate a subring $\mc{I}_\mc{B}$, so that $\mc{B}$ is an $\mc{I}_\mc{B}$-algebra. The quiver in \cref{def:quiv} describes the $\mc{I}_\mc{B}$-algebra $\mc{B}$ as a category enriched over $R$-modules, $\cat{Mod}_R$, with two objects, $\bullet$ and $\circ$. The hom-sets are infinitely generated as $R$-modules, but if we let $G_\circ = S_\bullet S_\circ - D_\circ$ and $G_\bullet = S_\circ S_\bullet - D_\bullet$, then we obtain the following description of the hom-sets as free $R[G_\circ]$- or $R[G_\bullet]$-modules:
\[\begin{split}
	\Hom_\mc{B}(\bullet, \bullet) 	&= 1_\bullet \cdot \mc{B} \cdot 1_\bullet = R[G_\bullet]\gp{1_\bullet, D_\bullet}\\
	\Hom_{\mc{B}}(\bullet, \circ) 	&= 1_\circ \cdot \mc{B} \cdot 1_\bullet = R[G_\circ]\gp{S_\bullet}\\
	\Hom_\mc{B}(\circ, \bullet) 	&= 1_\bullet \cdot \mc{B} \cdot 1_\circ = R[G_\bullet]\gp{S_\circ}\\
	\Hom_\mc{B}(\circ, \circ) 		&= 1_\circ \cdot \mc{B} \cdot 1_\circ = R[G_\circ]\gp{1_\circ, D_\circ}.
\end{split}\]
In fact, taking into account the quantum and homological gradings described in \cref{def:quiv}, $\mc{B}$ is enriched over the category of bigraded $R$-modules.
\end{rmk}

As in the above remark, we think of $\mc{B}$ as both an algebra and a bigraded additive category with two objects.

\begin{notn} The category $\cat{Mod}^\mc{B}$ is defined to be $\cat{Kom}(\mc{B})$, and complexes therein are called type D structures over $\mc{B}$.
\end{notn}

\begin{rmk} If our definition of type D structure seems strange, the reader is encouraged to read Proposition 2.13 and Remark 2.14 of \cite{KWZ19}.
\end{rmk}

\begin{thm}[Theorem 4.21 in \cite{KWZ19}] \label{thm:endo} There is an\footnote{This isomorphism is pinned down by the choice of marked endpoint $\ast$. Other choices yield other isomorphisms.} isomorphism of bigraded categories
\[\mc{B} \xra{\sim} \mathrm{End}_{/l}(\oRes\oplus\iRes),\]
given on objects by
\[\bullet 	\mapsto \oRes \hspace{0.5cm} \circ 	\mapsto \iRes\] 
and on morphisms by
\[\begin{split}
	S_\bullet 	\mapsto \iSad  	&\hspace{0.5cm} 	S_\circ	\mapsto \oSad \\
	D_\bullet	\mapsto \iDot 	&\hspace{0.5cm} 		D_\circ	\mapsto \oDot,
\end{split}\]
where we use the cobordism notation in \cref{def:sadNot}. Moreover, under this isomorphism, the $G$-actions are identified.
\end{thm}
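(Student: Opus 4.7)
The plan is to define the claimed functor $F \co \mc{B} \to \mathrm{End}_{/l}(\oRes \oplus \iRes)$ on objects and on the generators $S_\bullet, S_\circ, D_\bullet, D_\circ$ as specified, then check three things in order: (1) that the defining relations of $\mc{B}$ hold in the target, so $F$ is well-defined as an $R$-linear functor; (2) that $F$ is a bigraded bijection on each hom-group; and (3) that $F$ intertwines the two $G$-actions. Throughout, the grading check is cheap: the saddle $\iSad$ has $\chi = -1$ and $|B|=4$, giving $q = -1 = q(S_\bullet)$, and a dot is a handle minus a tube, so $q = -2 = q(D_\bullet)$, matching the gradings in \cref{def:quiv}; homological gradings are all zero on both sides.

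For well-definedness, I would verify the four relations $D_\circ S_\bullet = S_\bullet D_\bullet = D_\bullet S_\circ = S_\circ D_\circ = 0$ by direct picture-calculations in $\cat{Cob}_{/l}(4)$. Each composition is a saddle with a dot placed on either the input or output component. Using $\bullet = \blacktriangle - G$ from \cref{def:dot}, expand the dot as ``tube to marked component'' minus ``handle''; in each of the four cases the surviving cobordism either contains a closed sphere (hence is $0$) or cancels against the handle term after applying the $/l$ relation that trades a handle for a signed sum of neck-cut cobordisms. A careful parsing of which of the components of the saddle cobordism contains the marked string $\{\ast\} \times I$ is what makes each composition vanish.

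For bijectivity, I would apply Naot's normal-form theorem: each hom-group $\Hom_{\cat{Cob}_{/l}}(\mc{O}_1,\mc{O}_2)$ is a free $R[G]$-module on cobordisms with no closed components, every open component contractible, and at most one marking $\bullet$ or $\blacktriangle$ per component. Enumerating these for the four pairs $(\oRes,\oRes), (\oRes,\iRes), (\iRes,\oRes), (\iRes,\iRes)$, one finds exactly two generators in each ``diagonal'' hom-set (the undecorated identity cobordism and a single dot on the unmarked component, which after rewriting $\blacktriangle = \bullet + G$ becomes the $R[G]$-basis $\{1, D\}$) and exactly one generator in each ``off-diagonal'' hom-set (the saddle). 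These are precisely the $R[G_\bullet]$- and $R[G_\circ]$-bases of $\mc{B}$ listed in the remark after \cref{def:quiv}, and $F$ sends generators to generators, so it is a bigraded bijection hom-set by hom-set.

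For the $G$-action, I would check that $F(G_\bullet) = G \cdot \mathrm{id}_{\oRes}$ and $F(G_\circ) = G \cdot \mathrm{id}_{\iRes}$. Recall $G_\bullet = S_\circ S_\bullet - D_\bullet$. The composition of two consecutive saddles on the same resolution is, up to isotopy, the identity cobordism with a single tube attached to the non-marked component, i.e.\ the cobordism $\blacktriangle$ on $\oRes$; by the very definition $\blacktriangle = \bullet + G$, this equals $D_\bullet + G\cdot \mathrm{id}$ in the target, so $F(S_\circ S_\bullet) - F(D_\bullet) = G \cdot \mathrm{id}_{\oRes}$, exactly as desired. The argument for $G_\circ$ is symmetric.

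The main obstacle is step (1): the pictorial bookkeeping of saddle-dot compositions, keeping track of which component of each cobordism carries the marked interval $\{\ast\} \times I$ and how $\bullet = \blacktriangle - G$ interacts with the sphere- and torus-killing /l relations. Once these four local identities are verified, steps (2) and (3) follow essentially from Naot's normal form and the definition of $\bullet$, respectively.
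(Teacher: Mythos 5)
The paper itself does not prove this statement: it is quoted as Theorem~4.21 of \cite{KWZ19} and used as a black box (in the proof sketch of \cref{thm:skeletalEquiv}). So there is no in-paper proof to compare against; I can only assess your argument on its own merits. That said, your three-step plan --- (1)~check the quiver relations in $\cat{Cob}_{/l}(4)$ so the functor is well-defined, (2)~use Naot's normal form to see that it is a bigraded bijection on each hom-group, (3)~match the $G$-actions via $G_\bullet = S_\circ S_\bullet - D_\bullet$ and $\blacktriangle = \bullet + G$ --- is the right one, and steps (2) and (3) as you have described them are correct. Your grading check is also correct.

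The one place where the argument is muddled is step (1), and the issue is not a gap so much as an overcomplication that would send you down the wrong path if you tried to fill in the details. In each of the four compositions $D_\circ S_\bullet$, $S_\bullet D_\bullet$, $D_\bullet S_\circ$, $S_\circ D_\circ$, the image is a dotted identity cobordism pre- or post-composed with a saddle, and the saddle is connected and contains all four boundary arcs, hence the marked interval $\{\ast\}\times I$. So after gluing, the component carrying the $\bullet$-marking is always \emph{the} marked component. Expanding $\bullet = \blacktriangle - G$, the $\blacktriangle$-tube now runs from the marked component to itself, i.e.\ it is literally the handle $G$, and the two terms cancel on the nose. No closed sphere ever appears, and no neck-cutting (4Tu) is invoked --- you only need the observation that a tube-to-marked-component attached \emph{within} the marked component is a handle. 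Dropping the ``sphere'' and ``neck-cut'' branches of your case analysis would make the verification both shorter and correct as stated.

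Two very small further points worth flagging so the write-up is airtight. In step (2), when you enumerate Naot generators for the diagonal hom-sets, you should make explicit that a marking on the \emph{marked} disc contributes nothing new (it is $G\cdot\mathrm{id}$ if interpreted as $\blacktriangle$, and $0$ if interpreted as $\bullet$), so the only nontrivial decoration lives on the unmarked disc; this is why the rank is $2$ and not $3$. In step (3), you should check both $G_\bullet$ and $G_\circ$ rather than appealing to ``symmetry'' --- the two cases are genuinely mirror images and the check is immediate, but the marked arc is not symmetric between $\oRes$ and $\iRes$, so it is worth the two lines.
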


\begin{defn}\label{def:sadNot} A planar diagram consisting of a crossingless tangle $T$ and a red arc connecting two points of $T$ symbolizes an elementary saddle cobordism starting from $T$, with the saddle along the red arc\footnote{The red arc is the core of a 1-handle added in the cell structure determined by a Morse function.}. For example, we have the following, where the left end of the cobordism is the domain, and the right end is the image.
\begin{figure}[h]
	\centering
	\includegraphics[scale=0.3]{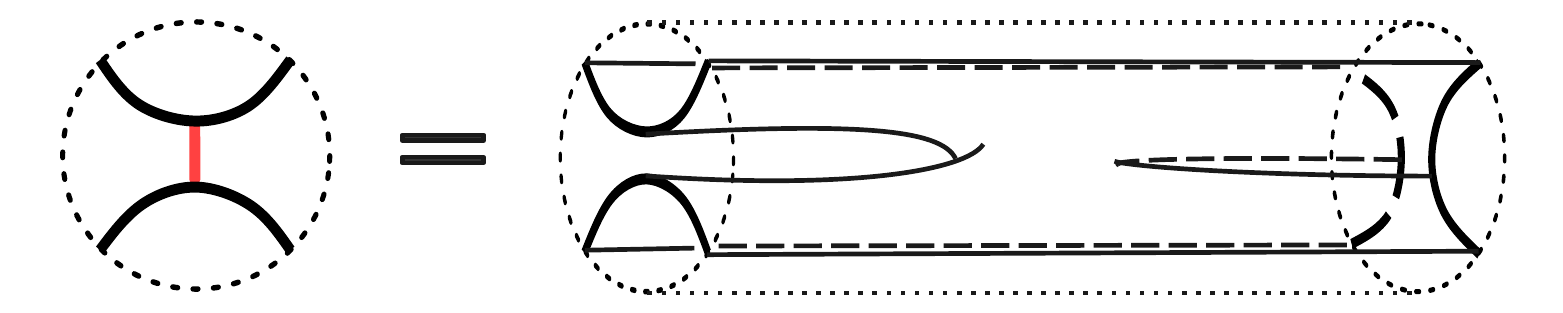}
\end{figure}

Moreover, if $C$ is an elementary saddle cobordism, then we will use $C^k$ to denote the cobordism obtained by taking iterated saddles, as in the following picture:
\begin{figure}[h]
	\centering
	\includegraphics[scale=0.3]{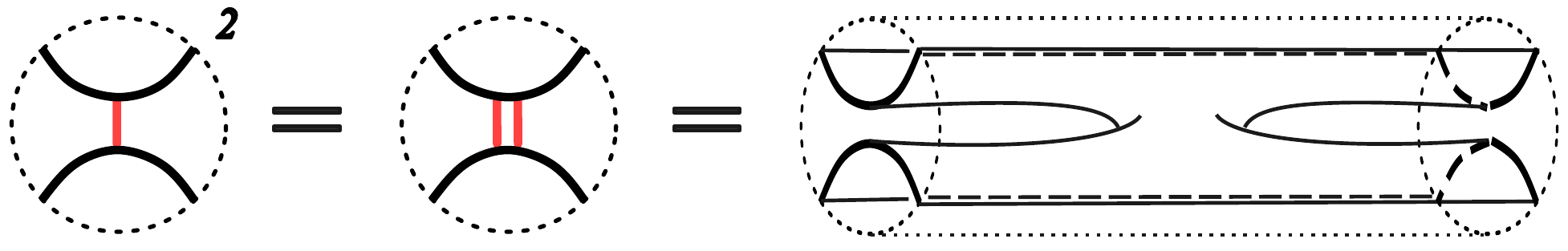}
\end{figure}

Note that in general we have $C^2 \neq C \circ C$; indeed, the saddle $\oSad$ does not have the same domain and codomain, so it cannot be iterated as a morphism in $\cat{Cob}$. Nonetheless, we use this convenient notation, and remark that a pair of parallel red arcs indicates a tube between two sheets in a cobordism. We will also use this notation with dotted cobordisms (\cref{def:dot}), when some component of the tangle is marked with an asterisk $\ast$.  As another example, we have
\begin{figure}[h]
	\centering
	\includegraphics[scale=0.3]{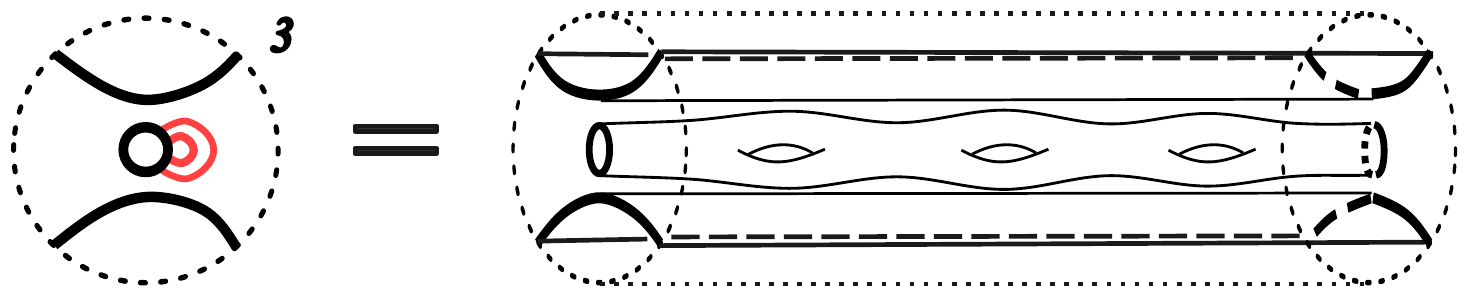}
\end{figure}
\end{defn}

We are now ready to sketch the argument in \cite{KWZ19}.

\begin{proof}[Proof sketch of \cref{thm:skeletalEquiv}] First, by delooping, the inclusion
\[\cat{Kom}\left(\mathrm{End}_{/l}\left(\oRes \oplus \iRes \right)\right) \subset \cat{Kob}(4)\]
is an equivalence of categories. Indeed, it is full and faithful by definition, and delooping provides essential surjectivity. Finally, the isomorphism in \cref{thm:endo} identifies the bigraded additive categories:
\[\mathrm{End}_{/l}\left(\oRes \oplus \iRes\right) \iso \mc{B}.\]
\end{proof}

Finally, we can restate the definition of the tangle invariant in Definition 4.23 of \cite{KWZ19}:

\begin{defn}\label{def:cyrD} \Cref{thm:skeletalEquiv} provides a bigraded isomorphism of categories 
\[\Omega \co \cat{Mod}^\mc{B} \ra \cat{Kom}\left(\cat{End}_{/l}\left(\oRes\oplus\iRes\right)\right.\]
For $T$ a pointed (and, as always, framed) 4-ended tangle, the type D structure $\Rd(T) \in \cat{Mod}^\mc{B}$ is defined as
\[\Rd(T) = \Omega^{-1}(\br{T}_{/l}).\]
Up to homotopy, $\Rd(T)$ is an invariant of the isotopy class of $T$.
\end{defn}

\begin{rmk} Here is how we work with type D structures in practice. Throughout this paper, our type D structures have finite rank, so we describe them as finite labelled directed multi-graphs. The labels on the vertices are $\bullet$ or $\circ$, as they are objects of the category $\mc{B}$ and the arrows are labelled with elements of hom-sets in $\mc{B}$. See for instance \cref{eg:CoRes}.
\end{rmk}


\section{Construction of the operator}\label{sec:construction}

In this section we use Bar-Natan's planar algebra to define a model $\Cb_D$ for the induced operator $\Cb$ on $\cat{Mod}^\mc{B}$. By ``model", we mean precisely that $\Cb_D$ produces explicit representatives of homotopy classes of type D structures.

\subsection{On tangles}

As discussed in the introduction, we define an operator 
\[ \Cb \co \cat{Tan}(4) \ra \cat{Tan}(4)\]
to be the annular tangle $\Cb_T$ depicted in \cref{fig:planAlgOperator}, where the red inner circle is the input (in this section, we will sometimes use the subscript $T$ for the annular tangle). The tangle $\Cb_T$ is oriented so that a cap-trivial input tangle, oriented compatibly with the $\infty$-closure, is taken to a cap-trivial output tangle that is also oriented compatibly with the $\infty$-closure. Finally, $\Cb_T$ has one marked input tangle end and one marked output tangle end. We think of $\Cb$ as a planar algebra operation, which we now review; see \cite[\S5]{BN05}.

Let $s$ be a string of ``in" $(\ina)$ and ``out" $(\outa)$ symbols. The set $\mc{T}^0(s)$ is defined to be the collection of $|s|$-ended oriented tangle diagrams with a marked tangle end, such that the orientations on the tangle ends are given by the string $s$, when reading them counterclockwise, starting from the marked point. Note that the outer marked point will also serve as the basepoint for cobordisms. The set $\mc{T}(s)$ is obtained as a quotient of $\mc{T}^0(s)$ by the Reidemeister moves. Paraphrasing Bar-Natan, a tangle operation 
\[D \co \mc{T}(s_1) \times \dots \mc{T}(s_d) \ra \mc{T}(s)\]
is a crossingless oriented tangle diagram in a big disc with $d$ smaller ``input" discs removed, each of which is marked on the boundary, such that this tangle diagram has $|s_i|$ tangle ends on the $i^{th}$ input circles, oriented according to the string $s_i$, and $|s|$ tangle ends on the big boundary circle, oriented according to the string $s$.

\begin{figure}[h]
	\centering
	\includegraphics[height=.3\linewidth]{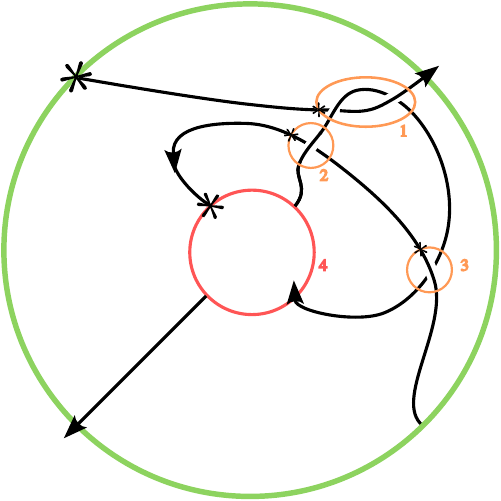}
	\caption{The operator $\Cb$ as a partially filled planar algebra operation $D_{\Cb}(\inputi, \inputii, \inputiii, -)$.}
	\label{fig:planAlgOperator}
\end{figure}

Our operator $\Cb$ is the planar algebra operation 
\[D_{\Cb} \co \mc{T}(\ina\ina\outa\outa) \times \mc{T}(\outa\ina\ina\outa) \times \mc{T}(\outa\outa\ina\ina) \times \mc{T}(\ina\outa\ina\outa) \ra \mc{T}(\ina\outa\ina\outa),\] 
with the first three input discs filled, as in \cref{fig:planAlgOperator}. In other words, $\Cb$ is a function $\mc{T}(\ina\outa\ina\outa) \ra \mc{T}(\ina\outa\ina\outa)$ given by 
\[\Cb(T) = D_\Cb( \inputi, \inputii, \inputiii, T).\]

We will see in the next subsections how this allows us to induce an operator on $\cat{Mod}^\mc{B}$, but first, note that this operator comes in a family. Let $\tau$ be the Dehn-twist tangle operator in \cref{fig:DehnTwist}. Then $\Cb$ is related to $\Cb^0$ by application of $\tau$. If we let $\Cb^n = \tau^n \Cb^0$, then $\Cb = \Cb^{-2}$. We observe finally that the difference between the variously framed operators $\Cb^n$ is captured by a linking number.

\begin{figure}[h]
	\centering
	\includegraphics[height=.2\linewidth]{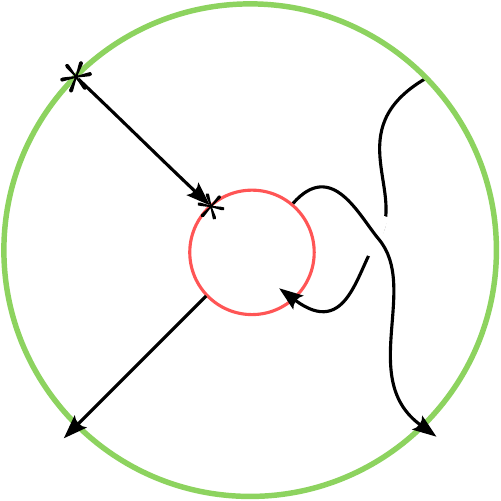}
	\caption{The right half-Dehn twist $\tau$.}
	\label{fig:DehnTwist}
\end{figure}

\begin{defn} Given an oriented 4-ended tangle $T$ without closed components, let its linking number $\lk(T)$ be the number of positive minus the number of negative crossings between the two arcs of $T$.
\end{defn}

\begin{defn} Let $T$ be a 4-ended tangle and let $T'$ be the tangle obtained from $T$ by removing every closed component. The connectivity $\mathrm{conn}(T)$ of $T \in \cat{Tan}(4)$ is the (boundary-preserving) \textit{homotopy} class of $T'$, namely $\connO,\ \connI$ or $\connX$.
\end{defn}

For example, cap-trivial tangles have connectivity $\connX$ or $\connO$. 

\begin{rmk} If $T$ is oriented without closed components and with connectivity $\connO$, then $\lk \Cb^n(T) = n$.
\end{rmk}

\subsection{As a planar algebra operation on $\cat{Kob}$}

In this subsection we use $\Cb_T$ to induce an operator $\Cb_K \co \cat{Kob}(4) \ra \cat{Kob}(4)$. We use in an essential way the planar algebra structure of $\cat{Kob}$ and $\cat{Kob}_{/h}$, which is summarized by the following theorem. 

\begin{thm}[\!\!\cite{BN05}, Theorem 2]\label{thm:BNOpAlg}\ 
	\begin{enumerate} 
		\item The collection $\{\cat{Kob}(k)\}_{\{k \in \N\}}$ has a natural planar algebra structure.
		\item The operations $D$ on $\{\cat{Kob}(k)\}$ send homotopy equivalent complexes to homotopy equivalent complexes, so there is also a planar algebra structure on $\cat{Kob}_{/h}$.
		\item The bracket $\br{-}_{/l}$ induces an oriented planar algebra morphism $\{\mc{T}(s)\} \ra \{\cat{Kob}(s)\}$.
	\end{enumerate}
	Moreover, if $D$ is a $d$-input planar arc diagram where the $i^{th}$ input disc has $k_i$ tangle ends, and if $(\Om^i, d^i) \in \cat{Kob}(k_i)$ is a complex for every $i = 1, \dots, d$, then the complex $(\Om, d) = D(\Om^1, \dots, \Om^d)$ is defined, in each (co)homological grading $r$, by
	\[\begin{split}
	\Om_r	&:= \bigoplus_{r_1 + \dots + r_d = r} D(\Om^1_{r_1}, \dots, \Om^d_{r_d})\\
	d|_{D(\Om^1_{r_1}, \dots, \Om^d_{r_d})}
			&:= \sum_{j=1}^d(-1)^{\sum_{j<i}r_j} D(\Id_{\Om^1_{r_1}}, \dots, d^i, \dots, \Id_{\Om^d_{r_d}}).
	\end{split}\]
\end{thm}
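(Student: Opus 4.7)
The plan is to address the three assertions in order, building the planar algebra structure on complexes from the (already understood) structure on crossingless diagrams and cobordisms. The starting observation is that for the sub-collection of crossingless tangles and cobordism morphisms, a planar arc diagram $D$ with input discs already provides a well-defined insertion operation: one literally places the crossingless diagrams/cobordisms into the input discs and reads off the resulting object in $\cat{Cob}_{/l}$, and the additive closure $\cat{Mat}$ extends this multilinearly to direct sums. The $/l$-relations are local, so insertion is well-defined on the quotient. This gives a planar algebra structure on $\{\cat{Mat}(\cat{Cob}_{/l}(k))\}$.

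To extend to $\{\cat{Kob}(k)\}$, I would take the formula stated in the theorem as the definition of $D(\Om^1, \ldots, \Om^d)$ and verify the three things one needs: (a) $d^2=0$; (b) associativity of nested planar operations; (c) well-definedness (independence of any choice). For (a), expand $d \circ d$ on a summand $D(\Om^1_{r_1}, \ldots, \Om^d_{r_d})$. The diagonal terms $D(\Id, \ldots, (d^i)^2, \ldots, \Id)$ vanish because each $d^i$ is a differential, and the cross terms with $i<j$ pair with the cross terms with $j<i$; a direct sign computation using the Koszul-style prefactor $(-1)^{\sum_{j<i} r_j}$ shows they cancel in pairs, because applying $d^i$ before or after $d^j$ commutes as a planar-algebra insertion while the signs differ by $(-1)^1$. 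For (b), associativity of planar composition of arc diagrams reduces to associativity at the level of crossingless insertion, which we already have; the sign bookkeeping is again routine once one orders the inputs consistently.

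For assertion (2), the task is to show that a chain homotopy equivalence $f^i \co \Om^i \to \Om'^i$ between complexes in the $i$-th slot induces a chain homotopy equivalence $D(\Id, \ldots, f^i, \ldots, \Id)$ after insertion. Since insertion in each fixed homological degree is just a planar multi-linear operation on objects/morphisms of $\cat{Mat}(\cat{Cob}_{/l})$, applying $D$ to the chain homotopy data $(f^i, g^i, h^i)$ produces new chain homotopy data for the inserted complex; the sign identities required for the homotopy relations $d h + h d = \Id - g f$ etc.\ follow from the same Koszul-type sign used in defining $d$. Consequently $D$ descends to $\cat{Kob}_{/h}$.

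For assertion (3), I would observe that Khovanov's cube of resolutions construction is itself planar-algebra compatible at the unnormalized (bracket) level: the resolution diagrams for $D(T^1, \ldots, T^d)$ are exactly insertions of the resolution diagrams of the $T^i$, and the saddle differentials (one per crossing) are in bijective correspondence, with matching signs once one fixes a compatible orderings of crossings. The bigrading shifts in Definition \ref{def:brGr} are additive under insertion, so the isomorphism $\br{D(T^1,\ldots,T^d)}_{/l}\iso D(\br{T^1}_{/l},\ldots,\br{T^d}_{/l})$ holds on the nose, and passing to $\cat{Kob}_{/h}$ uses assertion (2). The main obstacle in the whole argument is the sign bookkeeping in assertion (1)(a)--(b), which propagates into (2) and (3); getting these signs consistent with a chosen total ordering on input discs and on crossings is the only genuinely technical part, the rest being formal multilinear algebra on top of the already-established planar structure on cobordisms.
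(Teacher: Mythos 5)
The paper does not prove this statement itself; it is quoted verbatim from Bar-Natan's paper, so there is no ``paper's own proof'' to compare against. Your sketch is, in substance, a reconstruction of Bar-Natan's argument: the planar operations are multilinear, super-tensor-product-like constructions whose compatibility with differentials, homotopies, and the cube of resolutions all reduce to the same Koszul sign identity, and your sign check (swapping $d^i$ and $d^j$ picks up exactly $(-1)^1$ from the prefactor $(-1)^{\sum_{j<i}r_j}$) is the correct one. Two small caveats worth flagging, not gaps: in assertion (2), inserting a degree $-1$ homotopy $h^i$ into a planar arc diagram with several slots must itself carry a Koszul sign depending on the gradings of the other slots, so ``applying $D$ to the chain homotopy data'' needs that sign to be specified before the identity $dh+hd=\Id-gf$ actually closes up; and in assertion (3), the ``on the nose'' isomorphism of brackets requires a choice of total order on the crossings of the inserted tangles compatible with the order on input discs, which you do acknowledge. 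Both of these are handled carefully in Bar-Natan's original proof, and your sketch correctly identifies them as the only genuinely technical points.
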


We interpret the theorem thus: if $D$ is a tangle diagram, then we can compute a chain homotopy representative of $\br{D}_{/l}$ by cutting $D$ up into subtangles $T_i$, computing chain homotopy representatives of $\br{T_i}_{/l}$, and then piecing these together. The reader who is unfamiliar with \cite{BN05} and wishes to understand the theorem statement is advised to read the first half of that paper. For the reader who is familiar, \cref{sec:notnReview} contains a rapid review of the notation.

\begin{defn} We define the operator 
\[\Cb_K \co \cat{Kob}(4) \ra \cat{Kob}(4)\]
in terms of the 4-input planar arc diagram from the previous subsection:
\[\Cb_K(\Omega) := D_{\Cb}\left(\br{\inputi}_{/l}, \br{\inputii}_{/l}, \br{\inputiii}_{/l}, \Omega\right).\]
\end{defn}

In our case, up to homotopy, we have
\[\begin{split}
	\brl{\inputi}	&= \prescript{\color{ForestGreen}-5}{}{\iRes_{-2}} \xra{\iSS - \iH} \prescript{-3}{}{\iRes_{-1}} \xra{\iSad} \prescript{-2}{}{\oRes_0}\\
	\brl{\inputii}	&= \prescript{1}{}{\iRes_0} \xra{\iSad} \prescript{2}{}{\oRes_1} \\
	\brl{\inputiii}	&= \prescript{1}{}{\oRes_0} \xra{\oSad} \prescript{2}{}{\iRes_1}.
\end{split}\]
See Example 4.27 in \cite{KWZ19} for a proof. We let $(\Om^1, d^1) = \brl{\inputi}, (\Om^2, d^2) = \brl{\inputii}$ and $(\Om^3, d^3) = \brl{\inputiii}$. 


Consider now a complex $(\Om^4, d^4) = \brl{T} \in \cat{Kob}(4)$ that is to be placed in the 4$^{th}$ input disc of $D_\Cb$. By \cref{thm:BNOpAlg}, we have
\[\Cb_K\left(\brl{T}\right)_r = \bigoplus_{r_1 + r_2 + r_3 + r_4 = r} D_{\Cb}(\Om^1_{r_1}, \Om^2_{r_2}, \Om^3_{r_3}, \Om^4_{r_4})\]
and
\[d|_{D(\Om^1_{r_1}, \Om^2_{r_2}, \Om^3_{r_3}, \Om^d_{r_d})} = \sum_{i=1}^4(-1)^{\left(\sum_{j<i}r_j\right)} D(\Id_{\Om^1_{r_1}}, \dots, d^i, \dots, \Id_{\Om^d_{r_d}}).\]

Thus the four components of the differential, corresponding to $i = 1,2,3,4$ in the above equation, are given, respectively, by
\[\begin{split}
D_\Cb \left(d^1, \Id_{\Om^2_{r_2}}, \Id_{\Om^3_{r_3}}, \Id_{\Om^4_{r_4}}\right)					&\co D_\Cb \left(\Om^1_{r_1}, \Om^2_{r_2}, \Om^3_{r_3}, \Om^4_{r_4}\right) \ra D_\Cb\left(\Om^1_{r_1+1}, \Om^2_{r_2}, \Om^3_{r_3}, \Om^4_{r_4}\right),\\
(-1)^{r_1} D_\Cb\left(\Id_{\Om^1_{r_1}}, d^2, \Id_{\Om^3_{r_3}}, \Id_{\Om^4_{r_4}}\right)			&\co D_\Cb \left(\Om^1_{r_1}, \Om^2_{r_2}, \Om^3_{r_3}, \Om^4_{r_4}\right) \ra D_\Cb \left(\Om^1_{r_1}, \Om^2_{r_2+1}, \Om^3_{r_3}, \Om^4_{r_4}\right),\\
(-1)^{r_1+r_2} D_\Cb \left(\Id_{\Om^1_{r_1}}, \Id_{\Om^2_{r_2}}, d^3, \Id_{\Om^4_{r_4}}\right) 		&\co D_\Cb \left(\Om^1_{r_1}, \Om^2_{r_2}, \Om^3_{r_3}, \Om^4_{r_4}\right) \ra D_\Cb \left(\Om^1_{r_1}, \Om^2_{r_2}, \Om^3_{r_3+1}, \Om^4_{r_4}\right),\\
(-1)^{r_1+r_2+r_3} D_\Cb \left(\Id_{\Om^1_{r_1}}, \Id_{\Om^2_{r_2}}, \Id_{\Om^3_{r_3}}, d^4\right) 	&\co D_\Cb \left(\Om^1_{r_1}, \Om^2_{r_2}, \Om^3_{r_3}, \Om^4_{r_4}\right) \ra D_\Cb \left(\Om^1_{r_1}, \Om^2_{r_2}, \Om^3_{r_3}, \Om^4_{r_4+1}\right).
\end{split}\]

Note that the power of $(-1)$ that multiplies the $4^{th}$ component of the differential does not depend on the complex $(\Omega^4, d^4)$.

We keep track of the signs in $\Cb(\Omega^4)$ by using the following diagram. There is a vertex for every triple $(r_1, r_2, r_3)$ for which each $\Om^i_{r_i}$ is nonzero, and there is an edge for each of the first three components of the differential, labelled by the map $(-1)^{\sum_{j<i}r_j}d^i$, for $i = 1, 2, 3$. A vertex's label is coloured magenta if $r_1+r_2+r_3$ is odd, in which case there is a factor of $-1$ multiplying the 4$^{th}$ component of the differential.

\[\begin{tikzcd}[cramped, sep=small]
(-2,0,0) \ar[rr, "D"] \ar[dr, "S"] \ar[dd, "S"]	&													& \mg{(-1,0,0)} \ar[dd, "-S" near start] \ar[dr, "-S"] \ar[rr, "S"]	&													& (0,0,0)	\ar[dr, "S"] \ar[dd,"S" near start]	&							\\
								&\mg{(-2, 1, 0)}  \ar[rr, "D" near start, crossing over]				&												& (-1, 1, 0) \ar[rr,"S" near start, crossing over]						&								& \mg{(0, 1, 0)} \ar[dd, "-S" near start]\\
\mg{(-2, 0, 1)}\ar[rr,"D" near start]\ar[dr,"S"]&													& (-1, 0, 1) \ar[dr,"-S"]\ar[rr,"S" near start]					&													& \mg{(0, 0, 1)} \ar[dr,"S"]				&							\\
								& (-2, 1, 1) \ar[rr,"D"]	\ar[from = uu, "-S" near start, crossing over]	&												& \mg{(-1, 1, 1)} \ar[from = uu,"S" near start, crossing over]\ar[rr, "S"]	&								& (0, 1, 1)
\end{tikzcd}\]

Placing the corresponding smoothings at the vertices $(r_1, r_2, r_3)$ of the above diagram, we can thus represent the operator by the following diagram, (where we also include the bigrading on the top-left object)\footnote{Specifying the bigrading at one vertex of a connected complex fixes it throughout, by \cref{thm:qDegZero}.}:

\[\begin{tikzcd}[cramped, sep=small]
\prescript{-3}{}{\opT_{-2}} \ar[rr, "D"] \ar[dr, "S"] \ar[dd, "S"]	&												& \opT \ar[dd, "-S" near start] \ar[dr, "-S"] \ar[rr, "S"]	&												& \opTL\ar[dr, "S"] \ar[dd, "S" near start]		&						\\
											&\opX  \ar[rr, "D" near start, crossing over]					&										& \opX \ar[rr, "S" near start, crossing over]						&									& \opT \ar[dd, "-S" near start]	\\
\opTt	 \ar[rr, "D" near start]\ar[dr, "S"]					&												& \opTt \ar[dr, "-S"]\ar[rr, "S" near start]			&												& \opT \ar[dr, "S"]						&						\\
											& \opR \ar[rr, "D"]\ar[from=uu, "-S" near start, crossing over]	&										& \opR \ar[from=uu, "S" near start, crossing over]\ar[rr, "S"]	&									& \opRt
\end{tikzcd}\]

Note that the cobordisms in the above diagram are not a priori well-defined intrinsically: they are shorthand for the morphisms in the complexes $\Omega^i$, but one still needs to look back at the planar algebra $D_{\Cb}$ to learn how the $\Omega^i$ sit inside each smoothing. However, with a little thought, one notices that the $S$ morphisms indicated are in fact uniquely determined (except for the ones involving the top-right corner) and the topmost $D$ vanishes, so we only need to be explicit about the three remaining $D$ morphisms. Doing this and delooping the top-right corner results in the following diagram:

\[\begin{tikzcd}[cramped, sep=small]
\prescript{-3}{}{\opT_{-2}} \ar[dr, "S"] \ar[dd, "S"]	&												& \opT \ar[dd, "-S" near start] \ar[dr, "-S"] \ar[rr, "\col{\text{Id}}{\ast}"]	&												& \arrP{\opT}\ar[dr, "\row{0}{\text{Id}}"] \ar[dd, "\row{\ast}{\text{Id}}" near start]		&						\\
											&\opX  \ar[rr, "\opDX" near start, crossing over]					&										& \opX \ar[rr, "S" near start, crossing over]					&									& \opT \ar[dd, "-S" near start]	\\
\opTt	 \ar[rr, "\opDTt" near start]\ar[dr, "S"]					&												& \opTt \ar[dr, "-S"]\ar[rr, "S" near start]			&												& \opT \ar[dr, "S"]						&						\\
											& \opR \ar[rr, "\opDR"]\ar[from=uu, "-S" near start, crossing over]	&										& \opR \ar[from=uu, "S" near start, crossing over]\ar[rr, "S"]	&									& \opRt
\end{tikzcd}\]

Applying the cancellation lemma to the isomorphisms $\text{Id}$ that appeared as a result of delooping yields the following diagram:

\[\begin{tikzcd}[cramped, sep=small]
\prescript{-3}{}{\opT_{-2}} \ar[dr, "S"] \ar[dd, "S"]	&									& 								&											& 				&	\\
									&\opX  \ar[rr, "\opDX"]					&								& \opX \ar[dr, bend left, "-S"]						&				& 	\\
\opTt	 \ar[rr, "\opDTt" near start]\ar[dr, "S"]		&									& \opTt \ar[dr, "-S"]\ar[rr, "S" near start]	&											& \opT \ar[dr, "S"]		&	\\
									& \opR \ar[rr, "\opDR"]\ar[from=uu, "-S" near start, crossing over]&					& \opR \ar[from=uu, "S" near start, crossing over]\ar[rr, "S"]&				& \opRt
\end{tikzcd}\]

For typographic purposes, we relabel the objects in the above diagram according to the dictionary in \cref{fig:resObjs}. 

\begin{figure}[h]
\[\begin{array}{cccccccc}
	\includegraphics[scale=0.12]{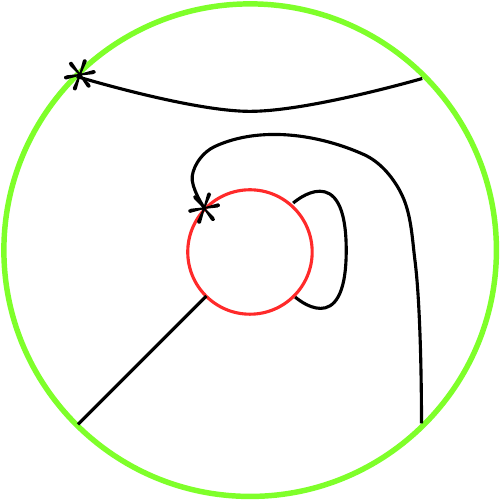}		& \includegraphics[scale=0.12]{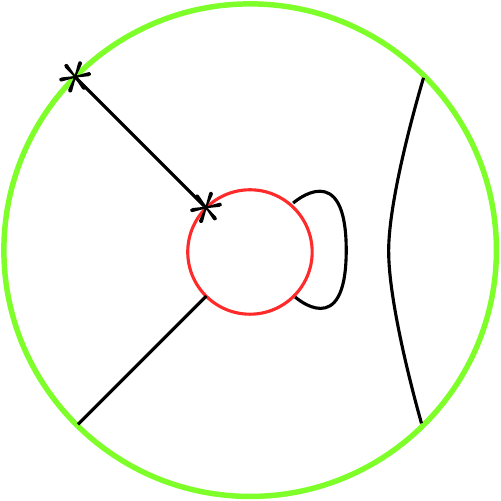}		& \includegraphics[scale=0.12]{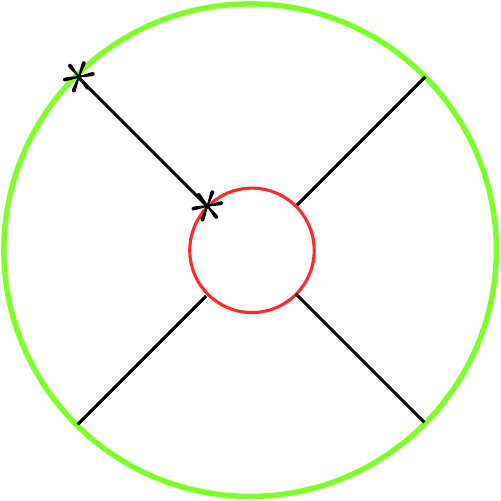}		& \includegraphics[scale=0.12]{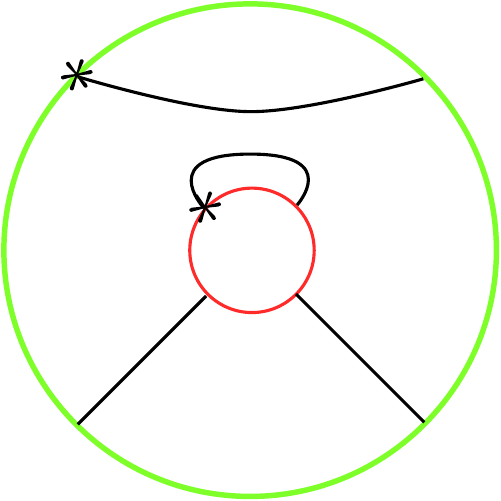}		& \includegraphics[scale=0.12]{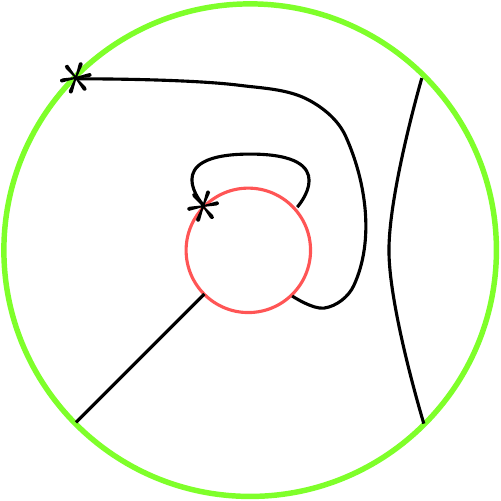}	&\includegraphics[scale=0.12]{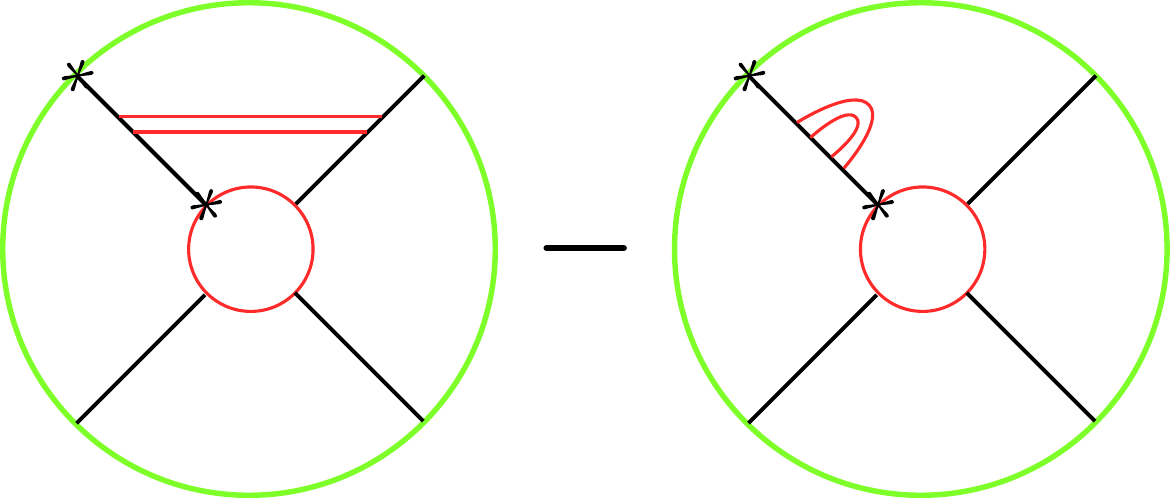}	&\includegraphics[scale=0.12]{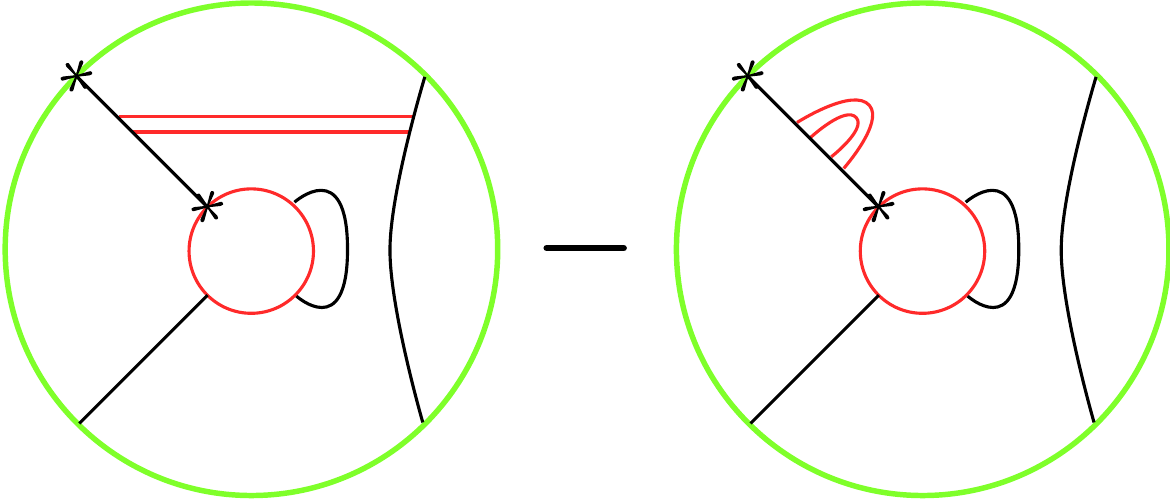}	& \includegraphics[scale=0.12]{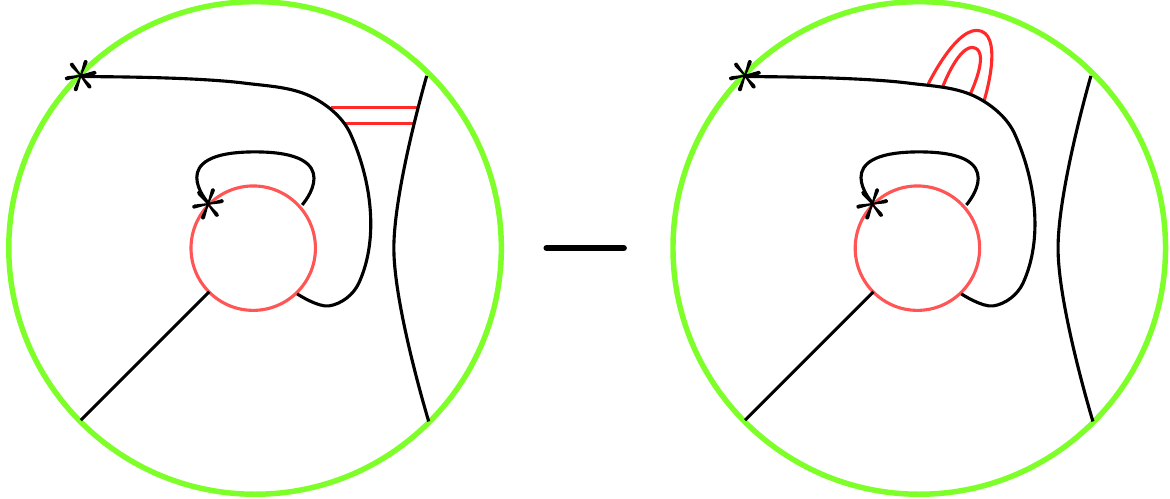} \\
	\vspace{0.1cm}
	\wt{R}		& R			& X			& T			& \wt{T}	&D_X	& D_R	& D_{\wt{T}} 
\end{array}\]
	\caption{Notation for the smoothings in $\br{\Cb_T}$ and the three $D$ morphisms therein.}
	\label{fig:resObjs}
\end{figure}

Rearranging, we have the following isomorphic representative of $\Cb_K$, where again, the magenta coloured objects represent the oddness of the sum $r_1+r_2+r_3$ at the vertex $(r_1,r_2,r_3)$:

\begin{equation}\label{eq:operator}
\Cb_K =
\begin{tikzcd}
\prescript{-3}{}{T_{-2}} \rar["S"] \ar[ddr, "S"]& \prescript{-2}{}{\mg{X}_{-1}}\rar["D_X"] \dar["{-S}"]	& \prescript{0}{}{X_0} \dar["S"] \ar[rdd, "{-S}", bend left = 90]	&		\\
								& \prescript{-1}{}{R_0} \rar["D_R"]						& \prescript{1}{}{\mg{R}_1} \rar["S"]					& \prescript{2}{}{\wt{R}_2}	\\
								& \prescript{-2}{}{\mg{\wt{T}}_{-1}} \uar["S"'] \rar["D_{\wt{T}}"]		& \prescript{0}{}{\wt{T}_0} \uar["{-S}"'] \rar["S"]	& \prescript{1}{}{\mg{T}_1} \uar["S"']
\end{tikzcd}.
\end{equation}

Finally, to compute $\Cb_K(\brl{T})$, we must compute $\Cb_K(C)$ for every cobordism $C$ isomorphic to path algebra element in $\mc{B}$. We do this in \cref{fig:cobTable} below.

\begin{figure}[h] 
\[\begin{array}{c | c c c c c}
	\begin{array}{cc}
			& \text{Obj}\\
	\text{Input}& \end{array}	&	\wt{R}		&	R		&	X	&	T		&	\wt{T}	\\
	\hline
	\oRes				& \oRes 			& \iRes		& \oRes		& \oTangLoop		& \iTangLoop\\
	\iRes					& \oTangLoop		& \iTangLoop	& \iRes		& \oRes			& \iRes		\\
	S_\bullet				& \oSadD			& \iSadL		& \oSad		& \oLpSadLpD		& \iLpSadLpL 	\\
	S_\circ				& \oLpSadLpD		& \iLpSadLpL	& \iSad		& \oSadD			& \iSadL		\\
	D_\bullet				& 0				& 0			& \oDot		& \oLpDotDLp		& \iLpDotLLp	\\
	D_\circ				& \oLpDotLpD		& \iLpDotLpL	& \iDot		& 0				& 0	\\
	SS_\bullet				& \oSadD^2		& \iSadL^2	& \oSad^2		& \oLpSadLpD^2	& \iLpSadLpL^2\\
	SS_\circ				& \oLpSadLpD^2	& \iLpSadLpL^2& \iSad^2		& \oSadD^2		& \iSadL^2	\\
	D_\bullet^k			& 0				& 0			& \oDot^k		& \oLpDotDLp^k	& \iLpDotLLp^k	\\
	D_\circ^k				& \oLpDotLpD^k	& \iLpDotLpL^k	& \iDot^k		& 0				& 0	\\
\end{array}\]
	\caption{Table of outputs of the resolution objects of $\Cb$. }
	\label{fig:cobTable}
\end{figure}

\textbf{A word of warning: the marked points describing some of the cobordisms in \cref{fig:cobTable} should be interpreted as convenient shorthand for linear combinations of cobordisms; the true basepoint is always on the top-left tangle end. For example, one should interpret $\oLpDotLpD$ as being equal to the linear combination $\moLpSSLpD - \moLpSSDD$. This will be important when we deloop, as we will need a consistent basepoint for all outputs of the operator.
}

Let us now perform some sanity checks.

\begin{ex}\label{eg:CoRes} It is easy to see that
\[\Cb_T(\oResOr) = \inputi,\]
and we have
\[\brl{\inputi} = 
\begin{tikzcd}[cramped]
	\prescript{-4}{}{\oRes_{-2}} \rar{\oSad}	& \prescript{-3}{}{\iRes_{-1}} \rar{\iDot}	& \prescript{-1}{}{\iRes_0}
\end{tikzcd}.\]
Let us verify that this agrees with $\Cb_K\left(\prescript{0}{}{\oRes_0}\right)$. According to \cref{eq:operator} and  \cref{fig:cobTable}, we have
\[\Cb_K\left(\prescript{q}{}{\oRes_h}\right) = 
\begin{tikzcd}[ampersand replacement=\&, sep=large]
\prescript{q-3}{}{\oTangLoop_{h-2}} \rar["\oLpSadUp"] \ar[ddr, "\oSadLp"']	\& \oRes \dar["-\oSad"]					\& \oRes \dar["\oSad"] \ar[ddr, bend left = 100, "-\oSadU"]	\& \\
										\& \iRes  \rar["\iDot"]						\& \iRes \rar["\iSad"]								\& \oRes\\
										\& \iTangLoop \ar[u, "\iLpSadLpL"']\rar["\iDotLp"]\& \iTangLoop \ar[u, "-\iLpSadLpL"']\rar["\iSadLp"]		\& \oTangLoop \ar[u, "\oLpSadLpD"']
\end{tikzcd}\]

By delooping, we obtain the following isomorphic complex:

\[
\Cb_K\left(\prescript{q}{}{\oRes_h}\right) \iso
	\begin{tikzcd}[ampersand replacement=\&, sep=large]
\arrPGr{\oRes}{h-2}{q-4}{h-2}{q-2} \rar["\oLpSadUp \circ L"] \ar[ddr, "L^{-1} \circ \oSadLp \circ L"']\& \oRes \dar["-\oSad"]									\& \oRes \dar["\oSad"] \ar[ddr, bend left = 100, "-L^{-1}\circ\oSadU"]	\& \\
														\& \iRes  \rar["\iDot"]										\& \iRes \rar["S"]										\& \oRes\\
														\& \arrP{\iRes} \ar[u, "\iLpSadLpL \circ L"']\rar["L^{-1} \circ \iDotLp \circ L"]\& \arrP{\iRes} \ar[u, "-\iLpSadLpL\circ L"']\rar["L^{-1}\circ \iSadLp \circ L"]	\& \arrP{\oRes} \ar[u, "\oLpSadLpD \circ L"']
	\end{tikzcd}\]
By applying the $/l$ relations, we can express all of the cobordisms in the diagram above in terms of the chosen basis of saddles and dots (where, to make sense of the symbols $D$ and $G$, we implicitly assume each top-left tangle end to be marked $\ast$). The result is
\[\begin{tikzcd}[ampersand replacement=\&]
\arrPGr{\oRes}{h-2}{q-4}{h-2}{q-2} \rar["\row{0}{1}"] \ar[ddr, "{\mat{S}{0}{0}{S}}"']		\& \oRes \dar["-S"]							\& \oRes \dar["S"] \ar[ddr, bend left = 100, "{\col{-1}{-G1}}"]	\& \\
										\& \iRes  \rar["D"]							\& \iRes \rar["S"]								\& \oRes\\
										\& \arr{\iRes} \ar[u, "(0\ 1)"']\rar["{\mat{D}{0}{0}{D}}"]	\& \arr{\iRes} \ar[u, "(0\ -1)"']\rar["{\mat{S}{0}{0}{S}}"]		\& \arr{\oRes} \ar[u, "(D\ 1)"']		
\end{tikzcd},\]
where we use the algebra elements $D$ and $S$ to represent honest cobordisms in $\cat{Cob}$. Let us gain notational alignment by applying the isomorphism $\Omega$ in \cref{def:cyrD} to work in the less cumbersome category $\cat{Mod}^\mc{B}$, and note the boxed identity morphisms, which we will soon get rid of:
\[\Om^{-1}\left( \Cb_K\left(\prescript{q}{}{\oRes_{h}}\right)\right) =
\begin{tikzcd}[ampersand replacement=\&]
\arrPGr{\bullet}{h-2}{q-4}{h-2}{q-2}\rar["\row{0}{\color{blue}\boxed{1}}"] \ar[ddr, "{\mat{S}{0}{0}{S}}"']	\& \bullet \dar["-S"]												\& \bullet \dar["S"] \ar[ddr, bend left = 100, "{\col{-1}{-G1}}"]			\& \\
												\& \circ  \rar["D"]												\& \circ \rar["S"]												\& \bullet\\
												\& \arPBb{\circ} \ar[u, "\row{0}{\color{blue}\boxed{1}}"']\rar["{\mat{D}{0}{0}{D}}"]	\& \arPBb{\circ} \ar[u, "\row{0}{\color{blue}\boxed{-1}}"']\rar["{\mat{S}{0}{0}{S}}"]	\& \arPBb{\bullet} \ar[u, "\row{D}{\color{blue}\boxed{1}}"']	
\end{tikzcd}\]
We can apply the cancellation lemma to the boxed morphisms to obtain the homotopy
\[\Om^{-1}\left( \Cb_K\left(\prescript{q}{}{\oRes_{h}}\right)\right) \simeq
\begin{tikzcd}[ampersand replacement=\&]
\prescript{q-4}{}{\bullet_{h-2}} \ar[ddr, "S"']	\&				\& {\color{blue}\boxed{\bullet}} \ar[ddr, bend left = 100, "{\color{blue}\boxed{-1}}"]	\& \\
								\& 				\&									\& \\
								\& \circ \rar["D"]		\& \circ \rar["S"]							\& \bullet 
\end{tikzcd}\]
Applying the cancellation lemma once again results in
\[\begin{tikzcd}[cramped, sep=small] 
\prescript{q-4}{}{\bullet_{h-2}} \rar{S}	& \prescript{q-3}{}{\circ_{h-1}} \rar{D}	& \prescript{q-1}{}{\circ_h},
\end{tikzcd}\]
which, after setting $q=h=0$, is isomorphic under $\Om$ to
\[\begin{tikzcd}[cramped]
	\prescript{-4}{}{\oRes_{-2}} \rar{\oSad}	& \prescript{-3}{}{\iRes_{-1}} \rar{\iDot}	& \prescript{-1}{}{\iRes_0},
\end{tikzcd}\]
as expected. Note that it would not have been possible to apply the cancellation lemma in one shot to all of the identity morphisms above because, in that lemma's wording, the collection of maps from $Y_1$ to $Y_2$ would have contained an extra non-identity morphism (the second vertical $S_\bullet$ in the top row of morphisms), and so the cancellation lemma's hypotheses would not have been satisfied.
\qed
\end{ex}

\begin{ex}\label{eg:CiRes}
Likewise, we can check that
\[\Cb_K\left(\prescript{0}{}{\iRes_0}\right) \simeq 
	\begin{tikzcd}
									& \prescript{0}{}{\iRes_0} \rar["\iDot"]		& \prescript{2}{}{\iRes_1} \rar["\iSad"]		& \prescript{3}{}{\oRes_2}	\\
	\prescript{-3}{}{\oRes_{-2}} \rar["\oSad"]	& \prescript{-2}{}{\iRes_{-1}} \rar["\iDot"]	& \prescript{0}{}{\iRes_0}				&
	\end{tikzcd},
\]
which is the Bar-Natan bracket invariant of 
\[\Cb_T\left(\iResOr\right) = \hopfTangleOr.\]
Again, we compute in $\cat{Mod}^\mc{B}$, and we blue-box the isomorphism that we apply the cancellation lemma to.
\[\begin{split}
\Om^{-1}\left(\Cb_K\left(\prescript{0}{}{\circ_0}\right)\right)	
&= \begin{tikzcd}[ampersand replacement=\&]
\prescript{-3}{}{\bullet_{-2}} \rar["S"] \ar[ddr, "S"]\& {\color{blue}\boxed{\circ}} \rar["D"] \dar["\col{\color{blue}\boxed{-1}}{-SS}"]		\& {\color{blue}\boxed{\circ}} \ar[ddr, "-S", bend left = 100] \dar["\col{\color{blue}\boxed{1}}{SS}"]	\& \\
						\& \arrP{\circ} \rar["{\mat{D}{0}{0}{D}}"]	\& \arrP{\circ} \rar["{\mat{S}{0}{0}{S}}"]				\& \arrP{\bullet} \\
						\& \circ \uar["\col{1}{G1}"'] \rar["D"]		\& \circ \uar["\col{-1}{-G1}"'] \rar["S"]					\& {\color{blue}\boxed{\bullet}} \uar["\col{\color{blue}\boxed{1}}{SS}"'] 
	\end{tikzcd}\\
&\simeq
\begin{tikzcd}[ampersand replacement=\&]
\prescript{-3}{}{\bullet_{-2}} \ar[ddr, "S"]	\&									\&							\& \\
					\& \circ \rar["D"]							\& \circ \rar["S"]					\& \bullet \\
					\& \circ \uar["G1 {\color{blue}-SS}"'] \rar["D"]	\& \circ \uar["-G1 {\color{blue}+SS}"'] 	\&
\end{tikzcd}\\
&=
\begin{tikzcd}[ampersand replacement=\&]
\prescript{-3}{}{\bullet_{-2}} \ar[ddr, "S"]	\&					\&				\& \\
					\& \circ \rar["D"]			\& \circ \rar["S"]		\& \bullet \\
					\& \circ \uar["-D"'] \rar["D"]	\& \circ \uar["D"'] 	\&
\end{tikzcd}\\
\end{split}\]
Finally, an application of the clean-up lemma along the green diagonal arrow in the diagram
\[\begin{tikzcd}[ampersand replacement=\&]
	\prescript{-3}{}{\bullet_{-2}} \ar[ddr, "S"]	\&					\&				\& \\
									\& \circ \rar["D"]			\& \circ \rar["S"]		\& \bullet \\
									\& \circ \uar["-D"'] \rar["D"]	\& \circ \uar["D"'] 	\&
	\ar[Leftarrow, from=3-3, to=2-2, ForestGreen, "-1"']
\end{tikzcd}\]
results in the following complex
\[\begin{tikzcd}[cramped, sep=small]
								& \circ			& \circ \rar["S"]					&\bullet\\
	\prescript{-3}{}{\bullet_{-2}} \rar["S"] 	& \circ \uar["-D"']	&\prescript{0}{}{\circ_0} \uar["D"']	&
\end{tikzcd},\]
which, we remark, is isomorphic to
\[\begin{tikzcd}[cramped, sep=small]
								& \circ			& \circ \rar["S"]				&\bullet\\
	\prescript{-3}{}{\bullet_{-2}} \rar["S"] 	& \circ \uar["D"']	&\prescript{0}{}{\circ_0} \uar["D"']	&
\end{tikzcd}.\]
\qed
\end{ex}

\subsection{On type D structures}\label{sec:typeDOperator}

\begin{defn} The operator $\Cb \co \cat{Mod}^\mc{B} \ra \cat{Mod}^\mc{B}$ is defined as the composite
\[\cat{Mod}^\mc{B} \xra{\Om} \cat{Kom}\left(\cat{End}_{/l}\left(\oRes \oplus \iRes \right) \right) \xra{\Cb_K} \cat{Kob}(4) \ra \cat{Mod}^\mc{B},\]
where the last functor is the equivalence of categories from \cref{thm:skeletalEquiv}.
\end{defn}

This subsection consists in the construction of a particular model, that we call $\Cb_D$, for the operator $\Cb$. Morally, this is nothing other than $\Om^{-1} \circ \Cb_K \circ \Omega$, but this expression does not make sense, since $\Cb_K$ is not guaranteed to land inside the category $\cat{Kom}\left(\cat{End}_{/l}\left(\oRes \oplus \iRes\right)\right)$. We systematically pass to this category by delooping. First, on objects, examples \ref{eg:CoRes} and \ref{eg:CiRes} produce isomorphic representatives for $\Cb_K\left(\oRes\right)$ and $\Cb_K\left(\iRes\right)$, which we take as definitions for $\Cb_D$.

\begin{defn} We define the following objects in $\cat{Mod}^\mc{B}$:
\begin{equation}\label{eq:CObj}
\begin{split}
\Cb_D(\prescript{q}{}{\bullet_h}) &:= 
	\begin{tikzcd}[ampersand replacement=\&]
\begin{array}{c} \prescript{q-4}{}{\bullet_{h-2}} \\ \oplus \\  \prescript{q-2}{}{\bullet_{h-2}}\end{array} \rar["\row{0}{1}"] \ar[ddr, "{\mat{S}{0}{0}{S}}"']	\& \bullet \dar["-S"]								\& \bullet \dar["S"] \ar[ddr, bend left = 100, "{\col{-1}{-G1}}"]	\& \\
											\& \circ  \rar["D"]								\& \circ \rar["S"]										\& \bullet\\
											\& \arrP{\circ} \ar[u, "\row{0}{1}"']\rar["{\mat{D}{0}{0}{D}}"]	\& \arrP{\circ} \ar[u, "\row{0}{-1}"']\rar["{\mat{S}{0}{0}{S}}"]		\& \arrP{\bullet} \ar[u, "\row{D}{1}"']	
	\end{tikzcd}\\
\Cb_D( \prescript{q}{}{\circ_h}) &:= 
	\begin{tikzcd}[ampersand replacement=\&]
	 \prescript{q-3}{}{\bullet_{h-2}} \rar["S"] \ar[ddr, "S"]\& \circ \rar["D"] \dar["\col{-1}{-SS}"]		\& \circ \ar[ddr, "-S", bend left = 100] \dar["\col{1}{SS}"]	\& \\
						\& \arrP{\circ} \rar["{\mat{D}{0}{0}{D}}"]	\& \arrP{\circ} \rar["{\mat{S}{0}{0}{S}}"]				\& \arrP{\bullet} \\
						\& \circ \uar["\col{1}{G1}"'] \rar["D"]		\& \circ \uar["\col{-1}{-G1}"'] \rar["S"]					\& \bullet \uar["\col{1}{SS}"'] 
	\end{tikzcd}
\end{split}
\end{equation}
Let now $z$ be a path in the path algebra $\mc{B}$, i.e. a string of composable $S$ and $D$ morphisms in the category $\mc{B}$, and let $x, y$ be the domain and codomain of $z$, respectively. We define $\Cb_D(z) \co \Cb_D(x) \ra \Cb_D(y)$ to be the morphism making the following diagram commute
\[\begin{tikzcd}
	\Cb_K(\Om(x))	\rar["\Cb_K(z)"] \dar["\iso"]	& \Cb_K(\Om(y)) \dar["\iso"]\\
	\Cb_D(x) \rar["\Cb_D(z)"]					& \Cb_D(y)
\end{tikzcd}\]
\end{defn}

We will now explicitly describe each $\Cb_D(z)$ with the following convention. The morphism $z \in \mc{B}$ induces a slew of cobordisms from $\Cb_D(x)$ to $\Cb_D(y)$. We fix the position in a $3\times4$-grid of each object in $\Cb_T$ according to \cref{eq:operator}, which allows us to display $\Cb(\bullet)$ and $\Cb(\circ)$ in a $3\times4$ grid as well, and we may also denote the collection of morphisms in $\Cb(x \xra{z} y)$ by a $3\times4$ matrix of cobordisms, where the $(i,j)$ entry in $\Cb_D(z)$ is the cobordism from the $(i,j)$ entry of $\Cb_D(x)$ to the $(i,j)$ entry of $\Cb_D(y)$.

{
\newcommand{\loopedMatrixS}{\begin{pmatrix}
		\oLpSadLpD 	& -\oSad		& \oSad		& 0	\\
		0			& \iSadL		& -\iSadL		&  \oSadD \\
		0			& -\iLpSadLpL 	& \iLpSadLpL	& -\oLpSadLpD 
	\end{pmatrix}}
\newcommand{\deLoopedMatrixS}{\begin{pmatrix}
		\oLpSadLpD \circ L	& -\oSad			& \oSad			& 0	\\
		0				& L^{-1}\circ\iSadL	& -L^{-1} \circ \iSadL	& L^{-1} \circ \oSadD \\
		0				& -\iLpSadLpL \circ L	& \iLpSadLpL \circ L	& -\oLpSadLpD \circ L
	\end{pmatrix}}

\begin{prop}\label{prop:CCobs} The image under $\Cb_D$ of a generator $z \in \mc{B}$ is given in the following list.
\begin{equation}\label{eq:CCobs}\begin{split}
\Cb_D(S_\bullet) &=
	\begin{pmatrix}
		\row{D}{1}		& -S			& S			& 0	\\
		0			& \col{1}{G1}	& \col{-1}{-G1}	& \col{1}{SS} \\
		0			& \row{0}{-1}	& \row{0}{1}	& \row{-D}{-1}
	\end{pmatrix} \\
\Cb_D(S_\circ) &= 
	\begin{pmatrix}
		\col{1}{SS}	& -S			& S			& 0 \\
		0			& \row{0}{1}	& \row{0}{-1}	& \row{D}{1} \\
		0			& \col{-1}{-G1}	& \col{1}{G1}	& \col{-1}{-SS}
	\end{pmatrix} \\
\Cb_D(D_\bullet) &= 
	\begin{pmatrix}
		\mat{SS}{-1}{0}{D}	& -D				& D				& 0 \\
		0				& 0				& 0				& 0 \\
		0				& \mat{-G1}{1}{0}{0}	& \mat{G1}{-1}{0}{0}	& \mat{-SS}{1}{0}{-D}
	\end{pmatrix} \\
\Cb_D(D_\circ) &=
	\begin{pmatrix}
		0		& -D				& D				& 0 \\
		0		&\mat{-G1}{1}{0}{0}	&\mat{G1}{-1}{0}{0}	& \mat{-SS-D}{1}{0}{0} \\
		0		& 0				& 0				& 0
	\end{pmatrix} \\
\Cb_D(SS_\bullet) &= 
	\begin{pmatrix}
		\mat{D}{1}{0}{SS}	& -SS			& SS				& 0 		\\
		0				& G1				& -G1			& SS+D 	\\
		0				& -\mat{0}{1}{0}{G}	& \mat{0}{1}{0}{G}	& -\mat{D}{1}{0}{SS}
	\end{pmatrix} \\
\Cb_D(SS_\circ) &= 
	\begin{pmatrix}
		SS+D		& -SS				& SS				& 0 \\
		0			&\mat{0}{1}{0}{G1}	&\mat{0}{-1}{0}{-G1}	& \mat{D}{1}{0}{SS} \\
		0			& -G1				& G1				& -SS-D
	\end{pmatrix}
\end{split}\end{equation}

More generally\footnote{It is not hard to also compute $\Cb_D(z^k)$ for the other generators $z$ of $\mc{B}$, but, in view of \cref{lem:capTrivialConstraints}, we do not need them in this work, although they would be necessary to give a full description of the bimodule.}, for $k>1$, we have:
%
%
%
%

\[\Cb_D(D_\bullet^{k}) = 
	\begin{cases}
	\DToThekEven	& k \text{ even}\\
	\DToThekOdd	& k \text{ odd}
	\end{cases}
\]

\begin{proof} 

We will only prove the result for $\Cb_D(S_\bullet)$, as a warm up, and for $\Cb_D(D_\bullet^k)$, since since all the minutiae of the computations are illustrated in this last case.

First, by directly dropping the cobordisms in \cref{fig:cobTable} inside the operator description in \cref{eq:operator}, we see that 
\[\Cb_K\big(\oRes \xra{\oSad} \iRes \big) =  \Cb_K\big(\oRes\big) \xra{\begin{pmatrix}
		\oLpSadLpD 	& -\oSad		& \oSad		& 0	\\
		0			& \iSadL		& -\iSadL		&  \oSadD \\
		0			& -\iLpSadLpL 	& \iLpSadLpL	& -\oLpSadLpD 
	\end{pmatrix}} \Cb_K\big(\iRes\big).\]
To obtain the representatives $\Cb_D(\bullet)$ and $\Cb_D(\circ)$ in \cref{eq:CObj}, we must deloop. We thus have the following commutative diagram, where the vertical maps are isomorphisms induced by delooping

\[\begin{tikzcd}[column sep=14em, row sep=5em]
\Cb_K\big(\oRes\big) \rar["\loopedMatrixS"] \dar["\cong"']	& \Cb_K\big(\iRes\big) \dar["\cong"] \\
\Om(\Cb_D(\bullet)) \rar["\deLoopedMatrixS"]								& \Om(\Cb_D(\circ))
\end{tikzcd}\]

All that is left is to compute all of the cobordisms in our preferred basis, which corresponds directly with the generators of the Bar-Natan path algebra $\mc{B}$:

\[
\oLpSadLpD \circ L 
= \oLpSadLpD \circ \row{\birthTri - G\cdot\birth\hspace{0.1cm}}{\hspace{0.1cm}\birth} 
= \row{\omTri-G \cdot 1_{\oRes}\hspace{0.1cm}}{\hspace{0.1cm}1_{\oRes}} 
\iso \row{SS_\bullet-G1_\bullet}{1_\bullet} = \row{D_\bullet}{1_\bullet}
\]

\[
L^{-1} \circ \iSadL 
= \col{\death \circ \imSadL}{\deathTri \circ \imSadL} 
= \col{1_{\iRes}}{\iH}
\iso \col{1_\circ}{H1_\circ}.
\]

Likewise,

\[\begin{split}
L^{-1} \circ \oSadD 	&= \col{1_{\oRes}}{\omSS} \iso \col{1_\bullet}{SS_\bullet}	\\
\iLpSadLpL \circ L	&= \row{\imTriL - G1_{\iRes} \hspace{0.1cm}}{\hspace{0.1cm} 1_{\iRes}} \iso \row{0}{1_\circ}	\\
\oLpSadLpD \circ L	&= \row{\omTri - G 1_{\oRes} \hspace{0.1cm}}{\hspace{0.1cm} 1_{\oRes}} \iso \row{D_\bullet}{1_\bullet}
\end{split}\]

This completes the computation and we have shown that
\[\Cb_D(S_\bullet) = 
\begin{pmatrix}
		\row{D_\bullet}{1_\bullet}	& -S_\bullet			& S_\bullet			& 0	\\
		0					& \col{1_\circ}{G1_\circ}	& \col{-1_\circ}{-G1_\circ}	& \col{1_\bullet}{SS_\bullet} \\
		0					& \row{0}{-1_\circ}		& \row{0}{1_\circ}		& \row{-D_\bullet}{-1_\bullet}
\end{pmatrix}.
\]

Let us now tackle the computation for $\Cb_D(D_\bullet^k)$. Again, we deloop to obtain the following array of cobordisms
\[\Cb_K(DD_\bullet^k) \cong 
\begin{pmatrix}
	L^{-1} \oLpDotDLp^k \circ L	& - \oDot^k	 				&  \oDot^k 				& 0	\\
	0						& 0							& 0						& 0	\\
	0						& -L^{-1}\circ\iLpDotLLp^k\circ L	& L^{-1}\circ\iLpDotLLp^k\circ L	& -L^{-1} \oLpDotDLp^k \circ L
\end{pmatrix}\]

We compute the three deloopings that appear, recalling our convention/warning following \cref{fig:cobTable}: the true marking (for the purpose of defining $G$ consistently) is always the top-left tangle end.

\[\begin{split}
L^{-1} \circ \oLpDotDLp^k \circ L 
&=  \col{\death \circ \left(\moLpSSLpD - \moLpSSLpLp \right)^k}{\deathTri \circ \left(\moLpSSLpD - \moLpSSLpLp\right)^k} \row{\birthTri - G\cdot\birth\hspace{0.1cm}}{\hspace{0.1cm}\birth} \\
&= \mat{\death \circ (-1)^{k-1}\left(\moLpSSLpD^k - \moLpSSLpLp^k \right) \circ \left( \birthTri - G \cdot \birth \right)}{\death \circ (-1)^{k-1}\left( \moLpSSLpD^k - \moLpSSLpLp^k \right) \circ \birth}{\hspace{0.1cm}\deathTri \circ (-1)^{k-1}\left( \moLpSSLpD^k - \moLpSSLpLp^k \right) \circ \left(\birthTri - G \cdot \birth \right)}{\hspace{0.1cm} \deathTri \circ (-1)^{k-1}\left( \moLpSSLpD^k - \moLpSSLpLp^k \right) \circ \birth}\\
&= \mat{(-1)^{k-1} \left(G^{k-1}\omSS - G \omSSDD^{k-1} - G^{k}1_{\oRes} + G \cdot 1_{\oRes} \sqcup \Sigma_k\right)}{(-1)^{k-1} \left( \omSSDD^{k-1} - 1_{\oRes} \sqcup \Sigma_k \right)}{(-1)^{k-1} \left(G^k\omSS - G^k \omSS + G^{k+1}1_{\oRes} - G^{k+1} \cdot 1_{\oRes} \right)}{(-1)^{k-1} \left(G^{k-1}\omSS - G^k 1_{\oRes} \right)},
\end{split}\]

where we use $\Sigma_g$ to denote the closed orientable surface of genus $g$. It is not hard to see that, in the presence of a marked component, the $/l$ relations imply
\[\Sigma_g = 	\begin{cases}
			 	0		&\text{ if } g \in 2\Z\\
				2G^{g-1}	&\text{ if } g \in 2\Z + 1
			\end{cases}
\]

We also have the following consequence of the 4-tubes (4Tu) relation:
\[ \oSSDD^k = \left(2\oSS - \oH\right)^k \simeq (SS_\bullet+D_\bullet)^k = SS_\bullet^k + D_\bullet^k = 
	\begin{cases}
		G^k1	_\bullet				&\text{ if } k \in 2\Z	\\
		G^{k-1}(SS_\bullet+D_\bullet)	&\text{ if } k \in 2\Z+1
	\end{cases}
\]

We have thus
\[\begin{split}
L^{-1} \circ \oLpDotDLp^k \circ L 
	&\iso \begin{cases}
			\mat{-\left(G^{k-1}SS - G^{k-1}(SS+D) - G^k1 + 0\right)}{-G^{k-2}(SS+D)}{0}{-\left( G^{k-1}SS - G^k1\right)} 	
				&\text{ if }	k \in 2\Z\\
			\mat{\left(G^{k-1}SS - G^k1 - G^k1 + 2G^k1 \right)}{G^{k-1}1- 2G^{k-1}1}{0}{\left( G^{k-1}SS - G^k1\right)} 		
				&\text{ if } k \in 2\Z+1,
	\end{cases}\\
	&= \begin{cases}
		\mat{G^{k-1}SS_\bullet}{-G^{k-2}(SS_\bullet+D_\bullet)}{0}{-G^{k-1}D_\bullet}
			&\text{ if } k \in 2\Z\\
		\mat{G^{k-1}SS_\bullet}{-G^{k-1}1_\bullet}{0}{G^{k-1}D_\bullet}
			&\text{ if } k \in 2\Z+1
	\end{cases}
\end{split}\]

Similarly,

\[\begin{split}
L^{-1} \circ \iLpDotLLp^k \circ L
	&=	\mat{\death \circ \left( \miLpSSLpL - \miLpSSLpLp \right)^k \circ \left(\birthTri - G\cdot\birth\right)}{\death \circ \left( \miLpSSLpL - \miLpSSLpLp \right)^k \circ \birth }{\deathTri \circ \left( \miLpSSLpL - \miLpSSLpLp \right)^k \circ \left(\birthTri - G\cdot\birth\right)}{\deathTri \circ \left( \miLpSSLpL - \miLpSSLpLp \right)^k \circ \birth} \\
	&= \mat{(-1)^{k-1}\left(G^k1_{\iRes} - G^k1_{\iRes} - G^k1_{\iRes} + G 1_{\iRes} \sqcup \Sigma_k \right)}{(-1)^{k-1} \left(G^{k-1}1_{\iRes} - 1_{\iRes} \sqcup \Sigma_k\right)}{0}{0} \\
	&\iso \mat{G^k1_\bullet}{-G^{k-1}1_\bullet}{}{},
\end{split}\]
which completes the computation of $\Cb_D(D^k_\bullet)$.

%
%
\end{proof}
\end{prop}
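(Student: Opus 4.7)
The plan is to carry out each computation mechanically from the definition of $\Cb_D$, which is pinned down by the commutative square
\[\begin{tikzcd}
	\Cb_K(\Om(x)) \rar["\Cb_K(z)"] \dar["\iso"'] & \Cb_K(\Om(y)) \dar["\iso"] \\
	\Om(\Cb_D(x)) \rar["\Cb_D(z)"] & \Om(\Cb_D(y))
\end{tikzcd}\]
where the vertical isomorphisms are the ones produced in Examples~\ref{eg:CoRes} and \ref{eg:CiRes} via delooping the top-right corner of the operator \eqref{eq:operator}. For each generator $z \in \{S_\bullet, S_\circ, D_\bullet, D_\circ, SS_\bullet, SS_\circ, D_\bullet^k\}$, I will first write down $\Cb_K(z)$ as a $3 \times 4$ matrix of morphisms in $\cat{Mat}(\cat{Cob}_{/l}(4))$, then postcompose/precompose with $L$ and $L^{-1}$ at the delooped corner, and finally simplify the entries using the $/l$ relations to express them in the preferred basis $\{1, SS, D, SS+D\}\otimes R[G]$ of $\Hom_\mc{B}$.

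For the first six generators the entries of $\Cb_K(z)$ are read off directly from \cref{fig:cobTable} by placing the listed cobordism at each of the twelve positions of $\Cb_D(\bullet)$ or $\Cb_D(\circ)$, with signs dictated by the sign diagram preceding equation \eqref{eq:operator} (i.e.\ a factor of $(-1)^{r_1+r_2+r_3}$ attached at each magenta vertex, which here becomes the $4^{\text{th}}$ component of the differential so contributes to the entries of $\Cb_D(z)$). The only entries requiring genuine work are those involving the delooped object, where one computes composites such as $L^{-1}\circ C \circ L$ using
\[L = \tbinom{\birthTri - G\cdot \birth}{\birth}, \qquad L^{-1} = \tbinom{\deathTri}{\death}\]
and the reduction of the resulting closed surfaces via $/l$. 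These reductions are short: a single handle reduces via $\omSSDD \iso SS+D$, a single-dot cobordism reduces to a basis element, and otherwise one gets either the entries I expect in the matrices of \eqref{eq:CCobs} or an outright zero.

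The genuinely involved case is $\Cb_D(D_\bullet^k)$ for $k > 1$, where the four nonzero matrix entries each require evaluating closed-surface contributions depending on the parity of $k$. The key inputs are (i) the 4Tu relation which, in the presence of a marked component, yields
\[\omSSDD^k \;\simeq\; (SS_\bullet + D_\bullet)^k \;=\; SS_\bullet^k + D_\bullet^k \;=\; \begin{cases} G^k 1_\bullet & k \text{ even}\\ G^{k-1}(SS_\bullet + D_\bullet) & k \text{ odd}\end{cases}\]
and (ii) the genus formula
\[\Sigma_g \;=\; \begin{cases} 0 & g \text{ even}\\ 2G^{g-1} & g \text{ odd}.\end{cases}\]
The main obstacle is not conceptual but purely combinatorial: expanding $(\moLpSSLpD - \moLpSSLpLp)^k$ inside the delooping sandwich and cancelling terms honestly, keeping careful track of signs, genera, and which sheet each handle is attached to. Once this bookkeeping is completed for the four nonzero matrix entries in each parity class, grouping the results into the two matrices $\DToThekEven$ and $\DToThekOdd$ stated in the proposition is immediate.

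Throughout, the paper's warning about the marked basepoint is vital: the cobordism shorthand in \cref{fig:cobTable} is a linear combination of genuine cobordisms based at the top-left tangle end, and all expansions of $\oLpDotDLp^k$ must be rewritten in that basis before applying $L^{\pm1}$. Provided this bookkeeping is done uniformly across the six short cases and the one long case, the entries match those asserted in \eqref{eq:CCobs}.
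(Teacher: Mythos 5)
Your proposal takes essentially the same approach as the paper's proof: read off $\Cb_K(z)$ from the operator description \eqref{eq:operator} and \cref{fig:cobTable}, conjugate by the delooping isomorphisms $L$, $L^{-1}$ at the top-right corner, and simplify the delooped entries using the $/l$ relations, the 4Tu identity $\omSSDD^k \simeq (SS_\bullet + D_\bullet)^k$, the genus formula $\Sigma_g \in \{0, 2G^{g-1}\}$, and the basepoint convention. Your sketch correctly identifies all the tools the paper deploys; it stops short of carrying out the expansion of $(\moLpSSLpD - \moLpSSLpLp)^k$ and the parity-dependent cancellations, but you flag those as the remaining bookkeeping, which matches the content of the paper's actual computation.
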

}

We can now describe $\Cb_D(\Rd(T))$ as follows. Using type D structure language, let 
\[ (\Rd(T), \delta) = \left(\bigoplus_i x_i ,  \big( \delta_{ij} \big) \right),\]
where $(\delta_{i,j})$ is the matrix of $\mc{B}$-coefficients of the type D differential $\delta$:
\[\delta(x_i) = \sum_{j} \delta_{ij} \otimes x_j.\] 
Let also ${\odot}_i$ be the idempotent label of $x_i$. In other words, ${\odot}_i = \bullet$ precisely when $1_\bullet(x_i) = x_i$, otherwise ${\odot}_i = \circ$. Then $\Cb_D(\Rd(T))$ is the type D structure
\[\left( \bigoplus_i \Cb_D({\odot}_i) , \big( \Cb_D(\delta_{i,j}) \big) \right).\]

Complexes such as the one above are complicated, and there is a unique curve-like complex in each chain homotopy class \cref{def:curvyD}. This is essentially the immersed curve invariant that is discussed in \cref{sec:curves}. Our aim is to describe the curve-like representative of $\Cb_D(\Rd(T))$, for all cap-trivial $T$. For that, we will use the following notation (``F" is for Fukaya): 


\begin{notn} We use the subscript $F$ to denote the following complexes, which are chain homotopic to ones obtained by applying $\Cb_D$.
\[\begin{split}
\Cb_{F^{pre}}(\prescript{q}{}{\bullet_h}) &:= 
	\begin{tikzcd}[ampersand replacement=\&]
	\prescript{q-4}{}{\bullet_{h-2}} \ar[ddr, "S"']	\&				\& \bullet  \ar[ddr, bend left = 100, "-1"]	\&\\
									\& 				\&								\&\\
									\& \circ \rar["D"]		\& \circ  \rar["S"]					\& \bullet
	\end{tikzcd}\\
\Cb_F(\prescript{q}{}{\bullet_h}) &:= 
	\begin{tikzcd}[ampersand replacement=\&]
	\prescript{q-4}{}{\bullet_{h-2}} \ar[ddr, "S"']	\&				\& 		\\
									\& 				\&		\\
									\& \circ \rar["D"]		\& \circ
	\end{tikzcd}\\
\Cb_{F^{pre}}(\circ) &:=
	\begin{tikzcd}[ampersand replacement=\&]
	\prescript{q-3}{}{\bullet_{h-2}} \ar[ddr, "S"]	\&					\&				\& \\
									\& \circ \rar["D"]			\& \circ \rar["S"]		\& \bullet \\
									\& \circ \uar["-D"'] \rar["D"]	\& \circ \uar["D"'] 	\&
	\end{tikzcd}\\
\Cb_F(\circ) &:=
	\begin{tikzcd}[ampersand replacement=\&]
	\prescript{q-3}{}{\bullet_{h-2}} \ar[ddr, "S"]	\&				\&							\& \\
									\& \circ			\& \circ \rar["S"]					\& \bullet \\
									\& \circ \uar["-D"']	\& \prescript{q}{}{\circ_h} \uar["D"'] 	\&
	\end{tikzcd}
\end{split}\]
\end{notn}

\subsection{An example: the cables of the unknot}\label{subsec:unknot}

The (2,1) cable of the $k$-framed unknot is the $(2,2k+1)$ torus knot. Because the tangle associated to the strong inversion on the $(2, 2k+1)$ torus knot is algebraic, its immersed curve invariant may be computed by applying the tensor product method in \cite[Example 4.28]{KWZ19}. Our cabling operator offers an alternative computation. Recall that $\tau$ is the Dehn twist operator in \cref{fig:DehnTwist} and let $T_k$ denote the quotient tangle associated with the $(2,2k+1)$ torus knot. Then we have

\[\begin{split}
\Rd(T_k) = \Cb\left(\Rd\left(\tau^k \oResOr \right)\right) 
		&= 
		\begin{cases} 
			\Cb(\underbrace{\prescript{1}{}{\circ_0} \lra \cdots \xra{D_\circ} \circ}_{|k|} \xra{S_\circ} \prescript{-2k}{}{\bullet_{-k}})		&\text{ if } k < 0	\\
			\Cb(\prescript{-2k}{}{\bullet_{-k}} \xra{S_\bullet} \underbrace{\circ \xra{D_\circ} \cdots \lra \prescript{-1}{}{\circ_0}}_{k})		&\text{ if } k > 0
		\end{cases}	
\end{split}\]

Consider the case $k > 0$: a homotopic representative is given by
\[\begin{split}
\Rd(T_k) 	&\simeq \Cb_D(\prescript{-2k}{}{\bullet_{-k}}) \xra{\Cb_D(S_\bullet)} \Cb_D(\circ) \xra{\Cb_D(D_\circ)} \Cb_D(\circ) \xra{SS}  \Cb_D(\circ) \xra{\Cb_D(D)} \cdots,
\end{split}\]
which is equal to
\begin{figure}[h]
\centering
\includegraphics[scale=0.85]{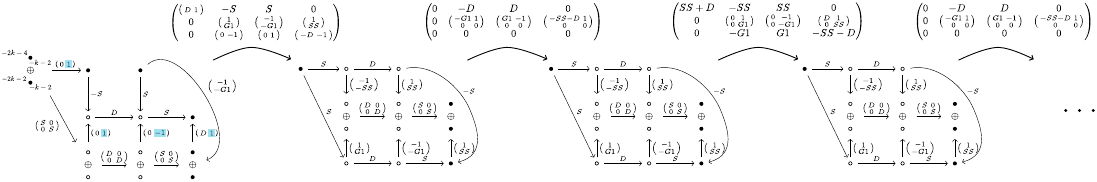}
\end{figure}

We apply the Cancellation Lemma to the highlighted isomorphisms to obtain the following complex, where we again cancel the highlighted isomorphism.
\begin{figure}[h]
\centering
\includegraphics[scale=0.85]{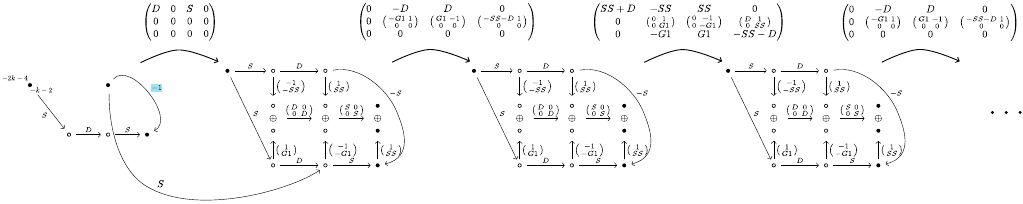}
\includegraphics[scale=0.85]{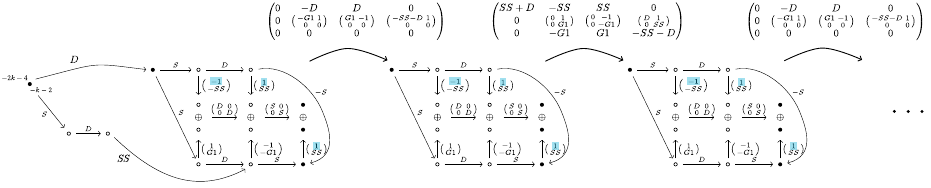}
\end{figure}

We denote the maps induced by cancellation with a lower asterisk, so that the previous computation may be compactly written as
\[\begin{split}
\Rd(T_k)	&\simeq \Cb_{F^{pre}}(\bullet) \xra{{\Cb_D(S_\bullet)}_\ast} \Cb_D(\circ) \xra{\Cb_D(D)} \Cb_D(\circ) \xra{\Cb_D(SS)} \Cb_D(\circ) \xra{\Cb_D(D)} \cdots \\
		&\simeq \Cb_F(\bullet) \xra{{\Cb_D(S_\bullet)}_\ast} \Cb_D(\circ) \xra{\Cb_D(D)} \Cb_D(\circ) \xra{\Cb_D(SS)} \Cb_D(\circ) \xra{\Cb_D(D)} \cdots
\end{split}\]

In principle, cancellation could result in ``long" differentials (nonzero differentials on the second page of a spectral sequence), but these do not occur in this particular example. Finally, by cancelling the isomorphisms highlighted in the previous diagram, we obtain the complex
\[\Cb_F(\bullet) \xra{\Cb_D(\bullet)_*} \Cb_{F^{pre}}(\circ) \xra{ \Cb_{F^{pre}}(D_\circ)} \Cb_{F^{pre}}(\circ) \xra{ \Cb_{F^{pre}}(SS_\circ)} \Cb_{F^{pre}}(\circ) \xra{ \Cb_{F^{pre}}(D_\circ)},\]
where $\Cb_{F^{pre}}(D_\circ)$ denotes the induced map ${\Cb_D(D_\circ)}_*$, and likewise for $\Cb_{F^{pre}}(SS_\circ)$. Explicitly, this type D structure is given by
\begin{figure}[h]
\centering
\includegraphics{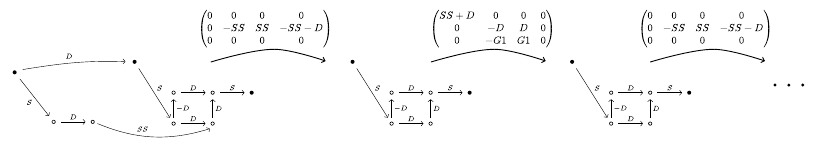}
\end{figure}

We simplify the type D structure further by applying the Clean-Up Lemma with respect to the morphism indicated by the doubled green arrows (it is not hard to check that the hypotheses of the lemma are satisfied):
\begin{figure}[h]
\centering
\includegraphics[scale=0.95]{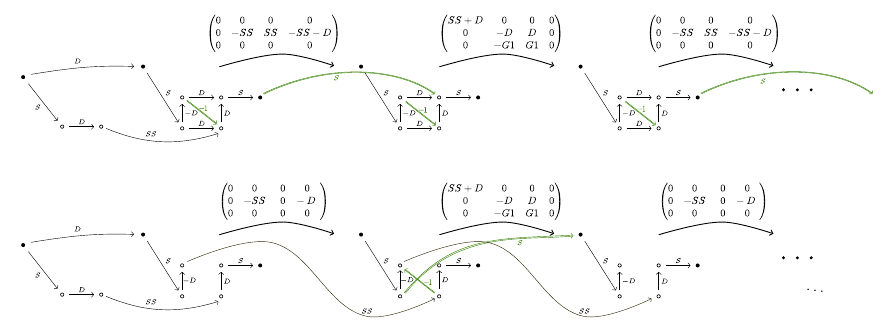}
\end{figure} 

This results in the following complex (minus the doubled green arrows, which we will use to clean up in a second). 
\begin{figure}[h]
\centering
\includegraphics{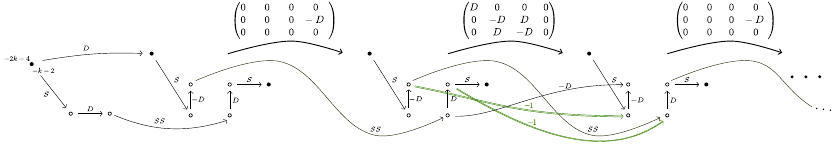}
\end{figure} 

At this point, we see already that
\[\Rd(T_1) \simeq 
\begin{tikzcd}[row sep = small, column sep = small]
\prescript{-6}{}{\bullet_{-3}} \ar[ddr, "S"] \ar[rrrr,"D"]	&			&						&	& \bullet\ar[ddr, "S"]	&				&			&	\\
						&			&						&	&				&\circ			& \circ \rar["S"]	& \bullet \\
						& \circ \rar["D"]	& \circ \ar[rrrr, bend right, "SS"]	&	&				&\circ \uar["-D"']		& \circ \uar["D"']	&
\end{tikzcd},\]
as expected.

\begin{notn} Let us define the following bigraded type D structure
\begin{equation}\label{eq:2Compact}
\prescript{q}{}{C_h} := 
\begin{tikzcd}[column sep = small]
\prescript{q}{}{\bullet_h} \rar["S_\bullet"]\dar["D_\bullet"]	&\circ \rar["D_\circ"]	&\circ \rar["SS_\circ"]	&\circ \rar["D_\circ"]	&\circ \rar["S_\circ"]	&\bullet \dar["D_\bullet"]\\
\bullet \rar["S_\bullet"]				&\circ \rar["D_\circ"]	&\circ \rar["SS_\circ"]	&\circ \rar["D_\circ"]	&\circ \rar["S_\circ"]	&\bullet
\end{tikzcd}.\end{equation}
\end{notn}

Then we also see from our computation so far that
\[\Rd(T_2) \simeq \prescript{-8}{}{C_{-4}} \oplus
\begin{tikzcd}[row sep = small, column sep = small]
\prescript{-4}{}{\bullet_{-2}} \rar["S"]	& \circ  \rar["-D"]	& \circ
\end{tikzcd}.\]

Continuing with the clean-up indicated above, we obtain
\begin{figure}[h]
\centering
\includegraphics{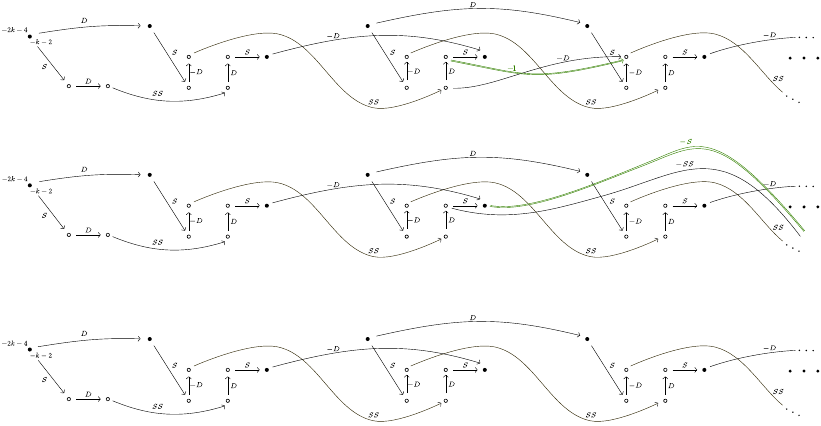}
\end{figure} 

And it is clear that, in general, we have

\[\Rd(T_k) = 
\begin{cases} 
		\underbrace{\prescript{-2k-4}{}{C_{-k-2}} \oplus \prescript{-2k}{}{C_{-k}} \oplus \cdots \oplus \prescript{-10}{}{C_{-5}}}_{\frac{k-1}{2}} \oplus 
		\begin{tikzcd}[row sep = small, column sep = small, ampersand	replacement = \&]
												\&	\bullet \rar["S"]	\& \circ \rar["D"]		\&\circ		\&			\&		\\
			\prescript{-6}{}{\bullet_{-3}} \ar[dr, "S"]\ar[ur,"D"]	\& 				\&				\&			\&			\&		\\
												\&\circ \rar["D"]		\&\circ \rar["SS"]	\&\circ \rar["D"]	\&\circ \rar["S"]	\&\bullet
		\end{tikzcd}
	&\text{ if } k \in 2\N+1	\\
		\underbrace{\prescript{-2k-4}{}{C_{-k-2}} \oplus \prescript{-2k}{}{C_{-k}} \oplus \cdots \oplus \prescript{-8}{}{C_{-4}}}_\frac{k}{2} \oplus 
		\begin{tikzcd}[row sep = small, column sep = small, ampersand	replacement = \&]
			\prescript{-4}{}{\bullet_{-2}} \rar["S"]	\&\circ \rar["D"]	\&\circ
		\end{tikzcd}
	&\text{ if } k \in 2\N
\end{cases}\]

It is no more difficult to compute $\Cb_D(T_k)$ for $k<0$: in this case, we start with the type D structure
\[\xra{\Cb_D(D_\circ)} \Cb_D(\circ) \xra{\Cb_D(SS_\circ)} \Cb_D(\circ) \xra{\Cb_D(D_\circ)} \Cb_D(\circ) \xra{\Cb_D(S_\circ)} \Cb_D(\prescript{-2k}{}{\bullet_{-k}}).\]
cancelling the isomorphisms in $\Cb_D(\bullet)$ results in 
\[\xra{\Cb_D(D_\circ)} \Cb_D(\circ) \xra{\Cb_D(SS_\circ)} \Cb_D(\circ) \xra{\Cb_D(D_\circ)} \Cb_D(\circ) \xra{\Cb_F(S_\circ)} \Cb_F(\prescript{-2k}{}{\bullet_{-k}}),\]
where the induced $\Cb_F(S_\circ)$ is equal to
\[\begin{pmatrix} 1&0&0&0\\ 0&0&0&0\\ 0&-1&1&0 \end{pmatrix}.\]
This type D structure may be cleaned-up similarly to how we proceeded above, as shown in the following figure:

\begin{figure}[h]
\centering
\includegraphics[scale=1.1]{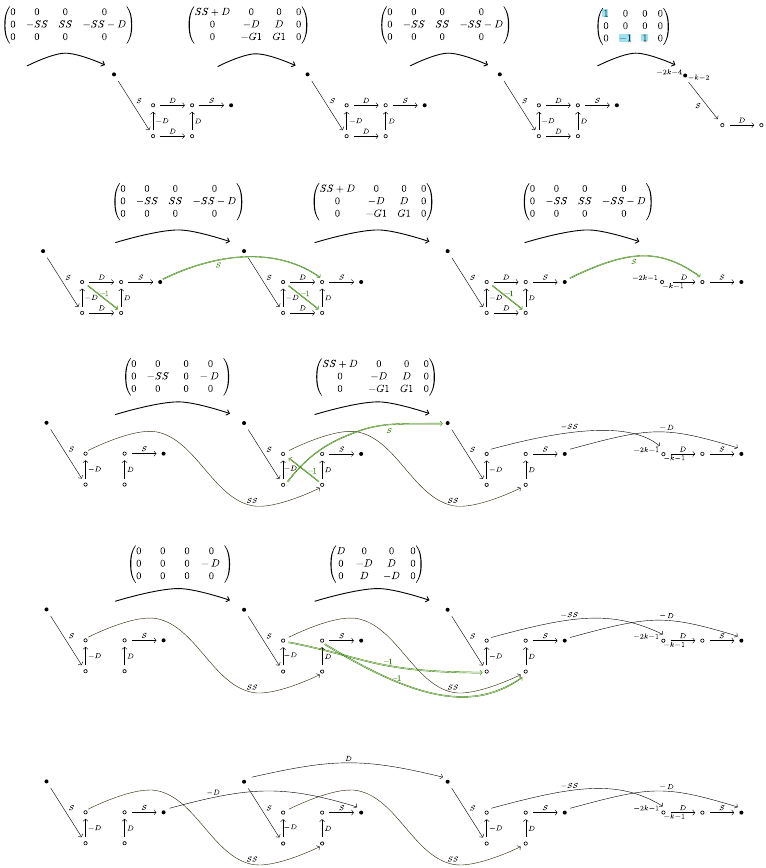}
\end{figure}

We see then that, if $k < 0$, then we have
\[\Rd(T_k) = 
\begin{cases} 
		\begin{tikzcd}[row sep = small, column sep = small, ampersand	replacement = \&]
			\prescript{1}{}{\circ_{0}} \rar["D"]	\&\circ \rar["S"]	\&\bullet
		\end{tikzcd}
		\oplus 
		\underbrace{\prescript{-3}{}{C_{-2}} \oplus \prescript{-7}{}{C_{-4}} \oplus \cdots \oplus \prescript{-2k-8}{}{C_{-k-5}}}_\frac{-k-1}{2} 
	&\text{ if } k \in 2\Z+1	\\
		\begin{tikzcd}[row sep = small, column sep = small, ampersand	replacement = \&]
				\&				\&			\& \prescript{1}{}{\circ_0} \rar["D"]	\& \circ \rar["S"]	\& \bullet\ar[dr,"D"]	\&			\\
				\& 				\&			\&							\&			\&				\&\bullet 		\\
				\&\bullet \rar["S"]	\&\circ \rar["D"]	\&\circ \rar["SS"]				\&\circ \rar["D"]	\&\circ \ar[ur,"S"]	\&
		\end{tikzcd}
		\oplus
		\underbrace{\prescript{0}{}{C_{-1}} \oplus \prescript{4}{}{C_{2}} \oplus \cdots \oplus \prescript{-2k-8}{}{C_{-k-5}}}_{\frac{-k}{2}}  
	&\text{ if } k \in 2\Z
\end{cases}\]

This completes the computation of the curve-like representative of $\Rd(T_k)$ (\cref{def:curvyD}).

\subsection{On Seifert framing}

Let us end this section by fulfilling a promise made in the introduction and show that $\Cb^0$ fixes the class of Seifert-framed cap-trivial tangles. We will do this by checking that the 0-closure of $\Cb^0(T)$ has determinant 0. As mentioned above, the determinant of a link agrees up to sign with either the reduced Jones or the Alexander polynomial evaluated at -1. 
Moreover, the Jones polynomial is the graded Euler characteristic of Khovanov homology:
\[V_L(t) = \sum_{q,h} \mathit{rk}\left(\prescript{q}{}{\wt{\Kh}_h}(L)\right) \cdot (-1)^h t^{q/2},\]
where $\prescript{q}{}{\wt{\Kh}_h}(L)$ is the rank of the reduced Khovanov homology of $L$ in quantum grading $q$ and homological grading $h$, and $L$ is a pointed link. Alternatively, since the Euler characteristic is invariant under quasi-isomorphisms, we can, in place of the rank of the Khovanov homology, use $\mathit{rk}\left(\prescript{q}{}{\wt{\CKh}_h}(L)\right) = \mathit{rk}\left(\prescript{q}{}{\wt{\CBN}^{H=0}_h}(L)\right)$, \cite[\S3]{KWZ19}. It is this latter chain complex that we will use, computed using the following theorem.

\begin{thm}[\!\!\cite{KWZ19}, Proposition 4.31] \label{thm:hom} Let $\mc{L} = \mc{L}(T_1, T_2)$, in the notation of \cref{def:cutPaste}. Then there is a chain homotopy
\begin{equation} \label{eq:hom} 
\wt{\CBN}(\mc{L})\{-1\} \iso \mathrm{Mor}(\Rd(mT_1), \Rd(T_2))
\end{equation}
as bigraded chain complexes of $R[H]$-modules, where $m$ denotes the mirror, the $H$-action on the right is given by $-G$, and the differential on $\mathrm{Mor}(\Rd_1, \Rd_2)$ is given by
\[\del f = f \circ d_1 - (-1)^{h(f)} d_2 \circ f.\] \qed
\end{thm}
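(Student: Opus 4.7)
The plan is to reduce the statement to an identification at the level of the planar algebra structure of $\cat{Kob}$, and then translate that identification into $\cat{Mod}^\mc{B}$ via the equivalence of Theorem \ref{thm:skeletalEquiv}. By part 3 of Theorem \ref{thm:BNOpAlg}, applied to the two-input planar arc diagram that realizes the closure $\mc{L}(\cdot,\cdot)$, the complex $\wt{\CBN}(\mc{L})$ is computed as the planar gluing of $\br{T_1}_{/l}$ with $\br{T_2}_{/l}$. After delooping, both factors live in $\cat{End}_{/l}(\oRes\oplus\iRes)$, so each resolution vertex of the gluing is a crossingless diagram on $S^2$; the $/l$ relations reduce it either to the zero object (idempotents on the two tangles fail to match across the shared boundary) or to an $\oRes$/$\iRes$-compatible pairing of cobordisms. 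The shift $\{-1\}$ tracks the reduced-versus-unreduced normalization, which comes from the marked point $\ast$ providing a basepoint on the link.

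Next, I would identify this skeletal expression for $\wt{\CBN}(\mc{L})\{-1\}$ with $\mathrm{Mor}(\Rd(mT_1),\Rd(T_2))$. The underlying module of the latter is $\bigoplus_{i,j}\Hom_\mc{B}(\iota(x_i),\iota(y_j))$, naturally indexed by pairs of delooped generators, one from each tangle. The key claim is that mirroring $T_1$ implements the correct dualization: under $\Omega$, the type D structure $\Rd(mT_1)$ is the ``opposite" of $\Rd(T_1)$, with arrows reversed, bigradings negated, and the cobordisms along each arrow replaced by their orientation-reversed counterparts. With this identification, the pairing of a generator of $\Rd(mT_1)$ with one of $\Rd(T_2)$ by a hom-space element recovers exactly the cobordism attached to the corresponding vertex of the glued resolution cube, and the direct-sum structure on the morphism complex matches the direct-sum structure on the glued complex.

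Then I would verify that the differentials agree. On the $\wt{\CBN}$ side, the planar algebra formula from Theorem \ref{thm:BNOpAlg} gives the glued differential as a signed sum of the two individual bracket differentials, with the usual Koszul sign. On the morphism complex side, the definition
\[\del f = f\circ d_1 - (-1)^{h(f)} d_2\circ f\]
is precisely the hom-complex differential of two chain complexes, and the sign matches the planar-algebra Koszul sign once arrows in $\Rd(mT_1)$ are reversed. The $R[H]$-module structure on $\wt{\CBN}$ is implemented by the dot cobordism at the basepoint; under the skeletal identification this is the action of $G$ on the morphism complex, and the relation $H = -G$ is forced by the sign conventions fixed in Definition \ref{def:dot}.

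The main obstacle is bookkeeping. Concretely, one must check that (i) mirroring $T_1$ acts on the delooped representative by the explicit arrow-reversal and bigrading-negation described above, including compatibility with the idempotents $1_\bullet, 1_\circ$ and with the $(G,\blacktriangle,\bullet)$ relations on cobordisms; (ii) the signs appearing in the planar-algebra gluing (which come from shuffling the homological degrees of the two factors) match the $-(-1)^{h(f)}$ in $\del$ exactly; and (iii) the overall quantum-grading shift $\{-1\}$, arising from the basepoint arc in the reduced theory, is recovered on the right-hand side from the grading on $\Hom_\mc{B}(\iota(x_i),\iota(y_j))$ summed against the shifts in Definition \ref{def:brGr} for the two brackets. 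Once these compatibilities are confirmed on the skeletal (delooped) model, the chain homotopy equivalence at the level of $\cat{Kob}(\emptyset)$ transports to the stated homotopy equivalence of bigraded $R[H]$-module chain complexes.
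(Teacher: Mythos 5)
This is a cited result (\cite{KWZ19}, Proposition 4.31); the paper provides no proof of its own, so there is nothing in the paper to compare your argument against directly. That said, your outline is the natural route and very likely parallels the one taken in \cite{KWZ19}: the gluing/pairing theorems there are proved precisely by (a) computing the bracket of a glued diagram via the planar algebra formula of \cref{thm:BNOpAlg}, (b) delooping both factors to land in the skeletal category $\mathrm{End}_{/l}(\oRes\oplus\iRes)\cong\mc{B}$, and (c) identifying the result with the internal-hom complex.

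The one place your sketch stops short of a proof is exactly where you say it does: the ``key claim'' that $\Rd(mT_1)$ is obtained from $\Rd(T_1)$ by reversing arrows, negating the bigrading and dualizing cobordisms, and that under this identification the glued object $D(\Omega^1_{r_1},\Omega^2_{r_2})$ delooped matches $\Hom_{\mc{B}}(\odot_1,\odot_2)$ with the correct rank and shift. This is not a triviality: gluing $\oRes$ to $\oRes$ produces two circles (rank $4$ over $R$ after delooping in the unreduced theory), while $\Hom_\mc{B}(\bullet,\bullet)=R[G]\langle 1_\bullet,D_\bullet\rangle$ has rank $2$ over $R[G]$, so the matching of ranks, the reduced-theory quotient at the marked circle, and the $\{-1\}$ shift all interlock and have to be checked in a single calculation rather than asserted separately. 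Similarly, the sign match between the planar Koszul sign $(-1)^{r_1}$ and the $-(-1)^{h(f)}$ in the hom-complex differential depends on precisely how the homological grading of $\Rd(mT_1)$ is negated, and on the convention for $h(f)$ in a graded category as in \cref{def:gradCat}; this is a genuinely delicate sign computation, not purely bookkeeping. Finally, you assert $H=-G$ ``is forced by the sign conventions,'' but the relation between $H_{[\mathrm{KWZ}]}$, the operator $G$ fixed in \cref{def:dot}, and the basepoint action in the reduced theory is itself a convention that must be pinned down; stating that it is forced is circular without exhibiting the dot cobordism at the basepoint acting on $\mathrm{Mor}$ and checking the sign. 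None of these are wrong turns, but until they are carried out the argument is an outline rather than a proof.
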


\begin{prop}\label{prop:detCb} Let $T$ be Seifert-framed cap trivial. Then $\det(\Cb^0(T)(0)) = 0$.
\end{prop}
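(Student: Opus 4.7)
The plan is to use the pairing theorem together with the planar-algebra linearity of $\Cb^0$ to reduce the claim to a single test tangle. Since
\[ \det(L) \;=\; \left|\chi_{gr}\bigl(\wt{\CBN}^{H=0}(L)\bigr)\big|_{t=-1}\right| \]
and Theorem \ref{thm:hom} (applied with $T_1 = \Cb^0(T)$, $T_2 = Q_0 = \oRes$, and using $\Rd(\oRes) = \bullet$) yields
\[ \wt{\CBN}(\Cb^0(T)(0))\{-1\} \;\simeq\; \mathrm{Mor}(\Rd(m\Cb^0(T)), \bullet), \]
it suffices to show that the graded Euler characteristic of this morphism complex, after setting $H = 0$, vanishes at $t = -1$.

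I would then exploit linearity. Since $\Cb^0$ and tangle closure are both planar-algebra operations and the Kauffman bracket is a planar-algebra morphism, the value $V_{\Cb^0(T)(0)}(-1)$ depends only on the evaluation of $T$ at $t = -1$ in the $2$-dimensional Temperley--Lieb module with basis $\{[\oRes], [\iRes]\}$. Writing this evaluation as $x[\oRes] + y[\iRes]$: the cap-trivial condition gives $V_{T(\infty)}(-1) = V_{\mathrm{unknot}}(-1) = 1$, and combining with $V_{\oRes(\infty)}(-1) = 1$ and $V_{\iRes(\infty)}(-1) = 0$ (the latter being the $2$-component unlink evaluation), we obtain $x = 1$. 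The Seifert-framed condition gives $V_{T(0)}(-1) = 0$, and combining with $V_{\oRes(0)}(-1) = 0$ and $V_{\iRes(0)}(-1) = 1$, we obtain $y = 0$. Hence every Seifert-framed cap-trivial tangle has the same evaluation as $\oRes$ in this module, so by linearity $V_{\Cb^0(T)(0)}(-1) = V_{\Cb^0(\oRes)(0)}(-1)$.

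The last step is to verify $\det(\Cb^0(\oRes)(0)) = 0$ directly. The tangle $\oRes$ is the quotient tangle for the trivial strong inversion on the unknot, so by the topological construction of $\Cb^0$ in Figure \ref{fig:21PatternSI}, the tangle $\Cb^0(\oRes)$ is a quotient tangle for the $(2,1)$-cable of the unknot (which is again the unknot). Its $0$-closure is the concrete small link obtained by inserting $\oRes$ into the annular operator and closing with $Q_0$; I would verify the determinant vanishes either by a direct Kauffman-bracket computation on the resulting diagram, or by identifying $\Sigma(\Cb^0(\oRes)(0))$ as $n$-surgery on the unknot for $n$ read off from linking numbers in Figure \ref{fig:21PatternSI}, and checking $n = 0$.

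The main obstacle I expect is the linearity step: one must carefully justify that $\chi_{gr}(\cdot)|_{t=-1}$, after setting $H=0$, really factors through the Kauffman bracket of the input tangle---this ultimately rests on the $q$-grading preservation of Bar-Natan differentials (Theorem \ref{thm:qDegZero}) together with the planar-algebra morphism property of the bracket from Theorem \ref{thm:BNOpAlg}. The test-case computation is then a short diagrammatic verification.
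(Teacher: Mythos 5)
Your approach is genuinely different from the paper's and, with care about normalizations, it works. The paper stays entirely inside the Bar--Natan framework: it pairs $\Rd(T)$ with $\prescript{0}{}{\bullet_0}$ via Theorem~\ref{thm:hom}, observes that each $\bullet$-generator contributes $0$ to the Euler characteristic at $t=-1$ so only the $1_\circ$-part of $\Rd(T)$ matters, rephrases the Seifert-framed condition as a balance in a $\delta$-grading parity on $1_\circ\cdot\Rd(T)$, and then uses that $\Cb^0_D(\bullet) = \bullet$ contributes no $\circ$-generators, so $1_\circ\cdot\Cb^0_D(\Rd(T))$ depends only on $1_\circ\cdot\Rd(T)$ and the parity balance is inherited. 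Your argument instead descends to the classical Kauffman skein module and uses the planar-algebra morphism property there; it is more elementary and reduces the claim to a single explicit model computation, which is an appealing reduction.

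Two points to tighten. First, the linearity step is correct in spirit but should be stated for the unnormalized Kauffman bracket, not the Jones polynomial: $V_L$ is \emph{not} a planar-algebra morphism, because of the framing factor $(-A)^{-3w(D)}$. What saves you is that at $A^4=-1$ this factor is a unit of modulus $1$, and $\delta=-A^2-A^{-2}$ vanishes, so writing $\langle T\rangle = P(A)[\oRes]+Q(A)[\iRes]$, cap-triviality gives $|P(A)|=1$ while the Seifert-framing condition gives $Q(A)=0$; then by planar-algebra linearity of $\langle\cdot\rangle$ one gets $|\langle \Cb^0(T)(0)\rangle| = |\langle \Cb^0(\oRes)(0)\rangle|$, hence equality of determinants. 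Your appeal to Theorem~\ref{thm:qDegZero} and Theorem~\ref{thm:BNOpAlg} for this step is somewhat misdirected: what is actually used is the classical fact that the Kauffman bracket is a planar-algebra morphism valued in the Temperley--Lieb skein module (Bar--Natan's planar algebra machinery is a categorification of this, but you do not need the categorified statement here). Second, the base case $\det(\Cb^0(\oRes)(0))=0$ genuinely needs a direct diagrammatic verification (as you anticipate); the alternative you suggest --- reading off the surgery coefficient from the Seifert longitudes in Figure~\ref{fig:21PatternSI} --- is exactly the ``treacherous'' diagrammatic tracking the paper warns against, so the determinant computation is the safer route. With these adjustments the proof is sound and, arguably, shorter than the paper's.
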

\begin{proof} Since $T$ is Seifert framed, $\det(T(0)) = 0$. Equivalently,
\[\begin{split}
|V_{T(0)}(-1)| = \left| \chi_{gr} \left(\wt{\CBN}^{H=0}(T(0))\right)\big|_{t = -1} \right| = \left| \chi_{gr}\left(\mathrm{Mor}_{\mc{B}_0}(\prescript{0}{}{\bullet_0}, \Rd(T)) \{+1\}\right)\big|_{t=-1}\right| = 0,
\end{split}\]
where $\mc{B}_0$ is the quotient of the algebra $\mc{B}$ by the relations $G_\circ = G_\bullet = 0$. The differential of $\Rd(T)$ does not contribute to the Euler characteristic, so we only need to keep track of the generators. We have 
\[\chi_{gr}\mathrm{Mor}_{\mc{B}_0}(\prescript{0}{}{\bullet_0}, \Rd(T))\{+1\} = \chi_{gr} \mathrm{Mor}_{\mc{B}_0}(\prescript{0}{}{\bullet_0}, 1_\bullet \cdot \Rd(T))\{+1\} + \chi_{gr} \mathrm{Mor}_{\mc{B}_0}(\prescript{0}{}{\bullet_0}, 1_\circ \cdot \Rd(T))\{+1\}.\]
(Recall that $1_\bullet, 1_\circ$ generate the idempotent subring $\mc{I}_\mc{B} \subset \mc{B}$, over which $\Rd(T)$ is a module). Note first that, for every generator $\prescript{q}{}{\bullet_h} \in \Rd(T)$, we have
\[\chi_{gr} \mathrm{Mor}_{\mc{B}_0}(\prescript{0}{}{\bullet_0}, \prescript{q}{}{\bullet_h}) = \chi_{gr}R\gp{\prescript{q}{}{{1_\bullet}_h}, \prescript{q-2}{}{{D_\bullet}_h}} = (-1)^h t^{\frac{q}{2}} + (-1)^h t^\frac{q-2}{2},\]
which vanishes when evaluated at $t=-1$. Thus, only $\mathrm{Mor}_{\mc{B}_0}(\prescript{0}{}{\bullet_0}, 1_\circ \cdot \Rd(T))$ contributes to the determinant. It is easiest to keep track of this contribution by using the $\delta$-grading, which is defined to be $\delta = \frac{q}{2}-h$. Now, for a generator $\prescript{q}{}{\circ_h}$ in $1_\circ \cdot \Rd(T)$, we have
\[\chi_{gr} \left(\mathrm{Mor}_{\mc{B}_0}(\prescript{0}{}{\bullet_0}, \prescript{q}{}{\circ_h})\right)\big|_{t=-1} = (-1)^h t^{\frac{q-1}{2}}\big|_{t=-1} = (-1)^{h + \frac{q-1}{2}} = (-1)^{\delta -\frac{1}{2}}.\]
Since the quantum grading takes values either in $2\Z$ or in $2\Z + 1$, the $\delta$ grading takes values either in $\Z + \frac{1}{2}$ or in $\Z$. In either case, there is a partition of the $\delta$-gradings into two subsets, $Z^\text{odd}$ and $Z^\text{even}$, such that 
\[(-1)^{\delta - \frac{1}{2}} = \begin{cases} \zeta &\text{ if } \delta \in Z^\text{even}\\ - \zeta &\text{ if } \delta \in Z^\text{odd} \end{cases},\]
where $\zeta$ is either $i$ or $1$. It is not hard to see that, in our case, because $T(0)$ is a 2-component link, we have $\zeta = i$, but this is unimportant. The point is that we can also partition $1_\circ \cdot \Rd(T)$ into $\Rd(T)_\circ^\text{odd}$ and $\Rd(T)_\circ^\text{even}$ so that
\[\det(T(0)) = \left| \mathrm{rk} \Rd(T)_\circ^\text{odd} - \mathrm{rk} \Rd(T)_\circ^\text{even} \right|.\]
Thus, $T$ being Seifert framed is equivalent to the statement that $1_\circ \cdot \Rd(T)$ has as many generators in even as in odd $\delta$-grading (for our generalization of ``even" and ``odd"). To complete the proof of the proposition, we only need to show that $\Cb^0$ also results in a type D structure with this property. We have not computed the planar algebra operation nor the operator $\Cb^0_D$ on $\cat{Mod}^\mc{B}$ induced by $\Cb^0$, but they both exist, and it is clear that
\[\Cb^0_D(\bullet) = \bullet.\]
Thus, the module $1_\circ \cdot \Cb^0_D(T)$ depends only on $1_\circ \cdot \Rd(T)$. It follows then that, if $\Rd(T)$ has the same number of generators in even and odd $\delta$ grading, then so does the type D structure $\Cb^0_D(T)$.
\end{proof}

By the Montesinos trick, the proposition implies that $\Cb^0(T)$ is Seifert framed.


\section{Structural properties} \label{sec:properties}

In this section we present a fairly tight set of constraints on the Bar-Natan invariant of a cap-trivial tangle $T$, and we then apply these to prove structural properties of $\Cb(T)$. For that, we make use of the immersed curve theory in \cite{KWZ19}, so we start by providing its key aspects in the following subsection.

\subsection{Immersed curve invariants of cap-trivial tangles}\label{sec:curves}

We now need to introduce the immersed curve theory of \cite{KWZ19}. The following results require that we take coefficients in a field $\k$. Recall that $S^2_{4,*}$ is 4-punctured $S^2$, with one puncture marked $\ast$. 

\begin{defn} We treat the special puncture $* \in S^2_{4,*}$ as a basepoint and let $S^1_*$ be a pointed circle. A \textbf{parametrization} of $S^2_{4,*}$ consists of the following
\begin{enumerate}
	\item A choice of two pointed embeddings $i_\circ, i_\bullet \co S^1_* \ra S^2_{4,*}$ such that $S^2_{4,*} \setminus \mathrm{im}(i_\bullet) \cup \mathrm{im}(i_\circ)$ is a disjoint union of three punctured discs $D_1, D_2, D_3$.
	\item An orientation-preserving embedding $i$ of the quiver from \cref{def:quiv} (thought of as a 1-dimensional CW complex)
	 into $S^2_{4,*}$, so that $i(\bullet) \in \mathrm{im}(i_\bullet)$, $i(\circ) \in \mathrm{im}(i_\circ)$ and each arrow embeds orientation-preservingly into one of the discs $D_i$.
\end{enumerate}
See \cref{fig:S2param}. We call the image of $i_\bullet$ (resp. $i_\circ$) the $\bullet$ (resp. $\circ$) parametrizing arc.
\end{defn}

\begin{figure}[h]
	\labellist
	\hair 2pt
	\pinlabel $D_\bullet$ at 36 16
	\pinlabel $S_\circ$ at 71 75
	\pinlabel $D_\circ$ at 122 100
	\pinlabel $S_\bullet$ at 121 10
	\endlabellist
	\centering
	\includegraphics[scale=0.8]{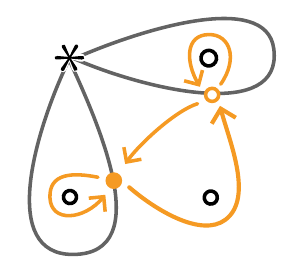}
	\caption{The surface $S^2_{4,*}$, parametrized by choice of arcs and an embedding of the quiver defining $\mc{B}$.}
	\label{fig:S2param}
\end{figure}

\begin{con}\label{con:immersion} Let $T$ be a pointed 4-ended tangle. The parametrization in \cref{fig:S2param} determines how to immerse of the graph of $\Rd(T)$ into $S^2_{4,*}$ as one of two train tracks, $i(\Rd(T))$ or $i^{\spir}(\Rd(T))$. Fix a metric on $S^2_{4,*}$ that agrees (locally) with the flat metric on this page, so that we can draw geodesics as straight lines. First, embed the vertices on the corresponding parametrizing arcs, i.e. place every vertex of $\Rd(T)$ that is labelled $\circ$ (resp.\ $\bullet$) on the $\circ$ (resp.\ $\bullet$) parametrizing arc. Second, given an edge $\xra{P}$ in $\Rd(T)$ that is labelled by a path algebra element $P \in \mc{B}$, let $P = Z_1\dots Z_k$, where each $Z_i$ is either $S_{\odot}$ or $D_\odot$; immerse the edge $\xra{P}$ so that it is homotopic to the path in $S^2_{4,*}$ that is the concatenation $i(Z_1)\dots i(Z_k)$ and so that it intersects the parametrizing arcs in a right angle. Finally, for each leaf of $\Rd(T)$, i.e. vertex of valence 1, the embedding is obtained by connecting the image of the vertex to a puncture so that no new intersections with the parametrizing arcs are generated and the immersed curve still intersects the parametrizing arcs transversely. To obtain he immersion $i(\Rd(T))$, connect leaves to punctures via shortest paths, while the embedding $i^{\spir}(\Rd(T))$ is defined so as to connect leaves to punctures via curves that twist infinitely counterclockwise around the puncture; see \cref{fig:wrapPair} for both kinds of immersions.

The resulting 1-manifold in $S^2_{4,*}$ is a train track whose switches all occur in a small neighbourhood of the parametrizing arcs.
\end{con}

As an example, for the following representative of $\Rd(T_{3_1})$ (where $T_{3_1}$ is the quotient tangle associated with the unique strong inversion on the Seifert-framed right-handed trefoil)
\[\begin{tikzcd}[column sep = 1em, row sep = 0.5em]
	\circ \rar["D"]	&\circ \rar["SS"]	&\circ \rar["D"]	&\circ \ar[dr, "S"]			&			&				&			&			&	\\
				&			&			&						&\bullet		&				&			&			&	\\		
				&			&			&\bullet \ar[ur, "D"]\ar[dr, "S"]	&			&				&			&			&	\\
				&			&			&						& \circ\rar["D"]	&\circ \rar["SS"]		&\circ \rar["D"]	&\circ\rar["S"]	&\bullet
\end{tikzcd}\]
the embedding $i(\Rd(T_{3_1})$ is the curve in \cref{fig:singint2} below.

\begin{figure}[h]
	\centering
	\includegraphics[height=0.4\textwidth]{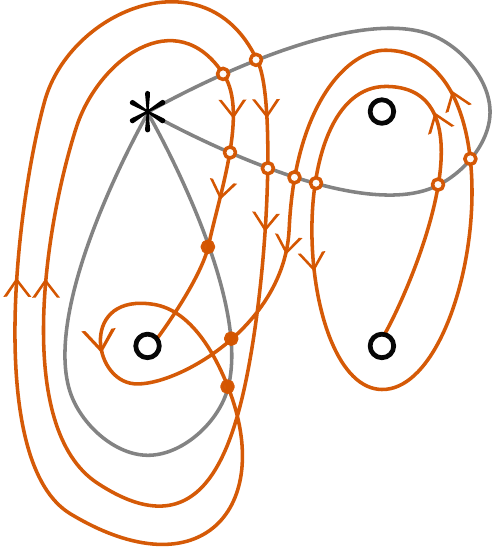}
	\caption{The immersed curve $\wt{\BN}(T_{3_1})$.}
	\label{fig:singint2}
\end{figure}

\begin{thm}[\cite{KWZ19}, Theorem 1.5] Let $T$ be a 4-ended framed pointed tangle. There is a (possibly disconnected) curve $\wt{\BN}(T; \k)$, immersed in the pointed 4-punctured sphere $S^2_{4,*}$, with a possibly non-trivial local system. This curve with local system is a tangle invariant when considered up to homotopy of the curve and matrix similarity of the local system.
\end{thm}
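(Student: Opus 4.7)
The plan is to reduce the statement to a claim about type D structures over $\mc{B}$ and then use \cref{con:immersion} to translate algebra into geometry. By \cref{thm:skeletalEquiv} and \cref{def:cyrD}, the assignment $T \mapsto \Rd(T) \in \cat{Mod}^\mc{B}$ is already a tangle invariant up to chain homotopy, so the task is to promote this homotopy class of type D structures to a canonical geometric object in $S^2_{4,*}$, well-defined up to regular homotopy and matrix similarity of local systems.

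First I would establish the existence of a \emph{curve-like} representative of every homotopy class in $\cat{Mod}^\mc{B}$. The strategy is to apply the Cancellation Lemma (\cref{lem:cancel}) repeatedly to remove all identity components of the differential, and then use the Clean-up Lemma (\cref{lem:cleanUp}) together with a normal form for the remaining generators to reduce $\Rd(T)$ to a disjoint union of ``linear'' and ``cyclic'' pieces whose arrows are labelled by single algebra generators $S_\odot$ or $D_\odot^k$. Concretely, one shows that after cancellation any reduced representative has each vertex meeting at most two arrows whose labels, read cyclically around the vertex along the parametrizing arc, alternate between the two types $S$ and $D$. This is a standard argument over a gentle-type algebra like $\mc{B}$; it produces the ``train-track'' generators needed as input to \cref{con:immersion}. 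Cyclic pieces with non-trivial monodromy give rise to the local systems.

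Next, given such a reduced representative, apply \cref{con:immersion} to obtain either $i(\Rd(T))$ or $i^{\spir}(\Rd(T))$ as an immersed 1-manifold (with local system on closed components) in $S^2_{4,*}$. I would verify that the resulting train tracks may be resolved into immersed multicurves by standard smoothing: the switches all lie in a neighborhood of the parametrizing arcs, and the $\mc{B}$-relations $DS = SD = 0$ guarantee that no two outgoing strands at a switch belong to the same local piece of the algebra, so the resolution is forced and canonical.

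The main obstacle, and the step that takes the real work, is invariance: if $\Rd(T) \simeq \Rd(T')$ as type D structures, then the two associated immersed curves differ by a regular homotopy (and similarity of local systems). My plan is to factor any homotopy equivalence through a sequence of elementary moves—Cancellation and Clean-up—and check each one geometrically. Cancellation corresponds to a local isotopy that collapses a bigon between the $\bullet$ and $\circ$ parametrizing arcs; Clean-up corresponds to a finger move along an arc labelled by the degree-$0$ morphism $\eta$. Uniqueness of the curve-like representative up to these two moves, together with the discussion of spiralling versus non-spiralling at punctures (which accounts for whether one uses $i$ or $i^{\spir}$), then yields the theorem. Finally, matrix similarity of local systems enters because the conjugation freedom in presenting a cyclic reduced piece of $\Rd(T)$ by an explicit matrix is exactly the freedom of conjugating the monodromy of the immersed closed curve. \qed
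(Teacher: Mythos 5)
The paper does not prove this theorem; it cites it as \cite{KWZ19}, Theorem 1.5, so there is no internal proof to compare against. Your sketch does capture the broad shape of the argument there: pass to type D structures via \cref{thm:skeletalEquiv} and \cref{def:cyrD}, find a curve-like representative, immerse it via \cref{con:immersion}, and check invariance. But there is a genuine gap in the middle step.

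You claim that after applying the Cancellation Lemma repeatedly, every reduced representative has each vertex meeting at most two arrows with labels alternating between $S$- and $D$-type. This is not true: a chain complex over $\mc{B}$ with no identity components of the differential (the output of cancellation) can have vertices of arbitrary valence, and the Clean-up Lemma by itself will not remedy this. The passage from \emph{reduced} to \emph{curve-like} is precisely the nontrivial content of the theorem. It requires the classification of bounded complexes over the gentle algebra $\mc{B}$ into direct sums of string and band objects (the Butler--Ringel / Crawley-Boevey style classification, or equivalently the identification of the derived category with a partially wrapped Fukaya category in the spirit of Haiden--Katzarkov--Kontsevich). You wave at this as ``a standard argument over a gentle-type algebra like $\mc{B}$,'' but in \cite{KWZ19} it is the main technical input, not a routine normal-form reduction. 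Without it, neither the existence of a curve-like representative nor its uniqueness up to the moves you list is established, and the smoothing of the train track is not actually ``forced and canonical'' --- it is forced only once the type D structure is already curve-like. A secondary imprecision: the local systems are not a byproduct of conjugating ``the monodromy of a cyclic reduced piece'' in general; they arise specifically for band objects, after one has separated out the indecomposable strings, and their well-definedness up to similarity is again part of the classification theorem rather than a consequence of your two homotopy lemmas.
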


We will often drop the field $\k$ from the notation. One may think of the invariant $\wt{\BN}(T)$ as a reformulation of the type D structure $\Rd(T)$: by intersecting $\wt{\BN}(T)$ with the parametrizing arcs and running \cref{con:immersion} backwards, we recover a graph that is, as a type D structure, chain homotopy equivalent to $\Rd(T)$; see \cite[Theorem 5.14]{KWZ19}.

\begin{defn}\label{def:curvyD} Given $\wt{\BN}(T)$, we call the type D structure obtained by intersecting $\wt{\BN}(T)$ with the parametrizing arcs the \textbf{curve-like} representative of $\Rd(T)$.
\end{defn}

\begin{thm}[{\!\!\cite[Theorem 1.9]{KWZ19}}] Suppose that $T_1$, $T_2$ are two 4-ended framed pointed tangles and let $\mc{L}(T_1, T_2)$ be the link obtained by gluing $T_1$ and $T_2$ together as in \cref{fig:glue}. Then the Bar-Natan homology of $\mc{L}(T_1, T_2)$ is isomorphic to the wrapped Lagrangian Floer homology of $\wt{\BN}(mT_1)$ and $\wt{\BN}(T_2)$:
\[\wt{\BN}(\mc{L}(T_1, T_2); \k) \iso \HF(\wt{\BN}(mT_1; \k), \wt{\BN}(T_2; \k)),\]
where $m$ denotes the mirror.
\end{thm}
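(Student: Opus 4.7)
My plan is to chain two equivalences: the algebraic pairing theorem of \cref{thm:hom} with a geometric realization result identifying morphism spaces over $\mc{B}$ with wrapped Floer homology on $S^2_{4,*}$.

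First I would apply \cref{thm:hom} to obtain the chain homotopy equivalence
\[\wt{\CBN}(\mc{L}(T_1,T_2))\{-1\} \simeq \mathrm{Mor}(\Rd(mT_1),\Rd(T_2))\]
of bigraded chain complexes. Passing to homology on both sides immediately reduces the statement to showing that the Ext group $H^*\mathrm{Mor}(\Rd(mT_1),\Rd(T_2))$ is isomorphic, up to a grading shift, to the wrapped Floer homology $\HF(\wt{\BN}(mT_1),\wt{\BN}(T_2))$. The grading shift $\{-1\}$ will have to be tracked carefully; it corresponds to a half-shift of one of the Lagrangian curves.

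Next I would establish the algebra-geometry dictionary. The parametrization in \cref{fig:S2param} realizes the quiver defining $\mc{B}$ as an embedded graph in $S^2_{4,*}$ whose $1$-skeleton consists of two arcs $i_\bullet(S^1_*)$ and $i_\circ(S^1_*)$. A standard computation in the wrapped Fukaya category (identical in spirit to Abouzaid's result for the $n$-punctured plane) shows that these two arcs are generators and that their endomorphism algebra $\mathrm{End}_{\wrapFuk(S^2_{4,*})}(i_\bullet \oplus i_\circ)$ is quasi-isomorphic to $\mc{B}$: the paths $S_\bullet,S_\circ,D_\bullet,D_\circ$ correspond to the shortest Reeb chords/wrapping generators at the four non-special punctures, and the relations $DS = SD = 0$ arise because the corresponding compositions wrap around the distinguished puncture. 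Once $\mc{B}$ is identified with the endomorphism algebra of a generating set in $\wrapFuk(S^2_{4,*})$, the functor sending a type D structure to its realization $i(\Rd(T))$ via \cref{con:immersion} is an equivalence of $A_\infty$-categories, and in particular is fully faithful on morphism spaces.

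The hard part is verifying that the algebraic differential on $\mathrm{Mor}(\Rd(mT_1),\Rd(T_2))$ matches the Floer differential counting holomorphic bigons between the train track representatives. Generators on both sides correspond to intersections of $i(\Rd(mT_1))$ with $i(\Rd(T_2))$ along the two parametrizing arcs, so the identification is a bijection at chain level. Matching differentials reduces to a local model computation: each generator $P \in \mc{B}$ along an edge factors as a sequence of elementary arrows $S_\odot, D_\odot$, and each such arrow corresponds to a unique immersed bigon in one of the discs $D_1,D_2,D_3$ of the parametrization. The contribution of a given pair of generators to $\del f = f\circ d_1 - (-1)^{h(f)} d_2 \circ f$ then matches the signed count of bigons by a direct inspection, and passing to the spiralling representatives $i^{\spir}$ takes care of the non-compact components (i.e.\ the wrapping at the four punctures). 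Since this is Theorem 1.9 of \cite{KWZ19}, I would invoke their detailed verification rather than redo the sign conventions from scratch.
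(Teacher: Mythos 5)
This statement is cited from \cite{KWZ19} as background; the paper does not prove it, so there is no internal proof to compare against. That said, your sketch captures the correct logical structure of the argument in \cite{KWZ19}: reduce the geometric pairing to the algebraic one via \cref{thm:hom} (Proposition 4.31 there), then build a dictionary between $\mathrm{Mor}$ complexes over $\mc{B}$ and Floer intersection data on $S^2_{4,*}$.

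A few points of imprecision are worth flagging. First, your route through an $A_\infty$-equivalence $\cat{Mod}^{\mc{B}} \simeq \wrapFuk(S^2_{4,*})$ (à la Abouzaid) is a stronger and cleaner-sounding statement than what \cite{KWZ19} actually establish. Their argument is more hands-on: they prove a \emph{classification} theorem (Theorem 1.5 / 5.14) asserting that every type D structure over $\mc{B}$ is homotopy equivalent to one coming from an immersed curve with local system, and then compute $\mathrm{Mor}$ complexes between such curve-like representatives explicitly, matching generators and differentials to geometric intersections and bigons one by one. If you genuinely wanted to follow the Abouzaid route you would owe a proof that the parametrizing arcs generate $\wrapFuk(S^2_{4,*})$ and that the resulting $A_\infty$-morphisms are undeformed, which is not free. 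Second, local systems are invisible in your sketch: they show up precisely for the compact components of $\wt{\BN}(T)$, and the classification theorem (not an abstract equivalence of categories) is what produces them. Third, the wrapping $i^{\spir}$ applies only to the first argument $\wt{\BN}(mT_1)$ and only at the punctures where that curve actually has ends; your phrasing ("wrapping at the four punctures") is loose. Since you explicitly defer to the cited verification for the details, none of these are fatal, but they are the places where the proposal diverges from the actual proof.
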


\begin{figure}[h]
	\centering
	\includegraphics[height=0.4\textwidth]{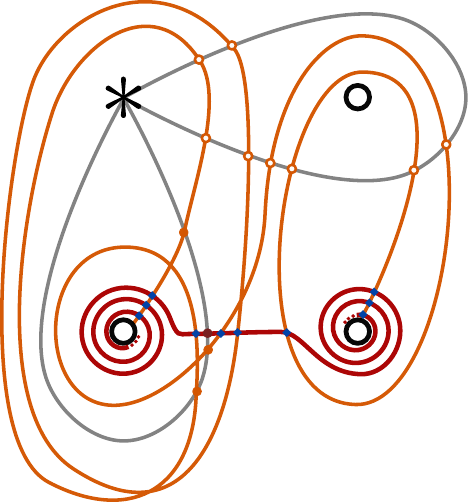}
	\caption{The Lagrangians $\wt{\BN}(Q(0))^{\protect\spir}$ (in red) and $\wt{\BN}(T_{3_1})$ (in orange), and their intersection points (in blue). This computes $\HF(\wt{\BN}(mQ(0)), \wt{\BN}(T_{3_1})) \iso \k^4 \oplus \k[H]^2$.}
	\label{fig:wrapPair}
\end{figure}

\begin{thm}[\!\!\cite{KWZ19}, Theorem 8.1]\label{thm:MCG} Let $\rho$ be an element of the mapping class group of $S^2_{4,*}$ that fixes $\ast$. Then, with coefficients in $\F_2$, the immersed curve invariant is natural with respect to the mapping class group action:
	\[\wt{\BN}(\rho T; \F_2) = \rho \wt{\BN}(T ; \F_2).\]
\end{thm}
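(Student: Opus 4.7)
The plan is to reduce naturality for arbitrary $\rho$ to naturality on a finite generating set of the mapping class group of $S^2_{4,*}$ fixing $\ast$, and then to verify naturality on each generator by a local planar-algebra computation matched with a local analysis of train tracks.

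First, I would choose generators and identify the corresponding tangle operations. The mapping class group of $S^2_{4,*}$ fixing the distinguished puncture is generated by the half-Dehn twists $\sigma_1, \sigma_2$ that swap consecutive pairs of the three non-special punctures; this follows from the standard presentation of the spherical braid group $B_3(S^2)$. Each $\sigma_i$ acts on tangles by inserting a single crossing between the two adjacent tangle ends at the corresponding punctures, which is precisely a planar algebra operation of the same type as $\tau$ in \cref{fig:DehnTwist}. By \cref{thm:BNOpAlg}, applying $\sigma_i$ to a tangle thus induces an explicit endofunctor $\sigma_i^D$ of $\cat{Mod}^\mc{B}$ via the equivalence $\Omega$ of \cref{thm:skeletalEquiv}, which can be written out on the generators $\bullet, \circ$ of $\mc{B}$ and on the morphisms $S_\odot, D_\odot$ in the style of \cref{prop:CCobs}, but much shorter since $\tau$ has only a single crossing.

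Next, I would compare $\sigma_i^D$ with the geometric action of $\sigma_i$ on $\wt{\BN}(T)$. Both actions are supported in a small neighbourhood of two adjacent non-special punctures: on the algebraic side because the planar algebra operation only affects two strands, and on the geometric side because a Dehn twist is supported in an annular neighbourhood of its core curve. The comparison therefore reduces to checking a small number of local pictures: applying \cref{con:immersion} to $\sigma_i^D(\Rd(T))$ should produce a train track homotopic to the image of $\wt{\BN}(T)$ under the geometric Dehn twist. This is a finite combinatorial verification that tracks how arcs crossing the $\bullet$- and $\circ$-parametrizing arcs near the twisted punctures get rerouted, and, since both sides are local, it suffices to check it on the universal local model (an arc passing through the twisted region).

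The main obstacle is sign and orientation bookkeeping. A Dehn twist can reverse the orientation of a component of an immersed curve, and on the algebraic side the induced endofunctor introduces corresponding signs in the mapping cones of $\br{T}_{/l}$. Over a field of characteristic not equal to 2 these signs would have to be tracked carefully, and naturality would likely only hold up to an explicit $\pm 1$-valued character of the mapping class group (or a corresponding twist of the local system on $\wt{\BN}(T)$). Passing to $\F_2$ kills all such signs, so the local match in the previous paragraph becomes an identification of train tracks up to homotopy and nothing more; this is why the hypothesis ``with coefficients in $\F_2$'' appears in the statement.
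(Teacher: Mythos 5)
This theorem is cited in the paper from \cite{KWZ19}; no proof is given in the paper itself, so there is no ``paper's own proof'' to compare against. Your proposal is a reconstruction of what a proof would look like, and the high-level strategy (reduce to a finite generating set of the mapping class group, realize each generator as a one-crossing planar-algebra operation, and match the algebraic action against the geometric Dehn twist on train tracks) is the natural one and is consistent with how \cite{KWZ19} proceeds. That said, as it stands there are several genuine gaps.

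The central gap is that the actual verification is asserted, not carried out. Identifying $\sigma_i^D(\Rd(T))$ with the train track of the twisted curve is not a one-step local check: after applying the planar-algebra operation you obtain a type D structure that is not curve-like, and you must deloop and cancel (as in \cref{eg:CoRes}, \cref{eg:CiRes}) before \cref{con:immersion} produces a train track you can compare with $\rho\,\wt{\BN}(T)$. Cancellation can, in principle, produce ``long'' differentials that couple regions far from the twisted punctures, so the claim that a single local model ``an arc passing through the twisted region'' suffices needs to be argued, not assumed. This is precisely the kind of diagram chase that occupies the bulk of \cref{sec:properties} for the operator $\Cb$, and it is the entire content of the theorem for the twist operators.

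Two further issues: (1) You do not address local systems. Curve-like type D structures correspond to immersed curves \emph{with local systems}, and the statement of naturality must respect them; restricting to the non-compact component does not suffice for a general $T$. (2) The explanation of the $\F_2$ hypothesis, while plausible in spirit, is speculative in its details --- the ``$\pm 1$-valued character of the MCG'' is conjectured, not derived, and the phrasing ``a Dehn twist can reverse the orientation of a component'' is not obviously meaningful for unoriented immersed curves. The remark following the theorem in the paper points to a forthcoming article \cite{KWZ22b} for general coefficients, which suggests the obstruction is precisely the absence of a natural geometry/algebra dictionary over arbitrary fields rather than a simple sign character. Finally, a minor point: the mapping class group of $S^2_{4,*}$ fixing $*$ is the mapping class group of the thrice-punctured plane, i.e.\ $B_3$ modulo its center, not the spherical braid group $B_3(S^2)$; your generators $\sigma_1, \sigma_2$ are correct but the group identification is off.
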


\begin{wrapfigure}[24]{R}{0.32\textwidth}
\centering
\includegraphics[width=0.24\textwidth]{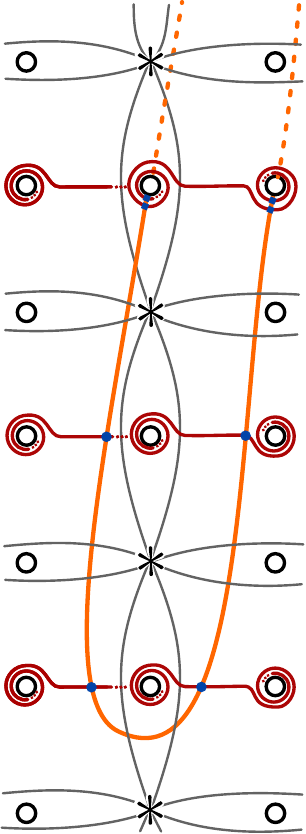}
\caption{Lift of the Lagrangians in \cref{fig:wrapPair} to the covering space $\R^2 \setminus \left(\frac{1}{2}\Z \right)^2$.}
\label{fig:wrapPairLift}
\end{wrapfigure}

Two remarks about the above theorem. First, the proof of naturality with coefficients in an arbitrary field $\k$ is set to appear in a forthcoming article by the same authors, \cite{KWZ22b}, and we will make use of this latter version. Second, although we have equality of bigraded invariants, it is not clear how an arbitrary mapping class group element $\rho$ acts on the bigradings; often, knowledge of the underlying 1-manifold $\rho \wt{BN}(T;\F_2)$ allows one to deduce the bigrading information.

\begin{rmk} Here is how we use the immersed curve theory in practice. First, we will mainly worry about non-compact curves. Such curves have trivial local systems \cite{KWZ19}, so we do not need to worry about this additional structure.
Second, in practice, the chain complex for Lagrangain intersection homology has trivial differential: indeed, any bigon contributing to the differential in $\CF(\gamma_1, \gamma_2)$ can be eliminated by an isotopy. Thus, for appropriately chosen representatives of the immersed curves, the group $\HF(\wt{\BN}(mT_1), \wt{\BN}(T_2))$ is, as a $\k$-module, freely generated by the intersection points of $\wt{\BN}(mT_1 ; \k)$ and $\wt{\BN}(T_2 ; \k)$.
Third, the adjective ``wrapped" in the theorem indicates that, in $\HF(\gamma_1, \gamma_2)$, the curve $\gamma_1$ must have every non-compact end wrapping infinitely around its corresponding puncture, as in \cref{fig:wrapPair}. Thus, the wrapped immersion $i^\spir$ in \cref{con:immersion} is chosen for the first entry in $\HF(\wt{\BN}(T_1), \wt{\BN}(T_2))$. We will drop the nonstandard $\spir$ notation henceforth.
Finally, it is convenient to lift the curves $\wt{\BN}(mT_1 ; \k)$ and $\wt{\BN}(T_2 ; \k)$ to the following covering space:
\[\R^2\setminus ({\scalebox{1}{$\frac{1}{2}$}} \Z)^2  \xra{\alpha} T^2_{4,*} \xra{\beta} S^2_{4,*},\]
where $\beta$ the usual double cover corresponding to the hyperelliptic involution and $\alpha$ is the universal Abelian cover. Intersection points between two curves $\gamma_1$ and $\gamma_2$ in $S^2_{4,*}$ correspond to intersection points of one lift of $\gamma_2$ with the union of all the lifts of $\gamma_1$. See \cref{fig:wrapPairLift}.
\end{rmk}

\begin{notn} For $n \in \Q \cup \{\infty\}$, let $\bf{a}_n$ denote the line of slope $n$ through a non-special half-integer lattice point in the covering space $\R^2 \setminus \left(\frac{1}{2}\Z\right)^2 \ra S^2_{4,*}$.
\end{notn}

Recall that $Q(n)$ is the $n$-rational tangle in \cref{fig:nTangle}.

\begin{prop}[\!\!\cite{KWZ19}] Modulo grading, $\wt{\BN}(Q(n) ; \k) = \bf{a}_n$.
\end{prop}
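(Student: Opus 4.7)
The plan is to reduce to base cases via the mapping class group action. First I would handle $n=0$ and $n=\infty$ directly: the bracket $\br{Q(0)}_{/l}$ has a single object $\oRes$ and no differential, so $\Rd(Q(0))$ is a single generator labelled $\bullet$. By Construction \ref{con:immersion}, this vertex is placed on the $\bullet$ parametrizing arc and its two ``leaves'' are connected to the two horizontally adjacent punctures via shortest paths, producing a horizontal embedded arc in $S^2_{4,*}$. Lifting to the covering space $\R^2 \setminus (\tfrac{1}{2}\Z)^2$ gives the collection of horizontal lines through non-special half-integer lattice points, i.e.\ $\bf{a}_0$. The case $Q(\infty)$ is symmetric and produces $\bf{a}_\infty$.

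Second, for general $n \in \Z$, I would write $Q(n) = \tau^n Q(0)$, where $\tau$ is the half-Dehn twist of Figure \ref{fig:DehnTwist}, and invoke Theorem \ref{thm:MCG} (in the general-field version of \cite{KWZ22b}) to get the equality of immersed curves
\[
\wt{\BN}(Q(n); \k) \;=\; \wt{\BN}(\tau^n Q(0); \k) \;=\; \tau^n\, \wt{\BN}(Q(0); \k) \;=\; \tau^n \bf{a}_0.
\]
Modulo grading, it then suffices to verify the purely geometric fact that $\tau$ acts on curves in $S^2_{4,*}$ by carrying $\bf{a}_m$ to $\bf{a}_{m+1}$ for every $m \in \Q \cup \{\infty\}$. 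This is a standard observation about the hyperelliptic lift: the pointed mapping class group of $S^2_{4,*}$ fixing $\ast$ acts on the universal abelian cover $\R^2 \setminus (\tfrac{1}{2}\Z)^2$ through its action on the homology of the covering torus, realising the usual $SL_2(\Z)$ action on slopes. The generator $\tau$ corresponds to the shear fixing the horizontal direction and sending vertical to diagonal, i.e.\ the matrix $\bigl(\begin{smallmatrix}1&0\\1&1\end{smallmatrix}\bigr)$, which permutes the slope-$m$ lines through half-integer lattice points to slope-$(m+1)$ ones.

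The step I expect to be the main obstacle is this last geometric identification of the action of $\tau$. One must be careful that $\tau$ is precisely a half-twist about the two rightmost punctures (and not a full twist, nor a twist about a different pair), that its lift to the hyperelliptic double cover $T^2_{4,*}$ is a genuine Dehn twist about a vertical essential curve, and that its further lift to $\R^2 \setminus (\tfrac{1}{2}\Z)^2$ agrees with the claimed shear. This boils down to checking one example—verifying that $\tau(\bf{a}_0) = \bf{a}_1$ and $\tau(\bf{a}_\infty) = \bf{a}_\infty$—and then using the fact that the mapping class group acts by homeomorphisms and so respects the slope at infinity in the covering space.

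As a sanity check and backup, the same answer can be produced directly without appealing to Theorem \ref{thm:MCG}: the bracket $\br{Q(n)}_{/l}$ can be simplified by iterated delooping and cancellation (compare Examples \ref{eg:CoRes} and \ref{eg:CiRes}, and the computation of $\tau^k \oResOr$ used in Section \ref{subsec:unknot}) to yield a zig-zag type D structure of the form $\bullet \xrightarrow{S} \circ \xrightarrow{D} \circ \xrightarrow{SS} \cdots \xrightarrow{S} \bullet$ of length $|n|$, which, when immersed via Construction \ref{con:immersion} and lifted to the cover, visibly traces out a line of slope $n$ through a half-integer lattice point.
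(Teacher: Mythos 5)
The paper does not prove this statement; it cites \cite{KWZ19} for it directly, so there is no internal proof to compare against. Your argument is a reasonable and essentially self-contained reconstruction, and both of your strategies are valid: the reduction to a base case via mapping-class-group naturality (\cref{thm:MCG}) is the standard conceptual route, while the direct delooping computation is closer to what \cite{KWZ19} actually does to pin down $\wt{\BN}$ on rational tangles.

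Two small things to tighten. First, your prose description of the shear is inverted relative to the matrix you write: $\bigl(\begin{smallmatrix}1&0\\1&1\end{smallmatrix}\bigr)$ fixes the vertical vector $\bigl(\begin{smallmatrix}0\\1\end{smallmatrix}\bigr)$ and sends the horizontal $\bigl(\begin{smallmatrix}1\\0\end{smallmatrix}\bigr)$ to the diagonal $\bigl(\begin{smallmatrix}1\\1\end{smallmatrix}\bigr)$, not the other way around; your stated conclusions $\tau(\mathbf{a}_0)=\mathbf{a}_1$ and $\tau(\mathbf{a}_\infty)=\mathbf{a}_\infty$ match the matrix, so the error is only in the English. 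Second, you correctly flag that \cref{thm:MCG} is only stated in the paper for $\F_2$, with the general-$\k$ version deferred to \cite{KWZ22b}; since the proposition is over an arbitrary field, the mapping-class-group route strictly depends on that forthcoming result, whereas your backup delooping argument does not and is therefore the cleaner choice if one wants a self-contained proof at the level of the present paper. The exact sign in $Q(n) = \tau^{\pm n}Q(0)$ is a convention check that you leave implicit, but since the statement is "modulo grading'' and mirroring swaps $n\leftrightarrow -n$, this does not affect correctness.
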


We are now ready to analyze the immersed curve invariants of cap-trivial tangles. 

\begin{notn} Given curve $\gamma \in \wrapFuk(S^2_{4,*})$, let $\gamma^a$ denote the non-compact components of $\gamma$. Likewise, given a curve-like type D structure $\Rd$, let $\Rd^a$ be the type D structure corresponding to the non-compact curve.
\end{notn}

Since Khovanov homology detects the unknot \cite{KM11}, cap-triviality of $T$ is equivalent to the immersed curve $\wt{\BN}(T; \k)$ having the property 
\begin{equation}\label{eq:singTow}
\HF(\wt{\BN}(Q_\infty), \wt{\BN}(T)) = \HF({\bf{a}}_\infty, \wt{\BN}^a(T)) \iso \k[H],
\end{equation}
the reduced Bar-Natan homology of the unknot.

\begin{lem}\label{lem:rdleaf} Suppose $T$ is cap-trivial. Then, up to action by $\tau$, the graph of $\Rd^{a}(T)$ is either equal to $\bullet$ or else it has a leaf that is connected to the rest of $\Rd^{a}(T)$ by an edge labelled $D$:
\[ \cdots \xra{D} \bullet \hspace{1cm} \text{ or} \hspace{1cm} \bullet \xra{D} \cdots.\]
The other leaf of $\Rd^{a}(\tau)$ is $\bullet$. 
\end{lem}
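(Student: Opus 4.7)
The plan is to extract the leaf structure of $\Rd^a(T)$ from the non-compact geometry of $\wt{\BN}^a(T) \subset S^2_{4,*}$, by using the wrapped pairing to turn cap-triviality into a sharp counting constraint on how the ends of $\wt{\BN}^a(T)$ meet the punctures, and then to apply the mapping class group action of $\tau$ to normalize.

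First I would invoke cap-triviality through \eqref{eq:singTow}. Since $T(\infty)$ is the unknot and the reduced Bar-Natan homology of the unknot is free of rank one over $\k[H]$, we have
\[\HF(\bf{a}_\infty, \wt{\BN}^a(T)) \cong \k[H].\]
Compact components of $\wt{\BN}(T)$ pair with $\bf{a}_\infty$ only into $H$-torsion (trivial local systems and compact geometry forbid an infinite $H$-tower), so the single infinite $H$-tower on the right must come entirely from the non-compact part $\wt{\BN}^a(T)$.

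Next I would analyze which ends of $\wt{\BN}^a(T)$ produce the tower. The line $\bf{a}_\infty$ connects two non-special punctures of $S^2_{4,*}$, and its wrapped representative spirals infinitely around each of them, so a non-compact end of $\wt{\BN}^a(T)$ that terminates at one of those two punctures contributes exactly one infinite $H$-tower of intersection points, while ends landing at the other two punctures contribute only finitely many intersections. Rank considerations then force $\wt{\BN}^a(T)$ to be a single non-compact curve with exactly one end at an $\bf{a}_\infty$-puncture; the degenerate subcase in which the curve is isotopic to some $\bf{a}_n$ corresponds precisely to $\Rd^a(T) = \bullet$.

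Now I would use the mapping class group. By \cref{thm:MCG} the action of $\tau$ on tangles realises the corresponding half-Dehn twist on $S^2_{4,*}$. Powers of $\tau$ suffice to move the $\bf{a}_\infty$-terminal end of $\wt{\BN}^a(T)$ to the puncture bounding the disc adjacent only to the $\bullet$-arc (the $D_\bullet$-puncture) and, simultaneously, to push the other end away from the $D_\circ$-puncture so that it too lies on the $\bullet$-side of $S^2_{4,*}$. Reading \cref{con:immersion} in reverse, an end that terminates at the $D_\bullet$-puncture corresponds to a leaf of $\Rd^a(T)$ labelled $\bullet$ whose unique edge is labelled $D_\bullet$ (this is exactly how the puncture's loop in the embedded quiver gets encoded), giving one of the shapes $\cdots\!\xra{D}\!\bullet$ or $\bullet\!\xra{D}\!\cdots$. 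The opposite end, now adjacent to the $\bullet$-arc but not wrapping $p_{D_\bullet}$, likewise yields a $\bullet$-labelled leaf, so that both leaves of $\Rd^a(T)$ carry the $\bullet$ idempotent as claimed.

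The principal difficulty will be twofold: first, verifying carefully that no pair of extraneous intersection points survives as cancelling torsion in $\HF(\bf{a}_\infty, \wt{\BN}^a(T))$, which could mask a more complicated configuration of ends while still producing $\k[H]$; and second, being precise about the orbit of the punctures under powers of the half-Dehn twist $\tau$ so as to guarantee that the two normalisations above can be achieved simultaneously by a single power of $\tau$.
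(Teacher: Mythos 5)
Your plan uses the same two ingredients as the paper's (very terse) proof: the pairing constraint \eqref{eq:singTow} coming from cap-triviality, and a case analysis of how the free ends of the non-compact curve meet the punctures of $S^2_{4,*}$. So the strategy is right. However, your dictionary between leaf labels and end-punctures is inverted, and this is propagated into the normalization step.

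In \cref{con:immersion}, a $\bullet$-leaf whose unique incident edge is labelled $D_\bullet$ does \emph{not} have its free end at the puncture inside the $D_\bullet$-disc. The $D_\bullet$-labelled edge-arm of the curve already exits into the $D_\bullet$-disc; since the free arm must leave the vertex to the \emph{other} side of the $\bullet$-parametrizing arc without creating new crossings, it goes into the third (middle) disc and terminates at the puncture shared by $\bf{a}_0$ and $\bf{a}_\infty$. It is the $\bullet$-leaf with an $S$-type edge whose free end goes to the $D_\bullet$-puncture. One can sanity-check this against $\bf{a}_2 = \bullet\!\xra{S}\!\circ\!\xra{D}\!\circ$, whose ends lie at the $D_\bullet$-puncture and the shared puncture; the $\circ$-$D$ leaf is the one at the shared puncture.

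Because of this, the normalization as you wrote it cannot be carried out: since $\tau^m T$ is still cap-trivial, the tower-producing end of $\wt{\BN}^a(\tau^m T)$ must remain at one of the two $\bf{a}_\infty$-punctures for every $m$, so it can never be ``moved to the $D_\bullet$-puncture''. In fact $\tau$ fixes both $*$ and the $D_\bullet$-puncture and swaps the other two, so an end at the $D_\bullet$-puncture is pinned there under reframing. The normalization you actually want is that the $\bf{a}_\infty$-terminal end be brought to the \emph{shared} puncture with its last arc-crossing on the $\bullet$-arc (yielding the $\bullet$-$D$ leaf), while the other end sits permanently at the $D_\bullet$-puncture as a $\bullet$-$S$ leaf. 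By coincidence your two slips cancel and you land on the correct final shape, but as written the argument is upside down. The residual gaps you flag at the end (that no hidden pair of towers survives in $\HF(\bf{a}_\infty, \wt{\BN}(T))$, and that the desired leaf-type is in the $\tau$-orbit) are real and are precisely what the paper compresses into ``a straightforward combinatorial analysis''.
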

\begin{proof} This follows from \cref{eq:singTow} and a straightforward combinatorial analysis of intersections in $S^2_{4,*}$.
\end{proof}
%

\begin{lem}\label{lem:capTrivialConstraints} Let $T$ be a cap-trivial tangle and let $\Rd^{a}(T;\k)$ be its curve-like non-compact type D structure. Then the following are forbidden configurations of $\Rd^{a}(T;\k)$, by which we mean that no subquotient of $\Rd^{a}(T;\k)$ can belong to the following list:
\[\begin{split}
								\hspace{0.5cm}&\hspace{0.5cm}	\circ \xra{G^nSS_\circ} \circ		\\
	\circ \xra{G^nS_\circ} \bullet		\hspace{0.5cm}&\hspace{0.5cm}	\bullet \xra{G^nS_\bullet} 	\circ 		\\
	\circ \xra{G^nD_\circ} \circ			\hspace{0.5cm}&\hspace{0.5cm}	\bullet \xra{G^{n-1}SS_\bullet}\bullet	\\
	{\odot} \ra \circ \leftarrow {\odot}		\hspace{0.5cm}&\hspace{0.5cm}	{\odot} \leftarrow \circ \ra {\odot}
\end{split}\]
for $n \geq 1$, and where ${\odot}$ stands in for either $\circ$ or  $\bullet$.
\end{lem}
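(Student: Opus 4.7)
My plan is to prove the lemma by translating each forbidden algebraic configuration into a geometric feature of the corresponding immersed curve $\wt{\BN}^a(T;\k)$, and then contradicting the cap-triviality constraint
\[\HF(\mathbf{a}_\infty, \wt{\BN}^a(T)) \cong \k[H].\]
Throughout I would work with the curve-like representative of $\Rd^a(T)$ as constructed in \cref{con:immersion}, and use the pairing theorem together with \cref{lem:rdleaf} as the main tools.

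The two vertex-level prohibitions $\odot \to \circ \leftarrow \odot$ and $\odot \leftarrow \circ \to \odot$ should drop out immediately from the meaning of ``curve-like''. A valence-$2$ vertex labelled $\circ$ with both adjacent edges oriented inward (resp.\ outward) would force the two incident curve segments, under \cref{con:immersion}, to approach the $\circ$-parametrizing arc from the same side, producing a cusp or backtrack rather than a transverse intersection. Such a fold cannot occur in a type D structure obtained by transversely intersecting a genuine immersed curve with the parametrizing arcs, so these two configurations are ruled out for any tangle, not only cap-trivial ones.

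For the five edge-level prohibitions carrying a $G^n$ factor with $n\ge 1$, the plan is to show that each extra power of $G$ in the label corresponds to an additional loop of the curve segment around a non-special puncture local to the relevant quiver region. I would work edge-by-edge in the cover $\R^2\setminus(\tfrac{1}{2}\Z)^2$, choose geodesic-like representatives, and enumerate the transverse intersections of $\wt{\BN}^a(T)$ with suitable lifts of $\mathbf{a}_\infty$. Each extra winding produces two additional transverse intersection points, and therefore two additional generators of $\HF(\mathbf{a}_\infty, \wt{\BN}^a(T))$ via the pairing theorem. A bigrading bookkeeping in the conventions of \cite{KWZ19}, combined with the constraint on the two ends of $\Rd^a(T)$ supplied by \cref{lem:rdleaf}, then shows that these extra generators occupy bigradings incompatible with the tower $\k[H]$, so they cannot all cancel in homology; this contradicts cap-triviality.

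The main obstacle will be the intersection-and-grading bookkeeping in the last step: one must verify that the extra intersections produced by the $G^n$ winding genuinely survive in homology rather than pair off with intersections elsewhere along the curve. Because the $G$-action is local to a single puncture region and \cref{lem:rdleaf} pins down the behaviour near the two non-compact ends, this should reduce to a short finite list of local model pictures—tedious, but entirely mechanical. Writing the five cases side by side, with a single careful intersection count per label, should suffice to complete the proof.
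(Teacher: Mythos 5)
Your treatment of the $\circ$-elbow prohibitions contains a genuine gap. You claim that both-in or both-out elbows at a $\circ$-vertex force the two incident curve segments to meet the $\circ$-parametrizing arc on the same side, producing a tangency, and that this already rules them out for any tangle. This is not so. The \emph{side} of the arc on which a segment lies is dictated by the \emph{algebraic type} of the edge label ($D_\circ$ versus $S_\odot$ or $SS_\circ$), not by its direction. Transversality of the immersion only rules out elbows where both edges have the same underlying type, say $\odot \xra{G^kD_\circ} \circ \xla{G^lD_\circ} \odot$; those are genuine tangencies. But an elbow mixing a $D_\circ$ segment with an $SS_\circ$ or $S$ segment has its two branches on \emph{opposite} sides of the arc and is a perfectly transverse, geometrically realizable configuration. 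Indeed the paper's own proof, having dispatched the same-type case by the transversality observation you describe, is left with four different-type elbow configurations, and it rules them out by pairing with $\mathbf{a}_\infty$ and using cap-triviality (Figure~\ref{fig:verbot2}). Your proposal leaves these four cases unaddressed, so the elbow part of the lemma is not proved.

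For the $G^n$-labelled edges, your plan is essentially the paper's: translate the $G^n$ factor into winding of the corresponding curve segment around a local puncture and count intersections with $\mathbf{a}_\infty$. However, the bigrading bookkeeping you propose to show that the extra intersections survive is unnecessary overhead. The paper's framework already supplies the needed fact: once the two curves are placed in minimal position, the wrapped Lagrangian Floer complex has trivial differential, so $\HF(\mathbf{a}_\infty, \wt{\BN}^a(T))$ is free on the geometric intersection points. A winding segment forces additional essential intersections which cannot be removed by isotopy, and the rank constraint $\HF(\mathbf{a}_\infty, \wt{\BN}^a(T)) \cong \k[H]$ is immediately violated without any grading analysis. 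Using this observation would both simplify your argument and remove the main source of worry you flagged about intersections pairing off.

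To repair the proposal, you should split the elbow analysis into two cases as the paper does: use transversality for same-type elbows, and for each of the four different-type elbows ($\circ \xra{D} \circ \xla{SS} \circ$, $\circ \xla{D} \circ \xra{SS} \circ$, $\bullet \xra{S} \circ \xla{D} \circ$, $\bullet \xla{S} \circ \xra{D} \circ$) draw the local picture of the curve near the $\circ$-arc, verify the presence of an essential intersection with $\mathbf{a}_\infty$, and invoke cap-triviality.
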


\begin{notn} We use the term $\circ$-elbows for configurations of the form $\begin{tikzcd}[sep=small, cramped] {}\rar & \circ & {}\lar \end{tikzcd}$ or $\begin{tikzcd}[sep=small, cramped] {} &\lar \circ\rar & {} \end{tikzcd}$, and $\bullet$-elbows are defined likewise. Thus, the above lemma says that the curve-like representative of  $\Rd^{a}(T;\k)$ may not contain any $\circ$-elbows and can contain only arrows labelled with  $D_\bullet, S_\circ, S_\bullet, SS_\circ,$ or $D^n_\bullet$, for $k \geq 1$. 
\end{notn}

\begin{proof} This follows by considering the pairing with $\bf{a}_\infty$. Note that, for $n \geq 2$, we have 
\[\begin{array}{ccc} D^n = (-G)^{n-1}D  	&S^{2n-1} = G^{n-1}S	&SS^n = G^{n-1}SS\end{array},\]
so we consider segments of immersed curves corresponding to high powers of $S_\odot, D_\circ$ and $SS_\odot$. \Cref{fig:verbot1} shows that such a segment generates an essential intersection point between $\bf{a}_\infty = \wt{\BN}(m \iRes)$ and $\wt{\BN}(T)$, so that the resulting homology cannot be $\k[H]$, contradicting cap-triviality.

\begin{figure}[h]
	\labellist
	\hair 2pt
	\pinlabel $S_\circ^3$ at 86 -10
	\pinlabel $SS_\bullet$ at 310 -10
	\pinlabel $D_\circ^2$ at 525 -10
	\endlabellist
	\centering
	\includegraphics[width=0.7\textwidth]{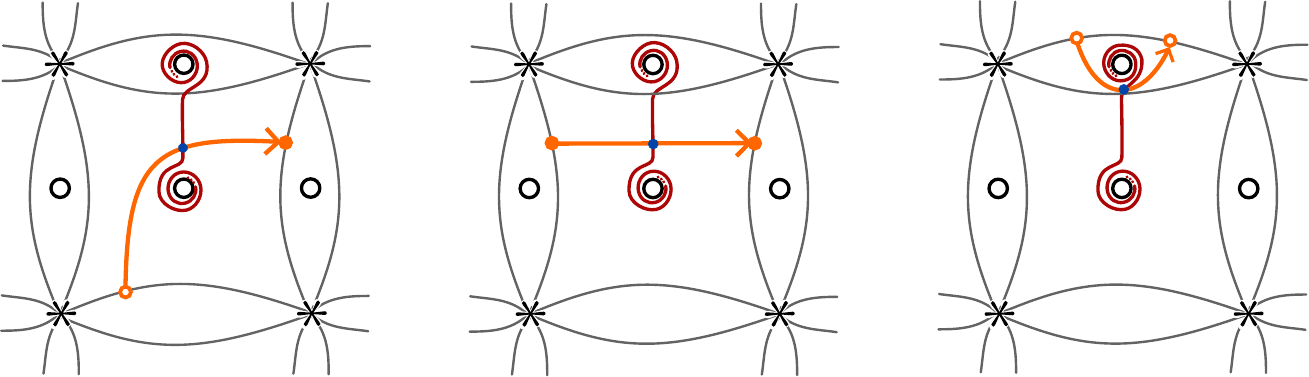}
	\caption{Forbidden algebra elements for cap-trivial curves.}
	\label{fig:verbot1}
\end{figure}

The analysis of $\circ$-elbows is similar. Note first that, since $\Rd^{a}(T)$ immerses as a curve that intersect the parametrizing arcs transversely, the $\circ$-elbow cannot be 
\[\odot \xra{Z^k} \circ \xla{Z^l} \odot \hspace{0.5cm} \text{or} \hspace{0.5cm} \odot \xla{Z^k} \circ \xra{Z^l} \odot,\]
where $k, l \geq 0$. The two arrows therefore have different labels. We are thus left to consider the following four cases
\[
\circ \xra{D^k} \circ \xla{SS^l} \circ 	\quad  \circ \xla{D^k} \circ \xra{SS^l} \circ  	\quad \bullet \xra{S^{2l-1}} \circ \xla{D^k} \circ	\quad  \bullet \xla{S^{2l-1}} \circ \xra{D^k} \circ.
\] 
Moreover, by our work so far, we only need to consider the case $l = k = 1$. \Cref{fig:verbot2} shows that each elbow in the above list generates an essential intersection point with $\bf{a}_\infty$.
\end{proof}

\begin{figure}[h]
	\labellist
	\hair 2pt
	\pinlabel {$\circ \xra{D_\circ} \circ \xleftarrow{SS_\circ} \circ$} at 91 -13
	\pinlabel {$\circ \xleftarrow{D_\circ} \circ \xra{SS_\circ} \circ$} at 316 -13
	\pinlabel {$\bullet \xra{S_\bullet} \circ \xleftarrow{D_\circ} \circ$} at 541 -13
	\pinlabel {$\bullet\xleftarrow{S_\circ}\circ\xra{D_\circ}\circ$} at 760 -13
	\endlabellist
	\centering
	\includegraphics[width=0.8\textwidth]{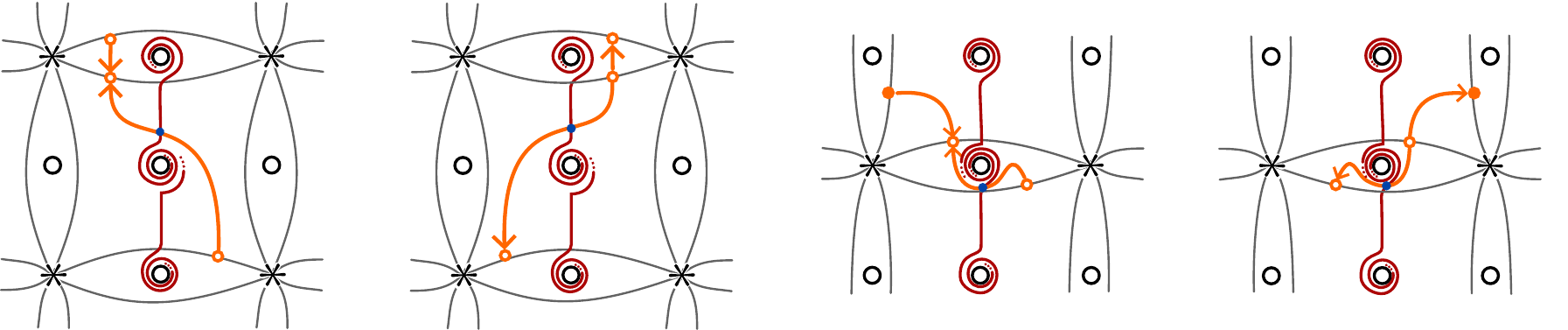}
	\caption{Forbidden elbows for cap-trivial curves.}
	\label{fig:verbot2}
\end{figure}

\subsection{Structural implications for $\Cb$}

In this subsection we prove what we call the ``elbow" and the ``homogeneous chain" lemmas. These are, computationally, the most involved arguments and form the technical core of the paper.

\begin{lem}[Elbow Lemma]\label{lem:elbow} Suppose $T$ is cap-trivial and $\Rd(T)$ is a curve-like type D structure that contains an elbow such as
\[\begin{tikzcd}[column sep = small, row sep = tiny, cramped] 
	\circ \ar[dr, "S_\circ"]			&	\\
							& \bullet \\	
	\bullet \ar[ur, "D_\bullet^k"]	& 
\end{tikzcd}
	\hspace{0.5cm} \text{ or } \hspace{0.5cm} 
\begin{tikzcd}[column sep = small, row sep = tiny, cramped] 		
									& \circ \\	
\bullet\ar[dr, "D_\bullet^k"] \ar[ur, "S_\bullet"] 	&	   \\ 		
									& \bullet	
\end{tikzcd}\]
Then the type D structure $\Cb(\Rd(T))$ splits along the $D^k_\bullet$ edge, by which we mean that we have a chain homotopy
\[\Cb	\left[ 
		\begin{tikzcd}[column sep=small, row sep = tiny, cramped] 
		 	\circ \ar[dr, "S_\circ"]		&	\\
								& \bullet \\	
			\bullet \ar[ur, "D_\bullet^k"]& 
		\end{tikzcd} 
	\right] \htp 
\Cb \left[ 
		\begin{tikzcd}[column sep=small, row sep = tiny, cramped] 
		\circ \ar[dr, "S_\circ"]	&	\\
						& \bullet \\	
		\bullet 			& 
		\end{tikzcd} 
	\right], \hspace{0.5cm} \text{ or } \hspace{0.5cm}
\Cb \left[ 
		\begin{tikzcd}[column sep=small, row sep = tiny, cramped] 
											& \circ \\	
		\bullet\ar[dr, "D_\bullet^k"] \ar[ur, "S_\bullet"] 	&	   \\ 		
											& \bullet
		\end{tikzcd} 
	\right] \htp
\Cb \left[ 
		\begin{tikzcd}[column sep=small, row sep = tiny, cramped] 
		 					& \circ	\\
		\bullet \ar[ur, "S_\bullet"]	&  \\	
							& \bullet
		\end{tikzcd} 
	\right].
\]
\end{lem}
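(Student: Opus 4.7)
The strategy is to apply the Clean-up Lemma (\cref{lem:cleanUp}) to the complex $\Cb_D(\Rd(T))$ with a nilpotent morphism $\eta$ that annihilates the $\Cb_D(D_\bullet^k)$ edge contribution. I focus on the first elbow configuration $x_1 \xrightarrow{S_\circ} y \xleftarrow{D_\bullet^k} x_2$, with $x_1 = \circ$ and $x_2 = y = \bullet$; the second elbow is analogous by the structural symmetry between $\Cb_D(S_\circ)$ and $\Cb_D(S_\bullet)$ in \cref{prop:CCobs}. The morphism $\eta$ is chosen as a map $\Cb_D(x_2) \to \Cb_D(x_1)$ of bigrading $(0,0)$, extended by zero elsewhere, arranged so that the $(\Cb_D(x_2), \Cb_D(y))$-component of the new differential $d + d\eta - \eta d$, which equals $\Cb_D(D_\bullet^k) + \Cb_D(S_\circ) \circ \eta$, vanishes identically.

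The construction of $\eta$ is a matrix-level inversion using the explicit formulas in \cref{prop:CCobs} for $\Cb_D(S_\circ)$ and $\Cb_D(D_\bullet^k)$. The matrix $\Cb_D(S_\circ)$ contains identity morphisms inherited from the delooping in \cref{eq:CObj}, and these entries serve as pivots allowing the equation $\Cb_D(S_\circ) \circ \eta = -\Cb_D(D_\bullet^k)$ to be solved. Since the support of $\Cb_D(D_\bullet^k)$ in the $3 \times 4$ grid is concentrated in the top and bottom rows, only a handful of entries of $\eta$ need to be specified; the parity of $k$ splits the construction into two cases matching the even/odd formulas in \cref{prop:CCobs}.

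Next, I verify the Clean-up Lemma hypotheses. The condition $\eta^2 = 0$ is immediate since the image summand $\Cb_D(x_1)$ is disjoint from the domain $\Cb_D(x_2)$. The condition $\eta(d\eta - \eta d) = 0$ reduces via $\eta^2 = 0$ to $\eta d \eta = 0$, which requires that $d$ have no component $\Cb_D(x_1) \to \Cb_D(x_2)$. This is where cap-triviality enters: by \cref{lem:rdleaf}, the vertex $x_1 = \circ$ cannot be a leaf of $\Rd^a(T)$ and so has valence $2$ in $\Rd(T)$; by \cref{lem:capTrivialConstraints}, its second edge cannot be outgoing (a second outgoing $\circ$-edge would create a forbidden $\circ$-elbow and the labels $D_\circ, SS_\circ$ are forbidden), so it must be an incoming $S_\bullet$ from some $\bullet$ vertex. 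Since $\Rd(T)$ is curve-like, i.e.\ its underlying graph is an embedded 1-manifold of valence at most $2$, no edge of $\Rd(T)$ directly connects $x_1$ to $x_2$, and hence $d_{\Cb_D(x_1) \to \Cb_D(x_2)} = 0$.

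Applying the Clean-up Lemma then produces a chain isomorphism to a complex in which the $\Cb_D(D_\bullet^k)$ component has been eliminated. To identify the resulting complex with $\Cb_D$ of the elbow with the $D_\bullet^k$ edge removed, I must show that any residual $\Cb_D(x_2) \to \Cb_D(x_1)$ component introduced by the clean-up, arising from internal differentials of $\Cb_D(x_1)$ or $\Cb_D(x_2)$ composed with $\eta$, is null-homotopic and can be erased by a subsidiary application of the Cancellation Lemma (\cref{lem:cancel}). The main obstacle I anticipate is the explicit matrix verification of $\Cb_D(S_\circ) \circ \eta = -\Cb_D(D_\bullet^k)$ together with the analysis of these residual terms, both complicated by the sign conventions in \cref{prop:CCobs} and the parity split for $k$.
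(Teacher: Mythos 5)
Your strategy — applying the Clean-up Lemma with a single homotopy $\eta \colon \Cb_D(x_2) \to \Cb_D(x_1)$ chosen so that $\Cb_D(S_\circ)\circ\eta = -\Cb_D(D_\bullet^k)$ — differs fundamentally from the paper's, which first cancels the internal identity isomorphisms in $\Cb_D(\bullet)$ and $\Cb_D(\circ)$ (reducing to the smaller $\Cb_F$-type complexes) and only then performs clean-ups, with an accompanying case analysis on the local shape of $\Rd(T)$ near the elbow. Unfortunately your approach has a genuine gap: the required $\eta$ does not exist. The issue is visible already in the $(1,1)$ position of the $3\times 4$ grid. Since $\Cb_D$ acts position-diagonally, the $(1,1),(1,1)$ component of $\Cb_D(S_\circ)\circ\eta$ is forced to be $\Cb_D(S_\circ)_{(1,1)}\circ\eta_{(1,1),(1,1)}$ with no cross-terms. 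Now $\Cb_D(S_\circ)_{(1,1)} = \col{1}{SS_\bullet}$, and for any $1\times 2$ matrix $\eta_{(1,1)} = \row{a}{b}$ one gets
\[
\col{1}{SS_\bullet}\circ \row{a}{b}
\;=\;
\mat{a}{b}{SS_\bullet a}{SS_\bullet b}.
\]
For $k = 1$ we need this to equal $-\Cb_D(D_\bullet)_{(1,1)} = \mat{-SS_\bullet}{1}{0}{-D_\bullet}$. Matching the top row forces $a = -SS_\bullet$ and $b = 1$, which makes the bottom row $\row{-SS_\bullet^2}{SS_\bullet} = \row{-G\cdot SS_\bullet}{SS_\bullet}$, whereas we require $\row{0}{-D_\bullet}$. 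Since $\Hom_\mc{B}(\bullet,\bullet) = R[G_\bullet]\langle 1_\bullet, D_\bullet\rangle$ with $SS_\bullet = G_\bullet + D_\bullet$, neither entry matches; in particular $D_\bullet$ is not in the ideal generated by $SS_\bullet$. The same obstruction persists for all $k$. So the claim that the identity entries in $\Cb_D(S_\circ)$ ``serve as pivots allowing the equation to be solved'' is simply false at this first block.

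The underlying reason is structural: before the delooping isomorphisms internal to $\Cb_D(\bullet)$ and $\Cb_D(\circ)$ are cancelled, $\Cb_D(D_\bullet^k)$ is not homotopically trivial as a self-map component — one must first collapse the rank-$2$ summands (passing through $\Cb_{F^{pre}}$ and $\Cb_F$) for the $D_\bullet^k$ edge to become removable. That cancellation step introduces new (``long'') differentials and reshapes the relevant maps, and it is precisely there that the paper's case analysis (Cases (i)--(iii), determined from \cref{lem:rdleaf,lem:capTrivialConstraints}) becomes necessary: different local configurations produce different induced differentials after cancellation, and the clean-ups have to be organized case by case. Your valence/cap-triviality reasoning for ruling out a $\Cb_D(x_1)\to\Cb_D(x_2)$ arrow is correct and is indeed part of what makes the paper's argument go through, but by itself it does not compensate for the missing cancellation stage.
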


We stress that the statement is not about the effect of the operator $\Cb$ on the specific type D structures $\bullet \xra{D^k_\bullet} \bullet \xleftarrow{S_\circ} \circ$ or $\bullet \xleftarrow{D^k_\bullet} \bullet \xra{S_\circ} \circ$ (which are simple instances of our lemma and, where the splitting follows from a straightforward computation), but rather about the effect of $\Cb$ on a type D structure that contains one of the aforementioned type D structures as a subgraph.


\begin{proof} The proof is a case verification. First, we walk through the graph of $\Rd(T)$ to begin generating the rest the type D structure. If we do not encounter any leaves, then we claim that the graph of $\Rd(T)$ near the elbow is one of the following:
\begin{enumerate}
	\item[Case (i)] \[\begin{tikzcd}[column sep = small, row sep = tiny]
					& \odot \rar["S/SS"]	& \circ \rar["D"]	& \circ \ar[dr, "S"]		&	\\
					&					&			&					& \bullet \\
	\odot\rar["S/SS"]	& \circ \rar["D"]		& \circ \rar["S"]	& \bullet \ar[ur, "D^k"]	&
	\end{tikzcd}\]
	\item[Case (ii)]  \[\begin{tikzcd}[column sep = small, row sep = tiny]
					&			&						& \circ \rar["D"]	& \circ \rar["S/SS"]	& \odot	\\
					&			& \bullet \ar[dr, "D^k"]\ar[ur, "S"]	&			&				& 		\\
					&			&						& \bullet 		&				& 		\\
	\odot \rar["S/SS"]	& \circ \rar["D"]	&\circ \ar[ur, "S"]			&			&				&
	\end{tikzcd}\]
	\item[Case (iii)] \[\begin{tikzcd}[column sep = small, row sep = tiny]
								& \circ \rar["D"]		& \circ \rar["S/SS"]	&\Cb_D(\odot)		& \\
		\bullet \ar[ur, "S"]\ar[dr, "D^k"]	&				&				& 				& \\
								& \bullet \rar["S"]	&\circ \rar["D"]		& \circ \rar["S/SS"]	&\Cb_D(\odot)	
	\end{tikzcd}\]
\end{enumerate}
If we do encounter leaves, then we use the same model, but with one of the arrows above labelled with the 0 cobordism; the analysis in this case is simpler. The claim follows from \cref{lem:capTrivialConstraints} and the fact that $\Rd(T)$ is curve-like, i.e. its graph has degree $2$ at each vertex that is not a leaf. Let us spell this out in one instance and trust that the reader can fill in the missing casework. Suppose that we have on our hands an elbow of the form 
\[\begin{tikzcd}[column sep = small, row sep = tiny, cramped] 
	\circ \ar[dr, "S_\circ"]			&	\\
							& \bullet \\	
	\bullet \ar[ur, "D_\bullet^k"]	& 
\end{tikzcd}\]
and let us walk through the graph $\Rd(T)$ starting at the vertex $\circ$. Since there are no $\circ$-elbows and the differential squares to 0, every subgraph $\circ \xra{S_\circ} \bullet$ sits inside a subgraph $ \xra{*} \circ \xra{D_\circ} \circ \xra{S_\circ} \bullet$, where $* \in \{S_\bullet, SS_\circ\}$, assuming that neither of the two $\circ$ vertices are leaves. Likewise subgraphs $\bullet \xra{S_\bullet} \circ$ sit inside subgraphs $\bullet \xra{S_\bullet} \circ \xra{D_\circ} \circ \xra{*}$. Therefore, our elbow sits inside a subgraph
\[\begin{tikzcd}[column sep = small, row sep = tiny, cramped] 
					& \odot \rar["S/SS"]	& \circ \rar["D"]	& \circ \ar[dr, "S"]		&	\\
					&				&			&					& \bullet \\
					&				&			& \bullet \ar[ur,"D^k"]		&
\end{tikzcd}\]
Let us now walk away from the other end of the elbow, namely the $\bullet$ end. This vertex is either an elbow or not. Suppose not (this is where the argument splits into casework that we entrust to the reader). Then the same \cref{lem:capTrivialConstraints} tells us that there do not exist arrows labelled $SS_\bullet$ inside type D invariants of cap-trivial tangles, so our elbow necessarily sits as a subgraph of
\[\begin{tikzcd}[column sep = small, row sep = tiny, cramped] 
					& \odot \rar["S/SS"]	& \circ \rar["D"]	& \circ \ar[dr, "S"]		&	\\
					&				&			&					& \bullet \\
					&				& \circ \rar["S"]	& \bullet \ar[ur,"D^k"]		&
\end{tikzcd}\]
Applying the same reasoning we did for the walk away from the $\circ$ vertex in the elbow, we see that we are in case (i). 

Assuming the claim, the rest of the proof is a simplification of $\Cb_D(\Rd(T))$ in each of the three cases, by using Lemmas \ref{lem:cancel} and \ref{lem:cleanUp} and the time-tested technique of the diagram chase.

\subsubsection*{\underline{Case (i)}:}

In this case, $\Cb_D(\Rd(T))$ contains the subquotient complex
\[\begin{tikzcd}[row sep = tiny]
	& 							& \Cb_D(\odot)\rar["\Cb_D(S/SS)"]	& \Cb_D(\circ) \rar["\Cb_D(D)"]	& \Cb_D(\circ) \ar[dr, "\Cb_D(S)"]	& \\
	&							&							&						&							& \Cb_D(\bullet) \\
	&\Cb_D(\odot)\rar["\Cb_D(S/SS)"]	& \Cb_D(\circ) \rar["\Cb_D(D)"]		& \Cb_D(\circ) \rar["\Cb_D(S)"]	& \Cb_D(\bullet) \ar[ur, "\Cb_D(D^k)"']&
\end{tikzcd}\]

We use \cref{lem:cancel} to cancel the isomorphisms internal to each copy of $\Cb_D(\circ)$, like we did in \cref{eg:CoRes}. The resulting diagram chase yields the following type D structure (note in particular, that the induced long differential from $\Cb_D(\odot)$ to $\Cb_{F^{pre}}(\circ)$ vanishes):

\[\begin{tikzcd}[row sep = tiny]
	&	&							& \Cb_D(\odot)\rar["\Cb_D(S/SS)_*"] 							& \Cb_{F^{pre}}(\circ) \rar["\Cb_{F^{pre}}(D)"]	\ar[rrd, bend right=20]	& \Cb_{F^{pre}}(\circ) \ar[dr, "\Cb_D(S)_*"]	& \\
	&	&							&															&													&								& \Cb_D(\bullet) \\
	&	&\Cb_D(\odot)\rar["\Cb_D(S/SS)_*"]	& \Cb_{F^{pre}}(\circ) \rar["\Cb_{F^{pre}}(D)"]\ar[rr, bend right, "\Cb_D(S|D)_*"]	& \Cb_{F^{pre}}(\circ) \rar["\Cb_D(S)_*"]						& \Cb_D(\bullet) \ar[ur, "\Cb_D(D^k)"']	&
\end{tikzcd}\]

Continuing with a cancellation along the isomorphism in the subgraphs $\Cb_D(\bullet)$, we obtain the following type D structures, where the morphisms are described in \cref{fig:elbowi1}; note importantly that the map $\beta$ therein is either 0 or $\col{0}{0}$, depending on the value of $\odot$.

\begin{equation}\begin{tikzcd}[row sep = tiny]\label{eq:elbowi1}
	&	&						&\rar["\Cb_D(S/SS)_*"]									& \Cb_{F^{pre}}(\circ) \rar["\Cb_{F^{pre}}(D)"]								& \Cb_{F^{pre}}(\circ) \ar[rd, "\Cb_F(S)"]		&	\\
	&	&						&													&															&									& \Cb_F(\bullet) \\
	&	& \rar["\Cb_{F^{pre}}(S/SS)"]	& \Cb_{F^{pre}}(\circ) \rar["\Cb_{F^{pre}}(D)"]	\ar[rrru, bend right=55]	& \Cb_{F^{pre}}(\circ) \rar["\Cb_F(S)"]\ar[rru, "\Cb_F(D^k|S)", bend right=45]	& \Cb_F(\bullet)	 \ar[ur, "\Cb_F(D^k)"']		&
\end{tikzcd}\end{equation}

\begin{figure}[h] 
\centering
\includegraphics[width=\textwidth]{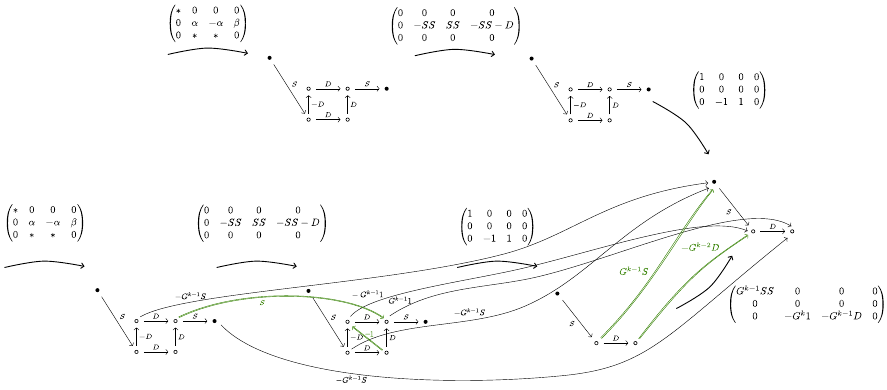}
\caption{Full description of the type D structure in \cref{eq:elbowi1}; the map $\beta$ is $0$ or $\left(\begin{array}{c}0\\ 0\end{array}\right)$.}
\label{fig:elbowi1}
\end{figure}

Cleaning-up along the doubled green arrows indicated in \cref{fig:elbowi1}, and following with the clean-ups indicated in \cref{fig:elbowi2} 
completes the proof of case (i).


\begin{figure}
\centering
\includegraphics[width=\textwidth]{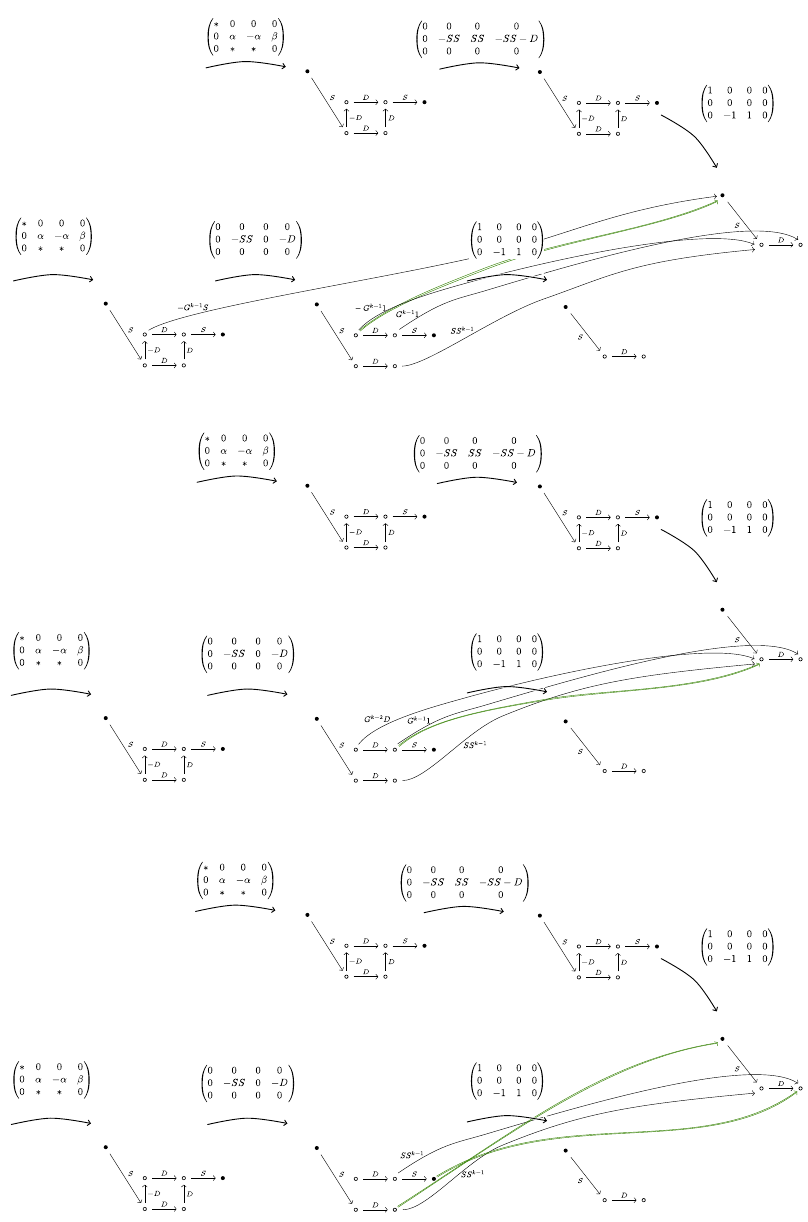}
\caption{The clean-up required to prove case (i).}
\label{fig:elbowi2}
\end{figure}

\pagebreak
\subsubsection*{\underline{Case (ii)}:}

In this case, $\Cb_D(T)$ contains the subquotient
 \[\begin{tikzcd}[row sep = tiny]
							&						&											& \Cb_D(\circ) \rar["\Cb_D(D)"]	& \Cb_D(\circ) \rar["\Cb_D(S/SS)"]	& \Cb_D(\odot)\\
							&						& \Cb_D(\bullet) \ar[dr, "\Cb_D(D^k)"]\ar[ur, "\Cb_D(S)"]	&						&							& \\
							&						&											& \Cb_D(\bullet) 			&							& \\
\Cb_D(\odot)\rar["\Cb_D(S/SS)"]	& \Cb_D(\circ) \rar["\Cb_D(D)"]	& \Cb_D(\circ) \ar[ur, "\Cb_D(S)"]					&						&							&
\end{tikzcd}\]

It turns out that for this case, we only need to work with the subquotient

 \[\begin{tikzcd}[row sep = tiny]
												& \Cb_D(\circ)		\\
	 \Cb_D(\bullet) \ar[dr, "\Cb_D(D^k)"]\ar[ur, "\Cb_D(S)"]	&				\\
												& \Cb_D(\bullet) 	\\
	 \Cb_D(\circ) \ar[ur, "\Cb_D(S)"]						&
\end{tikzcd}\]

Cancelling (in two steps, as always) the isomorphisms internal to $\Cb_D(\bullet)$ results in the type D structure in \cref{fig:elbowii}:

\begin{figure}[h]
	\centering
	\includegraphics[width=\textwidth]{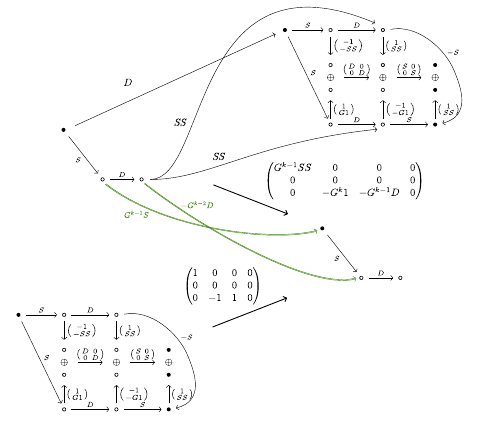}
	\caption{Clean-up of the type D structure in case (ii).}
	\label{fig:elbowii}
\end{figure}

In case $k\geq 2$, cleaning up along the indicated doubled green arrows completes the proof of case (ii) of the elbow lemma. If $k=1$, replacing the morphism $-D^{k-2}$ with the identity yields the same clean-up result.

\subsubsection*{\underline{Case (iii)}:}

In this case, $\Cb_D(T)$ contains the subquotient
\[\begin{tikzcd}[row sep = tiny]
													& \Cb_D(\circ) \rar["\Cb_D(D)"]		& \Cb_D(\circ) \rar["\Cb_D(S/SS)"]	&\Cb_D(\odot)					& 			\\
		\Cb_D(\bullet) \ar[ur, "\Cb_D(S)"]\ar[dr, "\Cb_D(D^k)"]	&							&							& 							& 			\\
													& \Cb_D(\bullet) \rar["\Cb_D(S)"]	& \Cb_D(\circ) \rar["\Cb_D(D)"]		& \Cb_D(\circ) \rar["\Cb_D(S/SS)"]	&\Cb_D(\odot)	
\end{tikzcd}\]

As in case (ii), we can restrict our attention to a smaller subquotient. Cancelling the isomorphisms internal to $\Cb_D(\bullet)$ yields the following type D structure (\cref{fig:elbowiii}), which cleans up as in case (ii):

\begin{figure}[h]
	\centering
	\includegraphics[width=\textwidth]{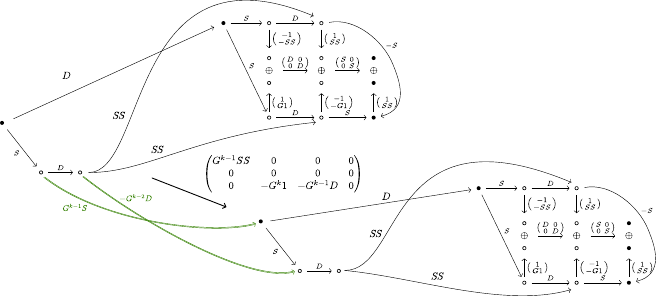}
	\caption{Clean-up of the type D structure in case (iii).}
	\label{fig:elbowiii}
\end{figure}

This completes the proof of the elbow lemma.
\end{proof}



We can restate the elbow lemma in terms of a simpler category of type D structures. \label{def:tightAtD} Given a type D structure $\Rd$, consider the type D structure $\Rd|_{D_\bullet = 0}$, obtained from $\Rd$ by formally setting the $D_\bullet$-labelled arrows to 0. Then the elbow lemma is equivalent to:

\begin{prop}\label{thm:factorization} Let $T$ be cap-trivial. Then
\[\Cb(\Rd(T)) = \Cb(\Rd(T)|_{D_\bullet = 0}).\]
\end{prop}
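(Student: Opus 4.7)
The plan is to iterate the elbow lemma (\cref{lem:elbow}) in order to remove every $D_\bullet$-labelled edge from the curve-like representative of $\Rd(T)$, without altering the chain homotopy type of $\Cb(\Rd(T))$.

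The first step is a structural claim: for $T$ cap-trivial and for any curve-like representative of $\Rd(T)$, each $D_\bullet^k$-labelled edge whose endpoints include at least one non-leaf vertex sits inside an elbow of one of the two shapes in \cref{lem:elbow}. This follows from the parametrization of $S^2_{4,*}$ in \cref{fig:S2param} together with \cref{lem:capTrivialConstraints}. Locally, the $\bullet$-parametrizing arc has on one side the region (call it $D_1$) containing the quiver loop $D_\bullet$, and on the other side the region (call it $D_3$) containing the arrows $S_\bullet$ and $S_\circ$. Since every vertex of the curve-like representative is an intersection of the immersed curve with a parametrizing arc, the two incident edges at such a vertex lie on opposite sides of that arc. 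At a $\bullet$-vertex of degree two, the two edges must therefore be one $D_\bullet^n$-edge (in $D_1$) and one $S$-type edge (in $D_3$); by \cref{lem:capTrivialConstraints}, the $S$-type edge is in fact labelled $S_\bullet$ or $S_\circ$ (higher powers being forbidden), so the local picture is precisely an elbow.

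Given this, I would apply the elbow lemma repeatedly: each application yields a chain homotopy equivalence
\[\Cb(\Rd') \htp \Cb(\Rd' \setminus e),\]
where $\Rd'$ is the current complex and $e$ is any $D_\bullet^k$-edge with a non-leaf endpoint. After finitely many iterations, all such edges have been removed, and the resulting complex coincides with $\Rd(T)|_{D_\bullet = 0}$ except possibly for isolated $D_\bullet^k$-components.

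The main obstacle is this last edge case: isolated connected components of the form $\bullet \xra{D_\bullet^k} \bullet$ with both endpoints leaves, which by \cref{lem:rdleaf} can in principle occur as the non-compact component $\Rd^a(T)$. The elbow lemma does not apply to these since there is no adjacent $S$-edge. For such a component I would carry out a direct cancellation-and-clean-up computation on $\Cb_D(\bullet) \xra{\Cb_D(D_\bullet^k)} \Cb_D(\bullet)$ using the explicit matrix for $\Cb_D(D_\bullet^k)$ in \cref{prop:CCobs}; this should be a simplified version of the calculation underlying the elbow lemma itself, with the same cancellations internal to each copy of $\Cb_D(\bullet)$ followed by a clean-up along a $G$-weighted diagonal morphism.
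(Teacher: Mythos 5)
Your overall strategy — iterate the elbow lemma to strip away $D_\bullet^k$-edges one at a time, then deal separately with an isolated $\bullet \xrightarrow{D_\bullet^k} \bullet$ component — is exactly how the paper intends the statement to follow from \cref{lem:elbow} (the paper simply declares the two to be ``equivalent'' and gives no further argument).

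There is, however, a genuine gap in your structural claim. You argue that at a degree-two $\bullet$-vertex the two incident edges lie on opposite sides of the $\bullet$-parametrizing arc, so one is a $D_\bullet^k$-edge and the other is an $S$-type edge, and you conclude that ``the local picture is precisely an elbow.'' This conclusion does not follow: an \emph{elbow} in the sense used by the paper (and required as the hypothesis of \cref{lem:elbow}) is a vertex at which both incident arrows are incoming or both are outgoing. A $\bullet$-vertex with one $D$-type edge and one $S$-type edge can instead be a \emph{straight-through}, i.e.\ $\circ \xrightarrow{S_\circ} \bullet \xrightarrow{D_\bullet^k} \bullet$ or $\bullet \xrightarrow{D_\bullet^k} \bullet \xrightarrow{S_\bullet} \circ$; note that $D_\bullet S_\circ = 0$ and $S_\bullet D_\bullet = 0$, so $d^2 = 0$ does not obstruct either. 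The three cases (i)--(iii) in the proof of \cref{lem:elbow} all assume that at least one endpoint of the $D_\bullet^k$-edge is an elbow; they cover ``source straight-through, target elbow,'' ``both elbows,'' and ``source elbow, target straight-through,'' respectively. The fourth possibility, $\circ \xrightarrow{S_\circ} \bullet \xrightarrow{D_\bullet^k} \bullet \xrightarrow{S_\bullet} \circ$ with both endpoints straight-throughs, is not one of the hypotheses of \cref{lem:elbow}, and your argument from the peg-board geometry says nothing about it because it never discusses arrow orientations.

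To close the gap you must do one of two things. Either (a) show that cap-triviality forbids the both-straight-through configuration — the forbidden-configurations list in \cref{lem:capTrivialConstraints} does not directly do this, so one would have to extract it from the pairing with $\mathbf{a}_\infty$ or from a grading argument — or (b) add a ``Case (iv)'' computation parallel to Case (ii), showing that $\Cb_D$ applied to the subquotient
\[
\Cb_D(\circ) \xrightarrow{\Cb_D(S_\circ)} \Cb_D(\bullet) \xrightarrow{\Cb_D(D_\bullet^k)} \Cb_D(\bullet) \xrightarrow{\Cb_D(S_\bullet)} \Cb_D(\circ)
\]
still splits across the $D_\bullet^k$-arrow after cancellation and clean-up. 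Your handling of the isolated $\bullet \xrightarrow{D_\bullet^k} \bullet$ component is identified correctly, and the iteration-plus-leaf bookkeeping is otherwise fine.
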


\begin{defn}\label{def:tightAtD} Given an immersed curve $\wt{\BN}(T) \in \Fuk(S^2_{4,*})$, let $\Rd(T)$ be the corresponding curve-like type D structure. The immersed curve $\wt{\BN}(T)|_{D_\bullet=0}$ is defined to be the embedding of $\Rd(T)|_{D_\bullet = 0}$ in $S^2_{4,*}$ according to \cref{con:immersion}. 
\end{defn}

It is not hard to see that the above definition agrees with the one given in the introduction. The previous proposition implies \cref{thm:curveFactorization}:

\begin{cor*}
Let $T$ be cap-trivial, then
\[\Cb(\wt{\BN}(T)) = \Cb(\wt{\BN}(T)|_{D_\bullet=0}).\]
\end{cor*}

\begin{defn} Let $\Rd(T)$ be the curve-like type D structure invariant of a cap-trivial tangle $T$. A \textbf{homogeneous chain} is a connected subgraph of $\Rd(T)$ that contains no $\bullet$ vertices. The number of vertices in this subgraph is the length of the homogeneous chain.
\end{defn}

\begin{rmk} Since there are no $\circ$-elbows, maximal homogeneous chains are necessarily of the form
\[\circ \xra{D_\circ} \cdots \xra{D_\circ} \circ,\]
where the edge labels alternate between $D_\circ$ and $SS_\circ$ and the number of vertices is even. Moreover, by \cref{lem:capTrivialConstraints}, every homogeneous chain that does not contain a leaf is a subgraph of the following
\[\bullet \xra{S_\bullet} \circ \xra{D_\circ} \cdots \xra{D_\circ} \circ \xra{S_\circ} \bullet,\]
where each $\bullet$ endpoint is the corner of a $\bullet$-elbow.
\end{rmk}

We can now state the second lemma of this section. We continue using the notation in \cref{eq:2Compact}.

\begin{lem}[Homogeneous chain lemma]\label{lem:homogChain} Let $\Rd(T)$ be the curve-like type D structure of a cap-trivial tangle. Suppose we have a homogeneous chain contained in the following type D subgraph
\[\begin{tikzcd}[row sep=tiny]
	\prescript{q}{}{\bullet_h} \rar["S"] 	&\circ \rar["D"] 	&\dots \rar["D"]	 &\circ \rar["S"]	&\bullet
\end{tikzcd}\]
Then the cabling operator applied to this subgraph yields a type D structure that is homotopic to
\[\underbrace{\prescript{q+4}{}{C_{h+2}} \oplus \prescript{q+8}{}{C_{h+4}}\dots \oplus \prescript{q+2k}{}{C_{h+k}}}_{\frac{k}{2}}, \]
where $k$ is the length of the homogeneous chain.
\end{lem}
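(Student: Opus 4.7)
The plan is to directly mimic the unknot cabling computation of \cref{subsec:unknot}, adapted to the setting where both ends of the homogeneous chain terminate in $\bullet$-vertices (coming from adjacent $\bullet$-elbows) rather than in a leaf.

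First, by \cref{thm:factorization}, applying $\Cb$ to $\Rd(T)$ coincides with applying it to $\Rd(T)|_{D_\bullet=0}$. In particular, any $D_\bullet$-labelled edges incident to the two $\bullet$-vertices that bookend the chain contribute nothing to the output. Hence I may isolate the chain together with its $\bullet$-endpoints and compute $\Cb$ on the auxiliary type D structure
\[
\mathcal{C}_k \; := \; \bigl( \prescript{q}{}{\bullet_h} \xra{S_\bullet} \circ \xra{D_\circ} \circ \xra{SS_\circ} \circ \xra{D_\circ} \cdots \xra{D_\circ} \circ \xra{S_\circ} \bullet \bigr),
\]
with $k$ interior $\circ$-vertices (recall that $k$ must be even).

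Second, I expand $\Cb_D(\mathcal{C}_k)$ by substituting \cref{eq:CObj} at each vertex and the matrices of \cref{prop:CCobs} at each edge. Each $\Cb_D(\circ)$ and $\Cb_D(\bullet)$ contains a pair of delooping identity morphisms; I apply \cref{lem:cancel} to cancel them, exactly as in \cref{eg:CoRes,eg:CiRes} and in the explicit diagrams of \cref{subsec:unknot}. That subsection already carries out this cancellation and clean-up procedure for a chain of exactly the same internal shape, with a $\bullet$ on the left but a free $\circ$-end on the right. Closing off that $\circ$-end with the column of $\Cb_D(S_\circ)$ from \cref{prop:CCobs} simply attaches the terminal $\bullet$ of one last $C$-block. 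A final pass of \cref{lem:cleanUp} along the diagonals highlighted in the unknot figures then exhibits $\Cb_D(\mathcal{C}_k)$ as a direct sum of $k/2$ consecutive copies of the template $C$ of \cref{eq:2Compact}, with bigrading shifts $(q+2j,\ h+j)$ for $j = 2, 4, \dots, k$.

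The hard part is pure bookkeeping: one must verify that the long differentials produced by iterated cancellation either vanish or slot in exactly as one of the internal arrows of a $C$-block, with no residual arrow connecting two distinct $C$-summands. This reduces to a local sign-and-grading check at each $D_\circ$-$SS_\circ$-$D_\circ$ triple in the chain, plus an endpoint check at each $\bullet$-to-$\circ$ saddle. By the first step, the $\bullet$-endpoints carry no $D_\bullet$-action, so the endpoint checks are clean; the interior checks are the same ones already carried out in \cref{subsec:unknot}. The precise bigrading shifts then follow from \cref{def:cobQuantGr} together with the shifts already built into \cref{eq:CObj}.
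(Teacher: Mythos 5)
Your proposal is correct and follows essentially the same route as the paper: the paper also first splits off the chain (it invokes the Elbow Lemma directly, which it has already shown is equivalent to \cref{thm:factorization}), then substitutes the model $\Cb_D$, cancels the delooping isomorphisms while noting that no long maps escape $\Cb_D(\bullet)$, and finishes the identification of the $C$-summands by the same cancellation and clean-up that appear in \cref{subsec:unknot}. The only differences are cosmetic: you cite the $D_\bullet=0$ factorization where the paper cites the Elbow Lemma, and you treat the unknot computation as a black box to be "closed off" with $\Cb_D(S_\circ)$, whereas the paper records the intermediate $\Cb_F^{pre}$-morphisms explicitly before the final clean-up.
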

\begin{proof} The proof is as in \cref{subsec:unknot}: a diagram chase using \cref{lem:cancel,lem:cleanUp}. By the preceding remark, we may apply the elbow lemma to conclude that the type D structure splits as
\[\Cb(\Rd(T)) \simeq \Cb(\prescript{q}{}{\bullet_h} \xra{S} 	\circ \xra{D} 	\dots \xra{D}	 \circ \xra{S}	\bullet) \oplus \Rd'(T).\]
Thus, up to homotopy, $\Cb(\prescript{q}{}{\bullet_h} \xra{S} 	\circ \xra{D} 	\dots \xra{D}	 \circ \xra{S}	\bullet)$ is a sub-object of $\Rd(T)$. We apply our model $\Cb_D$ and notice that, in the type D structure
\[\begin{tikzcd}[row sep=tiny]
	\Cb_D(\prescript{q}{}{\bullet_h}) \rar["\Cb_D(S)"] 	&\Cb_D(\circ) \rar["\Cb_D(D)"] 	&\dots \rar["\Cb_D(D)"]	 &\Cb_D(\circ) \rar["\Cb_D(S)"]	&\Cb_D(\bullet),
\end{tikzcd}\]
canceling isomorphisms internal to $\Cb_D(\circ)$ does not result in any long induced maps out of $\Cb_D(\bullet)$, so the above type D structure is homotopic to
\[\begin{tikzcd}[row sep=tiny]
	\Cb_F(\prescript{q}{}{\bullet_h}) \rar["{\Cb_F(S)}"] 	&\Cb_F^{pre}(\circ) \rar["\Cb_F^{pre}(D)"] 	&\dots \rar["\Cb_F^{pre}(D)"] \ar[rr, bend right]	 &\Cb_F^{pre}(\circ) \rar["{\Cb_F(S)}"]	&\Cb_F(\bullet),
\end{tikzcd}\]
where the indicated maps are given by
\[\begin{split}
	\Cb_F(S_\bullet)		&= \begin{pmatrix} D_\bullet	&0	&0	&0	\\ 0	&0	&0	&0	\\	0	&0	&SS_\circ	&0\end{pmatrix}	\\
	\Cb_F^{pre}(D_\circ)		&= \begin{pmatrix} 0	&0	&0	&0	\\ 0	&-SS		&SS	&-SS-D	\\	0	&0	&0	&0\end{pmatrix}\\
	\Cb_F^{pre}(SS_\circ)	&= \begin{pmatrix} SS+D	&0	&0	&0	\\ 0	&-D	&D	&0	\\	0	&-G1	&G1	&0\end{pmatrix}\\
	\Cb_F(S_\circ)			&= \begin{pmatrix} 1	&0	&0	&0	\\ 0	&0	&0	&0	\\	0	&-1	&1	&0\end{pmatrix}
\end{split}\]
Cancelling the isomorphism $\Cb_F(S_\circ)$ and cleaning up along the doubled green arrows indicated below (in the case that the homogeneous chain has length 6) concludes the proof of the homogeneous chain lemma. 
\begin{figure}[h]
	\centering
	\includegraphics[width=\textwidth]{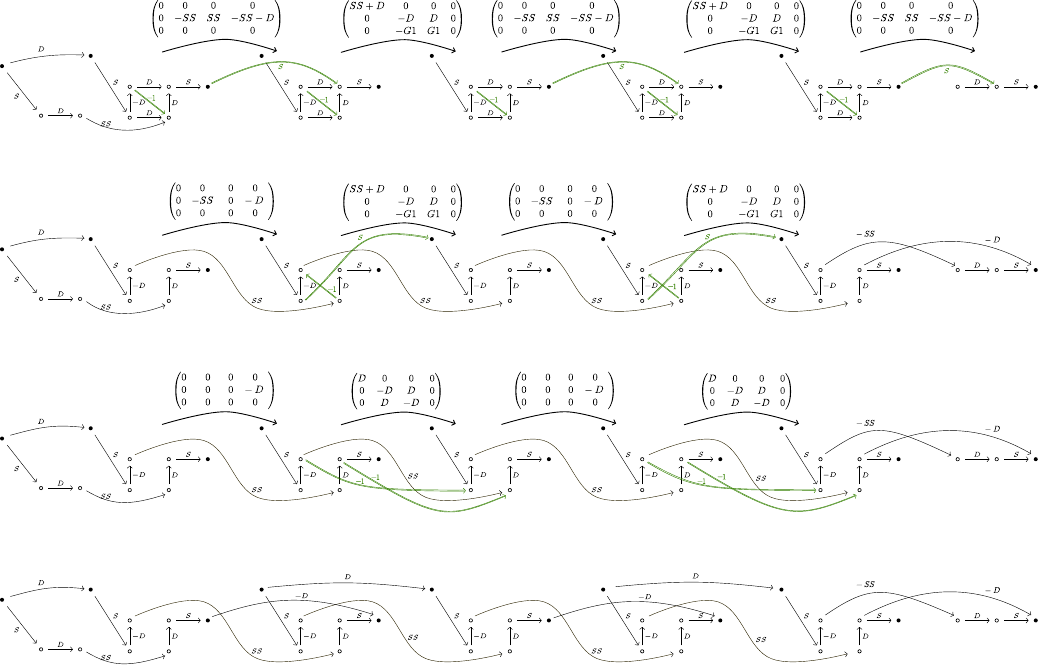}
	\caption{Clean-up in the proof of \cref{lem:homogChain}.}
\end{figure}
\end{proof}
\pagebreak
\subsection{Another example: the $(2,1)$-cables of the trefoil} \label{sec:trefoilComputation}

\begin{wrapfigure}[10]{R}{0.32\textwidth}
\includegraphics[scale=0.5]{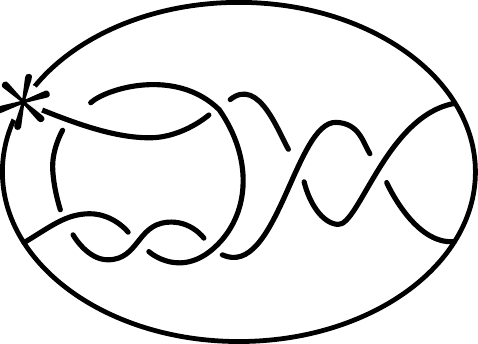}\caption{The tangle $T$.}
\label{fig:trefoilTangleMin}
\end{wrapfigure}

Let $T$ be the tangle associated with the strong inversion on the left-handed trefoil, framed so that $\Rd(T)$ has minimal rank; this particular framing is depicted in \cref{fig:trefoilTangleMin}. Let us compute $\Cb(\tau^nT)$, the Bar-Natan invariant of the tangle associated to the strong inversion on the $(2,1)$-cables of the left-handed trefoil, for $n\in \Z$; recall that $\tau$ is the Dehn twist in \cref{fig:DehnTwist}.

We have the following curve-like representative of $\Rd(\tau^nT)$, by \cref{thm:MCG}:
\[\Rd(\tau^nT) \simeq 
	\begin{cases}
		\begin{tikzcd}[column sep = small]
	&\circ \rar &\cdots \rar["SS"]	&\circ \rar["D"]	&\circ \rar["S"]	&\bullet				&			&				&			&			&	\\
	&		&				&			&			&\bullet \rar["S"] \uar["D"]	&\circ \rar["D"]	&\circ \rar["SS"]		&\circ \rar["D"]	&\circ \rar["S"]	&\bullet	
		\end{tikzcd} &\text{ if } n < 0\\
		\begin{tikzcd}[column sep = small]
			\bullet \rar["S"]			&\circ \rar["D"]	&\circ \rar["SS"]		& \cdots \rar	& \circ		&		\\
			\bullet \rar["S"] \uar["D"]	&\circ \rar["D"]	&\circ \rar["SS"]		&\circ \rar["D"]	&\circ \rar["S"]	&\bullet			
		\end{tikzcd} &\text{ if } n > 0\\
	\end{cases}		
\]
where the ambiguous homogeneous chain has length $|n|$.
By the elbow lemma, we have
\[\Cb(\Rd(\tau^nT)) \simeq 
\begin{tikzcd}
	\Cb(\bullet) \rar["\Cb(S_\bullet)"]	&\Cb(\circ) \rar["\Cb(D_\circ)"]	&\Cb(\circ) \rar["\Cb(SS_\circ)"]	&\Cb(\circ) \rar["\Cb(D_\circ)"]	&\Cb(\circ) \rar["\Cb(S_\circ)"]	&\Cb(\bullet)
\end{tikzcd}
\oplus \Cb(\tau^n \bullet).\]

By the homogeneous chain lemma, up to grading, the resulting type D structure consists of 2 copies of the compact curve $C$, together with the invariant associated with a $(2,1)$-cable of the $n$-framed unknot. 

In terms of immersed curves, the action of our operator $\Cb$ is given in \cref{fig:cabTrefoilFactorization}.

Finally, we include a proof of \cref{thm:geography}.

\begin{thm} Given a cap-trivial tangle $T$, the invariant $\Cb(\wt{\BN}(T; \F_c))$ is equal to a number of compact curves and one non-compact curve that is, up to mirroring and framing, one of
\[\wt{\BN}(T_{3_1}) \qquad \text{or} \qquad \wt{\BN}(\oRes).\]
\end{thm}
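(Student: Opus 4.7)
The plan is to chain together the structural results of this section. First, by Theorem~\ref{thm:curveFactorization}, $\Cb(\wt{\BN}(T)) = \Cb(\wt{\BN}(T)|_{D_\bullet=0})$, so I may replace the input by its pulled-tight representative. Since the conclusion tolerates framing and mirroring, I additionally apply an appropriate $\tau^n$ so that Lemma~\ref{lem:rdleaf} puts both leaves of the non-compact curve in canonical $\bullet$ position, with at least one leaf connected to the rest by a $D_\bullet$ arrow.

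Using the edge restrictions of Lemma~\ref{lem:capTrivialConstraints}---no $\circ$-elbows, and only the six allowed algebra generators---the pulled-tight non-compact invariant $\Rd^a(T)|_{D_\bullet=0}$ decomposes into a disjoint union of (a) pure homogeneous chains bordered by $\bullet$ vertices via $S$-arrows of the form $\bullet \xra{S_\bullet} \circ \xra{D_\circ/SS_\circ} \cdots \xra{S_\circ} \bullet$, and (b) a single remaining non-homogeneous piece equivalent, up to framing, to $\tau^m \bullet$ for some $m \in \Z$, encoding the two leaf $\bullet$'s together with any interior $\bullet$-zones joined by $D_\bullet$'s. By the Homogeneous Chain Lemma~\ref{lem:homogChain}, $\Cb$ sends each chain in (a) to a sum of compact curves $C$, so every non-compact contribution comes from piece (b). The explicit computation of $\Cb(\tau^m \bullet) = \Rd(T_m)$ in Section~\ref{subsec:unknot} then shows that the non-compact part of $\Rd(T_m)$ is the 3-vertex chain $\bullet \xra{S_\bullet} \circ \xra{D_\circ} \circ$ when $m$ is even, and the trefoil-like branched graph of Figure~\ref{fig:singint2} when $m$ is odd. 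Under Construction~\ref{con:immersion}, these immerse as $\wt{\BN}(\oRes)$ and $\wt{\BN}(T_{3_1})$ respectively, up to framing and (for $m < 0$) mirroring, completing the proof.

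The main obstacle is the claim that the pulled-tight decomposition produces only one non-homogeneous piece, since otherwise multiple non-compact curves would appear in $\Cb(\wt{\BN}(T))$. Verifying this requires checking that the entire ``$D_\bullet$-connected backbone'' of $\Rd^a(T)$, after pulling tight and applying $\Cb$, collapses into a single $\tau^m \bullet$ contribution---a fact I expect to follow from careful bookkeeping of how the leaf $\bullet$-vertices sit on the $\bullet$-parametrizing arc in $S^2_{4,*}$, combined with the $\tau$-equivariance of $\Cb$ modulo compact curves.
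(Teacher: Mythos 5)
Your overall strategy is the same as the paper's: invoke the factorization theorem to replace $\wt{\BN}(T)$ by its pulled-tight representative, use Lemmas~\ref{lem:rdleaf} and~\ref{lem:capTrivialConstraints} to constrain the pieces, dispatch the homogeneous chains via Lemma~\ref{lem:homogChain}, and read off the non-compact component from the model computation in Section~\ref{subsec:unknot}. You are slightly more explicit than the paper in citing the homogeneous chain lemma, which is fine.

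However, there are two problems worth fixing. First, the ``apply an appropriate $\tau^n$'' step at the outset is not a valid WLOG: $\Cb$ does not commute with the Dehn twist $\tau$ (it is not a reframing operator on the output), and the model computation you yourself cite shows that the non-compact piece of $\Cb(\tau^m\bullet)$ actually alternates with the parity of $m$. Thus reframing the input $T$ can switch which of $\wt{\BN}(T_{3_1})$ or $\wt{\BN}(\oRes)$ you land on, and you cannot normalize the input's framing away. You later treat the exponent $m$ as arbitrary, which is the right instinct, but this makes the opening normalization both unnecessary and misleading. Second, the sentence describing piece (b) as ``encoding the two leaf $\bullet$'s together with any interior $\bullet$-zones joined by $D_\bullet$'s'' cannot be right: after pulling tight, the $D_\bullet$ edges are removed, so those $\bullet$'s are scattered into \emph{different} components. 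The unique non-homogeneous piece is simply the component carrying the original leaf not blocked off by $D_\bullet$ (or the isolated $\bullet$ when both leaves sit on a $D_\bullet$ edge), and the other $\bullet$'s bound homogeneous chains.

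You correctly identify the real sticking point---that the pulled-tight non-compact curve should contribute exactly one piece to which $\Cb$ assigns a non-compact output---and this is indeed what the paper leans on Lemma~\ref{lem:rdleaf} for (essentially that $\Rd^a(T)$ has a unique $D_\bullet$ edge, forced by cap-triviality via the pairing with $\mathbf{a}_\infty$). But the ``fact I expect to follow from \dots the $\tau$-equivariance of $\Cb$ modulo compact curves'' is not a workable avenue, precisely because $\Cb$ fails to be $\tau$-equivariant even modulo compact curves (again by the parity alternation in Section~\ref{subsec:unknot}). The missing justification should instead come from the combinatorial constraints in Lemmas~\ref{lem:rdleaf} and~\ref{lem:capTrivialConstraints}, as the paper indicates.
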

\begin{proof} By \cref{lem:rdleaf,lem:capTrivialConstraints}, the invariant $\wt{\BN}(T)|_{D_\bullet = 0}$ consists of a number (which may be 0) of arcs that pull tight in $\R^2 \setminus \left(\frac{1}{2}\Z \right)^2$ to the column containing the special puncture, together with $\wt{\BN}(Q_n)$, for some $n$. The cabling operator applied to this latter component is precisely the content of \cref{subsec:unknot}.
\end{proof}

\subsection{Lewark--Zibrowius' $\vartheta$-invariant}

In this section we discuss the $\vartheta$ invariant defined in \cite{LewZib22} and prove \cref{prop:CbRational}. 

As mentioned in the introduction, $\vartheta$ is $\Z$-valued concordance invariant that is related to Rasmussen's $s$-invariant (but which contains different information from it). As with the $s$-invariant, there is actually an infinite family of invariants $\{\vartheta_c : c \text{prime}\}$, which contains more information than any individual member: there are knots for which different choices of $c$ yield different values of $\vartheta_c$. It is thus generally interesting to keep track of the field characteristic. However, the following result is independent of the characteristic, and so we will omit it. The key property allowing for computation of $\vartheta(K)$ is that it can be extracted in a straightforward manner from the non-compact immersed curve invariant $\wt{\BN}^a(T_K)$, where $T_K$ is the Seifert-framed tangle associated with the obvious strong inversion on $K \# K$.


\begin{defn}[\!\!\cite{LewZib22}, Corollary 5.14] Given a knot $K$ in $S^3$,  the invariant is given by
\[\vartheta_c(K) = \lceil \sigma_c/2 \rceil,\]
where $\sigma_c$ is the slope of $\wt{\BN}^a(T_K; \F_c)$ near the bottom right tangle end.
\end{defn} 

It turns out that the following property is critical for the behaviour of $\vartheta_c$:

\begin{defn} A knot $K$ is said to be $\vartheta_c$-rational if $\wt{\BN}^a(T_K; \F_c)$ is rational, in the sense that, up to an overall shift in bigrading, it is equal to the invariant $\wt{\BN}(Q_n; \F_c)$, for some choice of $n$.
\end{defn}

In \cite{Mar25}, it is shown that $\vartheta_c$-rational knots $K$ in fact have $\vartheta_c(K) = 0$ for all $c$, a property conjectured in \cite{LewZib22}, where the authors also argue that most knots should not be $\vartheta_c$-rational, essentially because the property $\wt{\BN}^a(T) = \wt{\BN}(\oRes)$ should be rare. Consider the following expansion of the definition:

\begin{defn} We say that a cap-trivial tangle $T$ is $\vartheta$-rational if 
\[\wt{\BN}^a(T) = \wt{\BN}(\oRes).\]
\end{defn} 

Thus, a $\vartheta$-rational knot $K$ is one such that the tangle associated with the strong inversion on $K\#K$ is $\vartheta$-rational. Using our cabling operator, we find that $\vartheta$-rational tangles are easy to construct in infinite families:

\begin{proof}[Proof of \cref{prop:CbRational}] Let $T$ be any Seifert-framed cap-trivial tangle. Then, since $T(0)$ is a link, it follows that $\wt{\BN}^a(T)$ has its two ends at the bottom left and bottom right corners. In particular, $\wt{\BN}^a(T)\big|_{D_\bullet=0}$ consists of some homogeneous chain and the invariant $\wt{\BN}(Q_{2n})$, for some $n\in \Z$. From our model computation in \cref{subsec:unknot}, we see that $\Cb(T)$ is $\vartheta$-rational if $n \geq 0$ and $\Cb(\Cb(T))$ is $\vartheta$-rational if $n < 0$. This completes the proof.
\end{proof}

By considering how $\Cb^0(\wt{\BN}(T))$ differs from $\Cb(\wt{\BN}(T))$ on the tail of the invariant, the following is immediate:

\begin{cor} Let $T$ be cap-trivial Seifert framed. Then $\Cb^0(\Cb(0(T)))$ is Seifert framed $\vartheta$-rational.
\end{cor}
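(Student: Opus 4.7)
The plan is to combine Proposition~\ref{prop:detCb}, which controls Seifert framing, with the tail analysis from the proof of Proposition~\ref{prop:CbRational}, corrected by the framing twist that distinguishes $\Cb^0$ from $\Cb$. The Seifert-framed half of the conclusion is immediate: since Proposition~\ref{prop:detCb} asserts that $\Cb^0$ preserves the class of Seifert-framed cap-trivial tangles, $\Cb^0(\Cb^0(T))$ is automatically Seifert-framed cap-trivial whenever $T$ is. So the only real content is $\vartheta$-rationality.

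For $\vartheta$-rationality, I would retrace the proof of Proposition~\ref{prop:CbRational} but with $\Cb^0$ in place of $\Cb$. Using Lemmas~\ref{lem:rdleaf} and~\ref{lem:capTrivialConstraints} together with the Seifert-framed hypothesis (which forces $T(0)$ to have determinant zero, hence an even slope index), the non-compact component of $\wt{\BN}(T)|_{D_\bullet=0}$ must equal $\wt{\BN}(Q_{2n})$ for some $n\in\Z$. Writing $\Cb^0 = \tau^2\Cb$, the tail of $\Cb^0(T)$ is then obtained from the tail of $\Cb(T)$, which is read off from the explicit computation of Subsection~\ref{subsec:unknot}, by applying the Dehn twist $\tau^2$. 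Since Proposition~\ref{prop:detCb} forces $\Cb^0(T)$ to be Seifert-framed as well, its tail is again of the form $\wt{\BN}(Q_{2m})$ for some $m\in\Z$.

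The last step is to observe that a case-by-case inspection of Subsection~\ref{subsec:unknot} (the $n\ge0$ and $n<0$ cases) shows that, after correcting by $\tau^2$, the intermediate integer $m$ is always nonnegative. Consequently, the second application of $\Cb^0$ falls into the ``$n\ge0$" case of the proof of Proposition~\ref{prop:CbRational}, yielding a tail equal to $\wt{\BN}(\oRes)$ up to overall grading shift, which is precisely $\vartheta$-rationality.

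The main obstacle is the bookkeeping required to verify that the $\tau^2$ correction does indeed push the intermediate $m$ to be nonnegative on the nose, regardless of the sign of $n$; this is where Proposition~\ref{prop:detCb} plays its essential role, by constraining the intermediate framing so that the relevant cases of the unknot-cabling computation line up. Once that is confirmed, the corollary follows immediately from the chain of reductions above.
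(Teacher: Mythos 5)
Your proof follows exactly the route the paper has in mind: Proposition~\ref{prop:detCb} disposes of Seifert framing, and $\vartheta$-rationality is extracted from the unknot cabling model in \cref{subsec:unknot} via the $\tau^2$ correction relating $\Cb$ to $\Cb^0$. The paper itself compresses this to the single line ``by considering how $\Cb^0(\wt{\BN}(T))$ differs from $\Cb(\wt{\BN}(T))$ on the tail,'' so your write-up is a faithful expansion of the intended argument.

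One small but worth noting point: the ``main obstacle'' you flag --- verifying that the intermediate slope index $m$ is nonnegative after the $\tau^2$ correction --- is not genuinely open. It is already implicit in the proof of Proposition~\ref{prop:CbRational}: the assertion there that $\Cb(\Cb(T))$ has rational tail when $n<0$ is precisely the statement that the pulled-tight slope after one application falls into the $\geq 0$ regime, so you can simply cite that proposition's proof rather than redo a case analysis. Phrased this way, the two cases ($n\geq 0$, $n<0$) both terminate after two applications of $\Cb^0$, with Proposition~\ref{prop:detCb} guaranteeing that the resulting framing is Seifert. So your plan is correct; the only improvement is to recognize that the bookkeeping you defer is already contained in the statements you are invoking.
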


One is tempted to ask whether it is possible to construct an infinite family of $\vartheta$-rational knots using this procedure. The na\"ive idea that cabling may preserve $\vartheta$-rationality is indeed too na\"ive, because cabling does not commute with connected sum, but it is amusing to think of less na\"ive construction.

\bibliographystyle{alphaurl}
\bibliography{Candid}  

\end{document}